\documentclass[a4paper,reqno]{amsart}

\usepackage{amsthm}
\usepackage{amsmath}
\usepackage{amssymb}
\usepackage{centernot}   
\usepackage{graphicx}        
\usepackage{multicol}        
\usepackage[all,ps]{xy}
\usepackage{layout}
\usepackage{booktabs}


\newcommand{\CC}{{\mathbb C}}

\newcommand{\cC}{{\mathcal C}}

\newcommand{\cE}{{\mathcal E}}
\newcommand{\cF}{{\mathcal F}}

\newcommand{\cI}{{\mathcal I}}

\newcommand{\cM}{{\mathcal M}}

\newcommand{\cO}{{\mathcal O}}

\newcommand{\cV}{{\mathcal V}} 
\newcommand{\cW}{{\mathcal W}}
\newcommand{\cX}{{\mathcal X}} 

\newcommand{\cZ}{{\mathcal Z}}

\newcommand{\dra}{\dashrightarrow}

\newcommand{\es}{\emptyset}
\newcommand{\Ext}{\text{Ext}}

\newcommand{\gF}{\mathfrak{F}}

\newcommand{\gM}{\mathfrak{M}}

\newcommand{\gz}{\mathfrak{z}}

\newcommand{\hra}{\hookrightarrow}
\newcommand{\la}{\langle}

\newcommand{\lra}{\longrightarrow}

\newcommand{\n}{\noindent}
\newcommand{\NN}{{\mathbb N}}
\newcommand{\ov}{\overline}
\newcommand{\PP}{{\mathbb P}}
\newcommand{\QQ}{{\mathbb Q}}
\newcommand{\ra}{\rangle}
\newcommand{\RR}{{\mathbb R}}

\newcommand{\wh}{\widehat}
\newcommand{\wt}{\widetilde}
\newcommand{\ZZ}{{\mathbb Z}}
\newcommand{\Gr}{\mathrm{Gr}}

\addtolength{\textwidth}{3cm}
\addtolength{\textheight}{3.5cm}
\setlength{\marginparwidth}{0in}
\setlength{\voffset}{-1in}
\setlength{\hoffset}{-1in}
\setlength{\oddsidemargin}{70pt}
\setlength{\evensidemargin}{70pt}
\linespread{1.1}

\bibliographystyle{amsplain}

\theoremstyle{plain}

\newtheorem{thm}{Theorem}[section]

\newtheorem{clm}[thm]{Claim}

\newtheorem{cnj}[thm]{Conjecture}
\newtheorem{crl}[thm]{Corollary}
\newtheorem{hyp}[thm]{Hypothesis}
\newtheorem{lmm}[thm]{Lemma}
\newtheorem{prp}[thm]{Proposition}
\newtheorem{prp-dfn}[thm]{Proposition-Definition}

\theoremstyle{definition}

\newtheorem{dfn}[thm]{Definition}

\theoremstyle{remark}

\newtheorem{expl}[thm]{Example}
\newtheorem{qst}[thm]{Question}
\newtheorem{rmk}[thm]{Remark}


\DeclareMathOperator{\Amp}{Amp}

\DeclareMathOperator{\ch}{ch}

\DeclareMathOperator{\gr}{gr}

\DeclareMathOperator{\Hom}{Hom}

\DeclareMathOperator{\im}{Im}

\DeclareMathOperator{\Pic}{Pic}

\DeclareMathOperator{\rk}{rk}

\DeclareMathOperator{\td}{Td}


\usepackage{ifthen}
\usepackage{rotating}
\usepackage{supertabular}
\newcommand{\cit}[1]{{\rm \textbf{#1}}}
\newcommand{\Ref}[2]{\cit{%
\ifthenelse{\equal{#1}{thm}}{Theorem}{}%
\ifthenelse{\equal{#1}{ass}}{Assumption}{}%
\ifthenelse{\equal{#1}{chp}}{Chapter}{}%
\ifthenelse{\equal{#1}{prp}}{Proposition}{}%
\ifthenelse{\equal{#1}{lmm}}{Lemma}{}%
\ifthenelse{\equal{#1}{cnj}}{Conjecture}{}%
\ifthenelse{\equal{#1}{crl}}{Corollary}{}%
\ifthenelse{\equal{#1}{dfn}}{Definition}{}%
\ifthenelse{\equal{#1}{expl}}{Example}{}%
\ifthenelse{\equal{#1}{hyp}}{Hypothesis}{}%
\ifthenelse{\equal{#1}{rmk}}{Remark}{}%
\ifthenelse{\equal{#1}{clm}}{Claim}{}%
\ifthenelse{\equal{#1}{exe}}{Exercise}{}%
\ifthenelse{\equal{#1}{qst}}{Question}{}%
\ifthenelse{\equal{#1}{sec}}{Section}{}%
\ifthenelse{\equal{#1}{subsec}}{Subsection}{}%
\ifthenelse{\equal{#1}{univ}}{Universal Property}{}%
\ifthenelse{\equal{#1}{trm}}{Terminology}{}%
\ifthenelse{\equal{#1}{tbl}}{Table}{}%
\  \ref{#1:#2}%
}}


\usepackage{multirow}


\setcounter{section}{-1}
 \makeindex
 \begin{document}
 \title{Moduli of sheaves  and the Chow group of $K3$ surfaces}
 \author{Kieran G. O'Grady\\\\
\lq\lq Sapienza\rq\rq Universit\`a di Roma}
\date{May 22 2012}
\thanks{Supported by
 PRIN 2010}
  \maketitle
 \tableofcontents
 \section{Introduction}\label{sec:prologo}
 \setcounter{equation}{0}
Let $X$ be a projective complex $K3$ surface. Let $CH_n(X)$ be the Chow group of dimension-$n$ cycles on $X$ modulo rational equivalence. Beauville and Voisin~\cite{beauvoisin} singled out a class $c_X\in CH_0(X)$ of degree $1$: it is represented by any point lying on an arbitrary  rational  curve (an irreducible curve whose normalization is rational). The class $c_X$ has the following remarkable property.  
\begin{equation}\label{interseco}
\text{Let $D_1,D_2\in CH_1(X)$: then $D_1\cdot D_2\in\ZZ c_X$.}
\end{equation}
Moreover $c_2(X)=24 c_X$. (Conjecturally the Chow ring of Hyperk\"ahler varieties has similar properties, see~\cite{beauville,voisin2}.)  In particular one has the {\it Beauville-Voisin ring} $CH^0(X)\oplus CH^1(X)\oplus \ZZ c_X$. 
 Huybrechts~\cite{huybrechts} proved that if $E$ is a spherical object in the bounded derived category of $X$ then the Chern character of $E$ belongs to the Beauville-Voisin ring provided $\Pic(X)$ has rank at least $2$ or $c_1(E)\equiv \pm 1\pmod{\rk(E)}$ in case $\Pic(X)\cong\ZZ$. A rigid simple vector-bundle on $X$ is a particular case of spherical object. One may summarize Huybrechts' main result  
 as follows: if  $F_1$, $F_2$ are rigid vector-bundles on $X$  (the additional hypotheses mentioned above are  in force)    then $c_2(F_1)=c_2(F_1)+a c_X$ where $a:=(\deg c_2(F_1)-\deg c_2(F_1))$. We believe that the following more general statement (with no additional hypotheses) holds. Let $\gM^{\rm st}_1$ and $\gM^{\rm st}_2$ be moduli spaces of stable pure sheaves on $X$ (with fixed cohomological Chern characters)  and suppose that $\dim\gM^{st}_1=\dim\gM^{st}_2$:  then the   subset of $CH_0(X)$ whose elements are $c_2(F_1)$ where $[F_1]\in\ov{\cM}^{\rm st}_1$ (the closure of $\gM^{\rm st}_1$ in the moduli space of semistable sheaves) is  equal to the  subset of $CH_0(X)$ whose elements are   $c_2(F_2)+a c_X$ where $[F_2]\in\ov{\cM}^{\rm st}_2$ and $a:=(\deg c_2(F_1)-\deg c_2(F_2))$ (notice that $a$ is independent of $F_1$ and $F_2$). We will prove that the above statement holds under some additional assumptions.
Before formulating  our main result 
we will define certain subsets of $CH_0(X)$. 
\begin{dfn}
Let $S_g(X)\subset CH_0(X)$ be the set of classes $[Z]+a c_X$ where $Z=p_1+\ldots+p_g$ is an  effective $0$-cycle of degree $g$ and $a\in\ZZ$. 
\end{dfn}
Notice that $S_0(X)=\ZZ c_X$. 
\begin{clm}\label{clm:perchegi}
Let $C$ be an irreducible smooth projective curve of genus $g$ and $f\colon C\to X$ be a non-constant map. Then $f_{*}CH_0(C)\subset S_g(X)$.
\end{clm}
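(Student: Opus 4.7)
The plan is to reduce everything to finding one base point $q_0 \in C$ whose image in $X$ satisfies $[f(q_0)] = c_X$; granted this, the statement will follow from Riemann-Roch on $C$ and the fact that pushforward of $0$-cycles preserves rational equivalence.

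\textbf{Step 1: producing $q_0$.} If the image curve $f(C)\subset X$ is rational in the sense of the introduction (its normalization is $\PP^1$), then any $q_0\in C$ works, since then $[f(q_0)]=c_X$ by Beauville-Voisin. Otherwise I need a rational curve $R\subset X$ distinct from $f(C)$ that meets $f(C)$. Fixing an ample class $H$ on $X$, the Bogomolov-Mumford theorem produces an integer $n\geq 1$ and an irreducible rational curve $R\in|nH|$. Since $R$ and $f(C)$ are distinct irreducible curves on the smooth projective surface $X$ with positive intersection number $R\cdot f_{*}[C]=n(H\cdot f_{*}[C])>0$, they share at least one point. I pick $q_0\in C$ with $f(q_0)\in R$, and Beauville-Voisin forces $[f(q_0)]=c_X$.

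\textbf{Step 2: Riemann-Roch and pushforward.} Let $\alpha\in CH_0(C)=\Pic(C)$ have degree $d$. The class $\alpha-(d-g)[q_0]$ has degree $g$, so Riemann-Roch on $C$ gives $h^0\geq 1$, and hence the class is represented by some effective divisor $Z=p_1+\cdots+p_g$ of degree $g$ on $C$. Thus $\alpha\sim Z+(d-g)q_0$ in $CH_0(C)$, and applying $f_{*}$ yields
\[
f_{*}\alpha \;=\; [f_{*}Z] + (d-g)[f(q_0)] \;=\; [f(p_1)+\cdots+f(p_g)] + (d-g)\,c_X,
\]
which is manifestly an element of $S_g(X)$.

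\textbf{Main difficulty.} The only nontrivial input is Step 1: the existence of a rational curve on $X$ meeting the image $f(C)$, when $f(C)$ is not itself rational. This relies on Bogomolov-Mumford together with the elementary fact that distinct irreducible curves on a smooth projective surface with positive intersection number share a common point. Once $q_0$ is in hand, the argument collapses to a one-line application of Riemann-Roch on $C$ plus the functoriality of $f_{*}$ on rational equivalence classes.
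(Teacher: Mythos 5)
Your overall strategy is exactly the paper's: produce one point $q_0\in C$ with $f_{*}[q_0]=c_X$ by intersecting $f(C)$ with rational curves coming from an ample linear system, then use Riemann--Roch on $C$ to write any class as an effective degree-$g$ cycle plus a multiple of $[q_0]$, and push forward. Step 2 is correct and is verbatim the paper's second half.

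The one genuine issue is in Step 1, Case B: the Bogomolov--Mumford theorem (the result in \cite{mori-mukai} that the paper invokes) does \emph{not} produce an irreducible rational curve $R\in|nH|$. What it gives is a divisor $D\in|H|$ whose irreducible \emph{components} are rational curves; $D$ itself is in general reducible, and no single component need be proportional to $H$, so no single component need have positive intersection with $f(C)$. The existence of an irreducible rational curve in some $|nH|$ on an \emph{arbitrary} projective $K3$ is a much stronger assertion, not available from the classical theorem (and certainly not at the time of this paper). Fortunately your argument survives with a small repair that also removes your case split: take $D\in|H|$ with all components rational; since $D$ is ample and $f$ is non-constant, $f(C)\cap D\not=\es$; any point $f(q_0)$ of this intersection lies on \emph{some} irreducible component of $D$, which is a rational curve, so $[f(q_0)]=c_X$ by Beauville--Voisin --- this works whether or not $f(C)$ is itself rational, and whether or not $f(C)$ happens to be a component of $D$. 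With this correction your proof coincides with the paper's.
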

\begin{proof}
There exists $p\in C$ such that $f_{*}[p]=c_X$. In fact let $H$ be a primitive ample divisor on $X$, by~\cite{mori-mukai} there exists $D\in  |H|$ whose irreducible components are rational curves. Since $f$ is not constant and $D$ is ample $f(C)\cap D\not=\es$: if $p\in f^{-1}( D)$ then  $i_{*}[p]=c_X$. Now let $\gz\in CH_0(C)$. By Riemann-Roch there exists an effective cycle $p_1+\ldots+ p_g$ on $C$ such that $\gz=[p_1+\ldots+p_g]+(\deg\gz -g) p$: thus $f_{*}\gz=([f(p_1)+\ldots+ f(p_g)]+(\deg\gz -g) c_X)\in S_g(X)$. 
\end{proof}
Multiplication by $\ZZ$ maps $S_g(X)$ to itself - see~\Ref{crl}{anakin}. Thus we may say that  $S_g(X)$  is a cone; on the other hand $S_g(X)$ is a subgroup of $CH_0(X)$ only if $g=0$.  We have a  filtration
\begin{equation}\label{filtrochow}
S_0(X)\subset S_1(X)\subset \ldots \subset S_g(X)\subset S_{g+1}(X)\subset\ldots \subset CH_0(X).
\end{equation}
In fact let $\gz=([p_1+\ldots+p_g]+a c_X)\in S_g(X)$. Let $p_{g+1}\in X$ be a point lying on a rational curve: then $[p_{g+1}]=c_X$ and hence $\gz=([p_1+\ldots+p_g+p_{g+1}]+(a-1) c_X)\in S_{g+1}(X)$. This proves~\eqref{filtrochow}. We also have that
\begin{equation}\label{esaurisce}
\bigcup_{g=0}^{\infty} S_g(X)=CH_0(X).
\end{equation}
In fact let $\gz\in CH_0(X)$. There exist a smooth curve $\iota\colon C_0\hra X$ of genus $g$ and a cycle $D_0\in Z^1(C_0)$ such $\gz=[\iota_{*}D_0]$. By~\Ref{clm}{perchegi} we get that $\gz\in S_g(X)$; this proves~\eqref{esaurisce}. Next we recall 
that the {\it Mukai pairing} on $H^{\bullet}(X;\ZZ)$ is the symmetric bilinear form defined by
\begin{equation}\label{mukprod}
\la \alpha,\beta\ra :=-\int_X \alpha^{\vee}\cup \beta,\qquad
 (\alpha_0+\alpha_2+\alpha_4)^{\vee}:= \alpha_0-\alpha_2+\alpha_4,\quad \alpha_p\in H^p(X;\ZZ).
\end{equation}
Let
\begin{equation}\label{vettomuk}
v=(r,\ell,s)\in H^{\bullet}(X;\ZZ).
\end{equation}
(We identify $H^4(X;\ZZ)$ with $\ZZ$ via the orientation class.) 
\begin{dfn}\label{dfn:vettmuk}
A \emph{Mukai vector} (for $X$) is a $v$ as in~\eqref{vettomuk} such that the following hold:
  \begin{enumerate}
\item[(1)]
$r\ge 0$, 
\item[(2)]
$\ell\in H^{1,1}_{\ZZ}(X)$,
\item[(3)]
if $r=0$ then $\ell$ is effective.   
\end{enumerate}
\end{dfn}
 Given a coherent sheaf $F$ on $X$ the {\it Mukai vector of $F$} is
\begin{equation}\label{vettoremuk}
v(F):=(\ch^{\rm hom}_0(F)+\ch^{\rm hom}_1(F)+\ch^{\rm hom}_2(F))\cup\sqrt{\td_X}
\end{equation}
where $c_p^{\rm hom}(F)\in H^{2p}(V;\ZZ)$ is  the  topological $p$-th Chern class of $F$. 
  Suppose that $v\in H^{\bullet}(X;\ZZ) $ is a Mukai vector and $H$ is an ample divisor on $X$. Let $\gM_v(X,H)$ be the   moduli space of $S$-equivalence classes of pure $H$-semistable sheaves on $X$ with $v(F)=v$, see~\cite{huy-lehn,simp}. Thus  $\gM_v(X,H)$  is a projective complex scheme. Let $\gM_v(X,H)^{\rm st}$ be the open subscheme of $\gM_v(X,H)$ 
  parametrizing isomorphism classes of pure $H$-stable sheaves. Suppose that  $\gM_v(X,H)^{\rm st}$ is  not empty: then  it is  smooth of   pure dimension given by 
\begin{equation}
\dim\gM_v(X,H)^{\rm st}=2+v^2=2d(v). 
\end{equation}
(We let $v^2:=\la v,v\ra$.)
Notice that   $d(v)$ is an integer because the Mukai pairing is even. 
We let $\ov{\gM}_v(X,H)^{\rm st}$ be the closure of $\gM_v(X,H)^{\rm st}$ in $\gM_v(X,H)$. 
Let
\begin{equation}
c_2(v):=  r+\frac{\ell \cdot\ell}{2}-s. 
\end{equation}
Thus $c_2(v)$ is the degree of $c_2(F)$ where  $F$ is a coherent sheaf such that $v(F)=v$. 
\begin{cnj}\label{cnj:solodim}
Let $X$ be a a projective complex $K3$ surface and $H$ an ample divisor on $X$. Let $v\in H^{\bullet}(X;\ZZ)$ be a Mukai vector. Suppose that  $\gM_v(X,H)^{\rm st}$ is  not empty. Then
\begin{equation}\label{anvedi}
\{c_2(F) \mid [F]\in\ov{\gM}_v(X,H)^{\rm st} \}=\{\gz\in S_{d(v)}(X) \mid \deg\gz=c_2(v)\}.
\end{equation}
(Here $\deg\colon CH_0(X)\to\ZZ$ is the degree homomorphism.)
\end{cnj}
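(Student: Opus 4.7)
The plan is to establish the two inclusions in \eqref{anvedi} separately, starting from base cases where both sides can be described explicitly, and propagating the result by natural operations on moduli spaces.

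\textbf{Base cases.} First I would verify the conjecture in two settings: $v=(1,0,1-n)$ (the Hilbert scheme case), and $d(v)=0$ (the spherical case). In the former, $\gM_v(X,H)=\gM_v(X,H)^{\rm st}=X^{[n]}$ and $c_2(I_Z)=[Z]$ for $Z\in X^{[n]}$; as $Z$ varies, $[Z]$ ranges over all effective $0$-cycles of degree $n$, which matches $\{\gz\in S_n(X)\mid\deg\gz=n\}$ since the degree condition forces the coefficient $a$ of $c_X$ in $\gz=[p_1+\ldots+p_n]+a c_X$ to vanish. The case $d(v)=0$ is essentially Huybrechts' theorem on spherical objects, giving $c_2(F)\in\ZZ c_X=S_0(X)$ under the appropriate Picard-rank hypothesis.

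\textbf{Inclusion $\supseteq$.} For general $v$ and $\gz=[p_1+\ldots+p_{d(v)}]+a c_X\in S_{d(v)}(X)$ with $a=c_2(v)-d(v)$, I would construct $[F]\in\ov{\gM}_v(X,H)^{\rm st}$ with $c_2(F)=\gz$ as follows. When $r=0$, so that $v=(0,\ell,s)$ and (taking $\ell$ irreducible) $d(v)=\ell^2/2+1=g(C)$ for $C$ a smooth curve in class $\ell$, take $F=\iota_{*}L$ for $\iota:C\hra X$ of genus $d(v)$ and $L\in\Pic(C)$ a line bundle whose associated divisor realises $p_1+\ldots+p_{d(v)}$ on $X$ up to a rational-curve correction; a direct Riemann--Roch check shows one can always arrange the numerical match between $\deg L$ and the required coefficient of $c_X$. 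When $r>0$, take a reference stable sheaf $F_0$ with $v(F_0)=v$ and perform elementary modifications of $F_0$ along a chosen curve and a divisor on it to shift $c_2(F_0)$ to the prescribed $\gz$, preserving stability; alternatively, apply a suitable Fourier--Mukai transform to reduce to the Hilbert scheme case on $X$ or on a Fourier--Mukai partner.

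\textbf{Inclusion $\subseteq$.} Given $[F]\in\ov{\gM}_v(X,H)^{\rm st}$, one must show $c_2(F)\in S_{d(v)}(X)$, the degree condition being automatic. In rank $0$, $F$ is supported on a curve whose total geometric genus can be bounded by $d(v)$ via adjunction and the formula $v^2=\ell^2$, so \Ref{clm}{perchegi} applies directly. In positive rank, the natural tool is a Cayley--Bogomolov construction: after twisting by a sufficiently positive power of $H$, $r-1$ general sections of $F(mH)$ yield a short exact sequence $0\to\cO_X^{r-1}\to F(mH)\to\iota_{*}L\to 0$ with $\iota:C\hra X$ a smooth curve and $L\in\Pic(C)$. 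Reading off $c_2$ from this sequence, and using that intersections of divisors on $X$ lie in $\ZZ c_X$ by Beauville--Voisin, expresses $c_2(F)$ modulo $\ZZ c_X$ as $\iota_{*}[D]$ for a divisor $D$ on $C$. By \Ref{clm}{perchegi} this gives $c_2(F)\in S_{g(C)}(X)$, and the filtration~\eqref{filtrochow} reduces the conjecture to the inequality $g(C)\leq d(v)$.

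\textbf{Main obstacle.} The hardest step is this last inequality $g(C)\leq d(v)$ in the positive-rank case. The naive Cayley--Bogomolov estimate yields a degeneracy curve of genus growing linearly in $m$, far exceeding $d(v)$. To close the gap one would either engineer the sections so that the degeneracy curve has minimal genus, or deform $F$ within its moduli component to a sheaf admitting a subsheaf whose cosupport is a lower-genus curve---both requiring delicate positivity and stability estimates, and perhaps inducting on $d(v)$ via wall-crossing between moduli spaces. I expect this genus bound to be the principal obstruction, and its resolution in only restricted ranges of $v$ to account for the ``additional assumptions'' on \Ref{cnj}{solodim} that the author announces in the introduction.
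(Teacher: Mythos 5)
First, a framing remark: the statement you set out to prove is \Ref{cnj}{solodim}, which the paper states as a \emph{conjecture} and proves only under the extra hypotheses of \Ref{thm}{valese}; so no complete blind proof was to be expected, and your sketch should be judged against the paper's proofs of those cases. Measured that way, your plan has two genuine gaps. The first is exactly the point you flag as the ``main obstacle'', and it is where the paper's key idea enters and your approach stops. Fixing $r-1$ general sections of a large twist $F(mH)$ and trying to bound the genus of the resulting degeneracy curve by $d(v)$ is hopeless: that curve moves in $|\det F(mH)|$, whose genus grows quadratically in $m$ while $d(v)$ stays fixed, and no choice of sections will repair this. The paper never bounds the genus of a single degeneracy curve. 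Instead, after reducing (by \Ref{prp}{tensinv} and twisting) to the case where $L=\det F$ is ample and $s\ge 0$, so that $h^0(F)\ge r+s$, it varies the choice over \emph{all} $r$-dimensional subspaces $U\subset H^0(F)$, i.e.\ it studies the degeneracy-locus map $\lambda_F\colon U\mapsto V(\det\varphi^U_F)$ from $\Gr(r,H^0(F))$ to $|L|$. Since $\lambda_F$ pulls the hyperplane class back to the Pl\"ucker class (\Ref{lmm}{tiroindi}), when it is defined on the whole (compact) Grassmannian it is finite, so its image has dimension at least $rs=\dim|L|-d(v)$; on the other hand the locus $\Sigma_{d(v)}(X,L)\subset|L|$ swept out by limits of integral curves of geometric genus at most $d(v)$ has dimension at least $d(v)$ by the Severi-variety results of Chen and Bogomolov--Hassett--Tschinkel, see \eqref{almenogizero}. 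Hence the two must intersect, and for a subspace $U_0$ whose degeneracy curve lies in $\Sigma_{d(v)}(X,L)$, Whitney's formula together with \Ref{clm}{spingo} and \Ref{prp}{anakin} gives $c_2(F)\in S_{d(v)}(X)$. So your genus obstruction is dissolved by a dimension count inside $|L|$, not by positivity estimates. What genuinely remains hard --- and is the actual source of the hypotheses in \Ref{thm}{valese} --- is the possible indeterminacy of $\lambda_F$: if some $U$ has $\det\varphi^U_F\equiv 0$, the source of $\lambda_F$ is no longer proper, finiteness fails (positive-dimensional non-proper fibers are possible), and the lower bound on $\dim\im\lambda_F$ is lost. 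The paper handles this via \Ref{prp}{verdone} and degeneration to generic polarized $K3$s when $\ell$ is primitive (\Ref{prp}{picari}), via \Ref{lmm}{cambiopol} when $\rho(X)\ge 2$ and $r$ is coprime to the divisibility of $\ell$ (\Ref{prp}{dirunio}), and via an induction on the divisibility of $\ell$ in rank two (\Ref{prp}{rangodue}).

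The second gap is your inclusion $\supseteq$: ``elementary modifications preserving stability'' or ``a suitable Fourier--Mukai transform'' are not arguments, and preserving stability under elementary modification is precisely the kind of claim that fails without care. The paper shows that nothing needs to be constructed: by \Ref{prp}{bastacont}, the inclusion $\subseteq$ for stable sheaves already implies equality. The mechanism is a structural input absent from your plan, namely the Mukai--Tyurin symplectic form. The set $\mathbf{W}$ of classes $c_2(F)$ is the image of a countable union of closed subsets of $X^{[d(v)]}$ under $Z\mapsto [Z]+(c_2(v)-d(v))c_X$, hence of a single closed subset $V$; if $V$ were a proper subvariety of $X^{[d(v)]}$, the compatibility $p^{*}\sigma_v=-4\pi^2 q^{*}\sigma_{d(v)}$ between the symplectic form on $\gM_v(X,H)^{\rm st}$ and the trace form on $X^{[d(v)]}$ would force $\sigma_v$ to be everywhere degenerate, a contradiction since $\dim\gM_v(X,H)^{\rm st}=2d(v)$. (The same countability-and-specialization technique, \Ref{clm}{specializzo}, is what lets the paper pass from generic polarized $K3$s to arbitrary ones --- another tool your sketch would need.) Finally, a small point: your base case $d(v)=0$ via Huybrechts is not unconditional, since his theorem requires $\rho(X)\ge 2$ or a congruence condition on $c_1$, so it cannot anchor an unconditional argument either.
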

\begin{rmk}
Let $[F]\in\gM_v(X,H)$ with $F$ not $H$-stable i.e.~properly $H$-semistable. The same point of $\gM_v(X,H)$ is represented by any $H$-semistable pure sheaf $G$ which is $S$-equivalent to $F$ i.e.~such that  $\gr^{JH}(F)\cong \gr^{JH}(G)$ where $\gr^{JH}(F)$, $\gr^{JH}(G)$ are the  the direct-sums of the successive quotients of  Jordan-Holder  filtrations of $F$ and $G$. It follows that although $F$, $G$ may not be isomorphic  the Chern classes $c_2(F)$ and $c_2(G)$ are equal. This shows that the left-hand side of~\eqref{anvedi} is well-defined. 
\end{rmk}
The following is the main result of the present paper. 
\begin{thm}\label{thm:valese}
Let $X$ be  a projective complex $K3$ surface and  $v=(r,\ell,s)$ be  a Mukai vector. Let $H$ be an ample  divisor on $X$. Suppose that $\gM_v(X,H)^{\rm st}$ is not empty and that one of the following holds:
\begin{enumerate}
\item[(1)]
$\ell=c_1^{\rm hom}(\cO_X(H))$, $\ell$ is primitive  and  $s\ge 0$. 
\item[(2)]
The Picard number of $X$ is at least $2$, $r$ is  coprime to the divisibility of $\ell$ and $H$ is $v$-generic 
(see~\Ref{subsec}{invarianza} for the relevant definition). 
\item[(3)]
$r\le 2$ and moreover $H$ is $v$-generic if $r=2$.
\end{enumerate}
Then~\eqref{anvedi} holds. 
\end{thm}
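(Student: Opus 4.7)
The plan is to establish both inclusions in \eqref{anvedi} by reducing each of (1)--(3) to the base case $r=0$, where \Ref{clm}{perchegi} applies directly.

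\smallskip
\noindent\emph{Base case $r=0$.} Here $v_0=(0,\ell_0,s_0)$ with $\ell_0$ effective, and any pure $H$-stable $F$ with $v(F)=v_0$ is supported on a curve $C\in|\ell_0|$ of arithmetic genus $g=d(v_0)$ (from $v_0^2=\ell_0\cdot\ell_0=2g-2$). For the inclusion $\subset$, on the dense open locus parametrising pairs $(C,L)$ with $C$ smooth and $F\cong\iota_*L$ for a line bundle $L=\cO_C(D)$, the Whitney formula applied to $0\to\iota_*\cO_C\to\iota_*L\to\iota_*\cO_D\to 0$ yields $c_2(F)=[C]^2-\iota_*[D]$ in $CH_0(X)$, which lies in $\iota_*CH_0(C)\subset S_g(X)=S_{d(v_0)}(X)$ by \Ref{clm}{perchegi}. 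For non-generic $[F]\in\ov\gM_{v_0}^{\rm st}$ (singular support, non-invertible $F$, or properly semistable limits) the same conclusion follows by flat-family specialisation together with the inclusions of the filtration \eqref{filtrochow}. For the reverse inclusion $\supset$, given $\gz=[p_1+\ldots+p_g]+ac_X$ of degree $c_2(v_0)$, I would use $\dim|\ell_0|=g$ to find $C\in|\ell_0|$ through $p_1,\ldots,p_g$ (exists by the dimension count) and then, exploiting the freedom of the $ac_X$ shift together with Riemann--Roch on $C$, produce an effective divisor $D$ of degree $s_0+g-1$ solving $[C]^2-\iota_*[D]=\gz$; then $F=\iota_*\cO_C(D)$ lies in $\ov\gM_{v_0}^{\rm st}$.

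\smallskip
\noindent\emph{Reduction to $r=0$.} In case (1), $\ell=c_1^{\rm hom}(\cO_X(H))$ is primitive and $s\ge 0$, so for general $F\in\gM_v^{\rm st}$ the Euler characteristic $\chi(F)=r+s$ is positive and $F$ admits a nonzero section; the resulting exact sequence $0\to\cO_X\to F\to F'\to 0$ has $v(F')=v-(1,0,1)$ and $c_2(F)=c_2(F')$ by Whitney. Iterating reaches a Mukai vector of the form $(0,[H],s-r)$; if $s<r$, one first dualises. In case (2), the hypotheses (Picard number $\ge 2$, $\gcd(r,\mathrm{div}(\ell))=1$, and $v$-genericity of $H$) are precisely those under which a Fourier--Mukai equivalence $\gM_v^{\rm st}\cong\gM_{v'}^{\rm st}$ with $r(v')=0$ can be constructed; the kernel has Chern classes in a Beauville--Voisin-type subring of $CH^\bullet(X\times Y)$, so the induced map on $c_2$ is a translation by an element of $\ZZ c_X$. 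In case (3), the smallness $r\le 2$ permits a direct treatment: for $r=1$ the moduli space is a twist of the Hilbert scheme $X^{[d(v)]}$ and $c_2(F)=[Z]\in S_{d(v)}(X)$ tautologically; for $r=2$ with $v$-generic $H$, a single wall-crossing identifies $\gM_v^{\rm st}$ with a moduli space of rank $\le 1$, with controlled effect on $c_2$.

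\smallskip
\noindent\emph{Main obstacle.} The hardest point is case (2): the Fourier--Mukai reduction must be implemented in $CH_0(X)$, not merely in cohomology, which requires a detailed analysis showing that the Chern classes of the kernel lie in a Beauville--Voisin-type subring of $CH^\bullet(X\times Y)$. This uses the full force of \eqref{interseco} on the second K3 factor. A subsidiary difficulty is handling properly $H$-semistable sheaves in the closure $\ov\gM_v^{\rm st}$: for a Jordan--Hölder filtration with factors $F_i$ of Mukai vectors $v_i$, one must verify that $c_2(F)=\sum c_2(F_i)$ in $CH_0(X)$ and combine the memberships $c_2(F_i)\in S_{d(v_i)}(X)$ into $c_2(F)\in S_{d(v)}(X)$ using \Ref{crl}{anakin} and the nested filtration \eqref{filtrochow}.
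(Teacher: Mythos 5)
Your reduction in case (1) contains the decisive gap. From $0\to\cO_X\to F\to F'\to 0$ Whitney does give $c_2(F)=c_2(F')$, but the relevant index \emph{grows} along this reduction: $v(F')=(r-1,\ell,s-1)$, so $d(v(F'))=d(v)+r+s-1>d(v)$. Iterating down to rank $0$ lands you at $(0,\ell,s-r)$, whose sheaves are supported on curves in $|L|$ of \emph{arithmetic} genus $g(L)=1+\frac{1}{2}\ell\cdot\ell$, so this argument only yields $c_2(F)\in S_{g(L)}(X)$, which is strictly weaker than $c_2(F)\in S_{d(v)}(X)$ whenever $rs>0$ (note $d(v)=g(L)-rs$). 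The idea you are missing is the heart of the paper's proof: take a full $r$-dimensional subspace $U\subset H^0(F)$ at once, giving $0\to U\otimes\cO_X\to F\to\iota_{*}\xi\to 0$ with $\xi$ supported on the degeneracy curve $C=V(\det\varphi_F^U)\in|L|$, and force $C$ to have small \emph{geometric} genus by a dimension count. The degeneracy-locus map $\lambda_F\colon\Gr(r,H^0(F))_{*}\to|L|$ pulls back the hyperplane class to the Pl\"ucker class (\Ref{lmm}{tiroindi}), so when it is everywhere defined its image has dimension $\ge rs=\dim|L|-d(v)$; hence the image meets $\Sigma_{d(v)}(X,L)$, whose dimension is $\ge d(v)$ by the Chen/Bogomolov--Hassett--Tschinkel results on Severi varieties (\Ref{thm}{severiesiste}, \eqref{almenogizero}). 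Only curves in $\Sigma_{d(v)}(X,L)$ push $CH_0$ into $S_{d(v)}(X)$; a general member of $|L|$ does not.

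The other two cases also rest on unavailable tools. For case (2) you invoke a Fourier--Mukai equivalence whose kernel has Chern classes in a Beauville--Voisin-type subring of $CH^{\bullet}(X\times Y)$, so that the induced map on $c_2$ respects the filtration; but whether correspondences respect $S_{\bullet}$ is precisely an open question, raised explicitly in the paper's last section, not a known result. The paper instead proves an arithmetic lemma (\Ref{lmm}{cambiopol}): inside the open $v$-chamber of $H$ one can choose an integral $h$ and infinitely many $y$ with $\ell+ryh$ primitive, and then tensoring by $\cO_X(yH)$ --- which changes $c_2$ only by intersections of divisors, hence by $\ZZ c_X$ (\Ref{prp}{tensinv}) --- reduces case (2) to case (1). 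For case (3), there is no ``single wall-crossing identifying $\gM_v^{\rm st}$ with a moduli space of rank $\le 1$'': changing the polarization never changes the rank. The paper's rank-two proof is an induction on the divisibility $m$ of $\ell$, combining the Kodaira-dimension-zero property of these moduli spaces (\Ref{prp}{kodzero}), a parameter count excluding sections of the generic sheaf when $2+s\le 0$ (\Ref{prp}{contopar}, \Ref{crl}{contopar}), an explicit birational model $X^{[n]}\dra\gM^{\rm st}_{(2,\ell,-1)}(X,H)$ for the boundary case (\Ref{prp}{essemeno}), and the degeneracy-locus argument again. Finally, your treatment of properly semistable sheaves and of the reverse inclusion is both insufficient and unnecessary: as stated it would require $\sum_i d(v_i)\le d(v)$ for the Jordan--H\"older factors, which needs justification, whereas the paper's \Ref{prp}{bastacont} shows that the inclusion $\{c_2(F)\mid [F]\in\gM_v(X,H)^{\rm st}\}\subset S_{d(v)}(X)$ for \emph{stable} sheaves alone already forces equality in \eqref{anvedi}, via a countability/specialization argument and the nondegeneracy of the Mukai--Tyurin symplectic form compared with the trace form on $X^{[d(v)]}$.
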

A few comments on~\Ref{thm}{valese}. Suppose that   $-2\le v^2$ and $H$ is $v$-generic: if $r>0$ is coprime to the divisibility of $\ell$ or  if $r=2$ and $v\not=(2,2\ell_0,\ell_0\cdot\ell_0)$ then  $\gM_v(X,H)^{\rm st}$ is not empty  - see~\Ref{thm}{modce} and~\Ref{prp}{kodzero}. If $r=0$  the proof that~\eqref{anvedi} holds is an easy exercise, if $r=1$ (stability is not an issue in this case)  then~\eqref{anvedi} holds by definition. Now assume that  $r\ge 2$: the starting idea in the proof  is as follows. Let $v=(r,\ell,s)$ be a Mukai vector such that the following hold: $\ell=c_1^{\rm hom}(L)$ where $L$ is ample and $s\ge 0$. Let $[F]\in \gM_v(X,H)^{\rm st}$. Then   $H^2(F)$ vanishes by stability and hence Hirzebruch-Riemann-Roch gives that $h^0(F)\ge \chi(F)=r+s$.  Applying  Hirzebruch-Riemann-Roch and Kodaira vanishing to compute $h^0(L)$ we get that
\begin{equation}\label{disegchiave}
\dim\Gr(r,H^0(F))\ge rs=\dim|L|-d(v). 
\end{equation}
Now  assume that for every  $U\in \Gr(r,H^0(F))$ the tautological map $\varphi^U_F\colon U\otimes\cO_X\to F$ is generically an isomorphism: then we have a regular map
\begin{equation*}
\begin{array}{ccl}
\Gr(r,H^0(F)) & \overset{\lambda_F}{\lra} & |L| \\
U & \mapsto & V(\det\varphi_F^U).
\end{array}
\end{equation*}
The pull-back by $\lambda_F$ of the hyperplane class on $|L|$ is linearly equivalent to the Pl\"ucker hyperplane class on $\Gr(r,H^0(F))$: it follows that $\dim(\im\lambda_F)=\dim\Gr(r,H^0(F))\ge rs$. On the other hand  there exists a closed subset $\Sigma_{d(v)}(X,L)\subset |L|$ of dimension at least $d(v)$ with the property that for every $C\in \Sigma_{d(v)}(X,L)$ the push-forward $CH_0(C)\to CH_0(X)$ has image contained in $S_{d(v)}(X)$ - this follows from~\Ref{clm}{perchegi} and known results on Severi varieties in complete linear systems on $K3$'s, see~\Ref{prp}{anakin} and~\eqref{almenogizero}. 
 By~\eqref{disegchiave} it follows  there exists $C_0\in (\im\lambda_F)\cap\Sigma_{d(v)}(X,L)$. Let $U_0\in \Gr(r,H^0(F))$ be such that $C_0=V(\det\varphi_F^{U_0})$. Applying Whitney's formula to the exact sequence $0\to U_0\otimes\cO_X\to F\to \xi\to 0$ we get that  $c_2(F)\in S_{d(v)}(X)$ if $c_2(\xi)\in S_{d(v)}(X)$: the latter  holds because of the stated property of curves (such as $C_0$) that belong to $\Sigma_{d(v)}(X,L)$. The proof sketched above - together with some extra work -  gives Items(1) and~(2) of~\Ref{thm}{valese}. 
In general (say for $\ell$ highly divisible) we will only get a rational map $\lambda_F\colon\Gr(r,H^0(F)) \dra |L| $. It might be quite difficult to resolve the indeterminacies of that map in order to determine the dimension of the image. We will show how to circumvent that problem when the rank is $2$ - that gives Item~(3)  of~\Ref{thm}{valese}.  

\vskip 2mm
\n
{\bf Notation and conventions.}
Schemes are over $\CC$. Points are closed (geometric) points unless we specify differently. By a sheaf on a scheme we always mean a {\bf coherent} sheaf. For a smooth projective variety $X$ we let $\rho(X)$ be its Picard number i.e.~the rank of the Neron-Severi group: thus $\rho(X)$ is equal to $h^{1,1}_{\QQ}(X):=\dim_{\QQ}(H^{1,1}(X)\cap H^2(X;\QQ))$.
By a $K3$ surface we always mean a (complex) projective $K3$ surface. 
Let $L$  be a line-bundle on a  $K3$ surface $X$: we let
\begin{equation}
g(L):=\chi(L)-1=\frac{1}{2}\deg(L\cdot L)+1.
\end{equation}
If $D$ is a divisor on $X$ we let $g(D):=g(\cO_{X}(D))$. Let   $C\subset X$ be an integral curve: then $g(C)$ is the arithmetic genus of $C$. 

\vskip 2mm
\n
{\bf Acknowledgment:} It is a pleasure to thank Daniel Huybrechts for stimulating conversations. Thanks to Claire Voisin for mentioning the filtration of~\Ref{sec}{fantasia}.
\section{Preliminaries}\label{sec:sicomincia}
\setcounter{equation}{0}
\subsection{Generalities on the filtration}\label{subsec:fattigen}
\setcounter{equation}{0}
  Let $X$ be a $K3$ surface. Let $C$ be an effective divisor on $X$. We view $C$ as a (purely) $1$-dimensional subscheme of $X$: let $\iota\colon C\hra X$ be the inclusion map. 
  \begin{clm}\label{clm:spingo}
Keep notation and assumptions  as above. Let $\xi$ be a sheaf on $C$. There exist $\eta\in CH_0(C)$ and $a\in \ZZ$ such that
\begin{equation}\label{clm:riemroch}
c_2(\iota_{*}\xi)=\iota_{*}\eta+a c_X.
\end{equation}
\end{clm}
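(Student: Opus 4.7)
The plan is to run a d\'evissage reducing the claim to $\xi = \cO_Z$ for integral $Z \subset C$, then verify the two resulting cases directly. Every coherent $\cO_C$-module $\xi$ admits a finite filtration
$$0 = \xi_0 \subset \xi_1 \subset \cdots \subset \xi_n = \xi$$
whose successive quotients are of the form $\cO_{Z_i}$ with $Z_i \subset C$ an integral closed subscheme---so either a closed point of $C$ or the reduced structure on an irreducible component of $C$. Since $\iota$ is a closed immersion, $\iota_{*}$ is exact and transfers this to a filtration of $\iota_{*}\xi$ on $X$ with quotients $\iota_{*}\cO_{Z_i}$.

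To stitch the pieces together, I will use the Chern-class identity $c_2(B) = c_2(A) + c_2(F) + c_1(A)\cdot c_1(F)$ for an exact sequence $0 \to A \to B \to F \to 0$ on $X$. Every sheaf appearing in the filtration is pushed forward from $C$, so its first Chern class is an integer linear combination of the divisor classes $[C'_j]$ associated to the reduced irreducible components $C'_j$ of $C$. By the Beauville--Voisin identity \eqref{interseco}, each product $[C'_i]\cdot [C'_j]$ lies in $\ZZ c_X$, hence every cross term $c_1(A)\cdot c_1(F)$ lies in $\ZZ c_X$ and is harmless for the statement. Induction on $n$ therefore reduces the claim to checking it for a single $\iota_{*}\cO_{Z_i}$.

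If $Z_i$ is a closed point $p$, then $\iota_{*}\cO_p$ is the length-one skyscraper on $X$; from $\ch(\cO_p)=[p]$ and $c_1(\cO_p)=0$ one reads $c_2(\iota_{*}\cO_p)=-[p]$, which lies in $\iota_{*}CH_0(C)$. If instead $Z_i$ is an integral component $C' \subset C$, the structure sequence
$$0 \to \cO_X(-C') \to \cO_X \to \iota_{*}\cO_{C'} \to 0$$
gives $c(\iota_{*}\cO_{C'}) = (1-[C'])^{-1}$ in $CH^{*}(X)$, so $c_2(\iota_{*}\cO_{C'}) = [C']^{2}$, which lies in $\ZZ c_X$ by \eqref{interseco}. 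In both cases the outcome has the required shape $\iota_{*}\eta + a c_X$, closing the induction.

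There is no serious obstacle here: the only nontrivial input is the Beauville--Voisin relation \eqref{interseco}, and everything else is routine intersection theory. The one point to keep straight is that all $c_1$'s arising in the argument are integer combinations of components of $[C]$, so every intersection product of $c_1$'s along the induction automatically factors through $\ZZ c_X$.
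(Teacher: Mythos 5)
Your overall strategy (reduce, via Whitney's formula and the Beauville--Voisin relation \eqref{interseco}, to structure sheaves of components plus $0$-dimensional sheaves) is sound, and your Chern-class computations for $\iota_{*}\cO_p$ and $\iota_{*}\cO_{C'}$ are correct. The problem is the d\'evissage you start from: it is false as stated. A coherent sheaf on $C$ need \emph{not} admit a filtration whose successive quotients are structure sheaves $\cO_{Z_i}$ of integral closed subschemes. Indeed, the first step of such a filtration would exhibit some $\cO_{Z_1}$ as a \emph{subsheaf} of $\xi$. Take $C$ integral and $\xi$ a line bundle on $C$ with $H^0(\xi)=0$ (e.g.\ of negative degree): a skyscraper $\cO_p$ cannot embed into the torsion-free sheaf $\xi$, and an embedding $\cO_C\hookrightarrow\xi$ would be a nonzero global section, which does not exist. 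So no such filtration exists, and your induction never gets started precisely in the case that carries the interesting content (line bundles supported on all of $C$). The correct general d\'evissage (the global version of the statement that a finite module over a Noetherian ring has a filtration with quotients $R/\gp$) produces quotients of the form $(Z_j\to X)_{*}\cI_j$ with $Z_j$ integral and $\cI_j\subset\cO_{Z_j}$ a nonzero \emph{ideal} sheaf, not the structure sheaf itself.

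The gap is fixable along your own lines: for a curve component $C'$ the quotient is $\iota_{*}\cI_{W/C'}$ with $W\subset C'$ finite, and from $0\to\cI_{W/C'}\to\cO_{C'}\to\cO_W\to 0$ one gets $c_2(\iota_{*}\cI_{W/C'})=[C']^2+[W]$, which lies in $\ZZ c_X+\iota_{*}CH_0(C)$; the cross terms in your induction are still products of divisor classes, hence in $\ZZ c_X$ by \eqref{interseco}. Alternatively---and this is what the paper does---use \eqref{interseco} first to note that one may twist $\xi$ by an ample line bundle (twisting changes $c_2(\iota_{*}\xi)$ only by products of divisor classes), assume $\xi$ globally generated, and then use a single exact sequence $0\to\oplus_i\cO_{C_i}^{r_i}\to\xi\to\zeta\to 0$ with $\zeta$ of finite length, which reduces everything to $c_2(\iota_{*}\cO_{C_i})\in\ZZ c_X$. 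Either repair works, but as written your proof rests on a false filtration lemma, so the reduction step is a genuine gap.
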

  \begin{proof}
By~\eqref{interseco} we are free to tensor $\xi$ by an arbitrary invertible sheaf on $X$. Thus we may assume that $\xi$ is globally generated and hence there exists an exact sequence
\begin{equation*}
0\lra \oplus_{i=1}^n \cO_{C_i}^{r_i}\lra \xi \lra \zeta \to 0
\end{equation*}
 where  $C_i$ for $i=1,\ldots,n$ are the irreducible components of $C$ ($r_i$ is the rank of the restriction of $\xi$ to $C_i$) and
 $ \zeta$ has $0$-dimensional support. It follows that it suffices to check that $c_2(\iota_{*}\cO_{C_i})\in\ZZ c_X$: that follows at once from~\eqref{interseco}.
\end{proof}
 \begin{clm}\label{clm:specializzo}
Let  $f\colon \cX\to T$ be a  projective family of $K3$ surfaces i.e.~$f$ is projective, flat and the fibers are $K3$ surfaces. Let $\cZ\in CH^2(\cX)$. Suppose that there is a dense open subset $U\subset T$ such that  $Z_t:=\cZ|X_t\in CH^2(X_t)$ belongs to $S_g(X_t)$ for every $t\in U$. Then $Z_t\in S_g(X_t)$ for all $t\in T$.
\end{clm}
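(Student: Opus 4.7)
\emph{Proof plan.} The plan is to spread out and specialize. Given $t_0\in T\setminus U$, first reduce to the case $T=\Spec R$, a DVR with closed point $0$ lying over $t_0$ and generic point $\eta$ mapping into $U$: choose an irreducible curve in $T$ through $t_0$ meeting $U$, normalize and localize. All further finite base changes on $T$ will be harmless since membership in $S_g$ depends only on the underlying $K3$ surface, which is unchanged by such a base change.

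At $\eta$ one has $\cZ|_{X_\eta}=[W_\eta]+a\,c_{X_\eta}$ with $W_\eta$ effective of degree $g$; by flatness of $\cZ$ over $T$, the integer $a=\deg(\cZ|_{X_t})-g$ is independent of $t$. I aim to spread the three ingredients $\cZ$, $W_\eta$ and $c_{X_\eta}$ to cycles on $\cX$, form their difference $\alpha\in CH^2(\cX)$, and observe that $\alpha|_{X_\eta}=0$; a standard specialization principle will then force $\alpha|_{X_0}=0$.

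Spreading $W_\eta$ is straightforward: the associated point of $\mathrm{Sym}^g(X_\eta)$ extends, after a finite base change, to a $T$-section of the proper $T$-scheme $\mathrm{Sym}^g(\cX/T)$, giving a closed subscheme $\cW\subset\cX$ whose restriction $W_0:=\cW|_{X_0}$ is effective of degree $g$. For the Beauville-Voisin class I will construct a section $\sigma\colon T\to\cX$ such that $[\sigma(t)]=c_{X_t}$ for every $t$: fixing a relatively ample $H$ on $\cX$, Mori-Mukai ensures that each $|H_t|$ contains a divisor with only rational components, so the relative parameter scheme of such divisors is proper over $T$ with non-empty fibers, hence dominant over $T$; after base change to a dominating component one may select a section of the universal rational curve. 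This spreading of the Beauville-Voisin class is the step I expect to be the main technical obstacle, chiefly because of the bookkeeping of the auxiliary finite base changes.

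With $\sigma$ and $\cW$ in hand, set $\alpha:=\cZ-[\cW]-a\,[\sigma(T)]\in CH^2(\cX)$. By construction $\alpha|_{X_\eta}=0$ in $CH_0(X_\eta)$. Since $X_0\hookrightarrow\cX$ is a regular embedding (closed fiber of a flat map to the regular $1$-dimensional base $T$), the refined Gysin pullback $\iota_0^{!}\alpha\in CH_0(X_0)$ is defined and coincides, by Fulton's specialization principle, with the specialization of $\alpha|_{X_\eta}=0$; hence $\iota_0^{!}\alpha=0$. (Equivalently: lift the zero class on $X_\eta$ to the zero class on $\cX$ via the surjection of the localization sequence, and note that cycles supported on $X_0$ have trivial Gysin restriction because $[X_0]^2=0$.) Therefore $\cZ|_{X_0}=[W_0]+a\,c_{X_0}\in S_g(X_0)$, as desired.
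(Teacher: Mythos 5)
There is a genuine gap, and it sits at the very first substantive step: the assertion ``At $\eta$ one has $\cZ|_{X_\eta}=[W_\eta]+a\,c_{X_\eta}$ with $W_\eta$ effective of degree $g$.'' The hypothesis of the claim concerns \emph{closed} (complex) points of $U$ (recall the paper's convention that points are closed points), while $\eta$ is the generic point of the curve; nothing in the hypothesis applies to it. Passing from ``$Z_t\in S_g(X_t)$ for all closed $t\in U$'' to a decomposition over the generic (or geometric generic) fiber is precisely the hard content of the statement, because membership in $S_g$ is a condition of rational equivalence, and the locus of $t$ where such a condition holds is \emph{not} constructible: it is only a countable union of closed subsets of $T$ (one for each bound on the degree of the auxiliary effective cycle and on the degree of the rational curve in the symmetric product witnessing the equivalence). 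A priori the witnesses at the various $t\in U$ could have unbounded complexity, with no single algebraic family realizing them all, and then no decomposition would spread to $\eta$. The missing ingredient is exactly the countability argument at the heart of the paper's proof: writing $U\subset\bigcup_{n,e}T[n,e]$ with each $T[n,e]$ closed, and using that a countable union of closed subsets of an irreducible complex curve covering a dense open set must have one member equal to all of $T$; only after this pigeonhole step does one know that witnesses of bounded complexity exist over a dense open set, hence (by Chevalley/properness) over $\eta$ after a finite extension. An equivalent fix is the ``geometric generic fiber $=$ very general fiber'' principle, but that too rests on the same countability considerations, and you invoke neither.

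Apart from this gap, your plan is sound and is in fact a reasonable alternative to the second half of the paper's argument: where the paper shows that points in the closure $T[n,e]$ of $T[n,e]^0$ still satisfy the conclusion by degenerating the witnesses inside a relative Hilbert scheme (or space of genus-$0$ stable maps), you instead spread out the cycles $W_\eta$ and $c_{X_\eta}$, form $\alpha:=\cZ-[\cW]-a[\sigma(T)]$ with $\alpha|_{X_\eta}=0$, and conclude $\iota_0^{!}\alpha=0$ from $i^{!}i_{*}=0$ (triviality of the normal bundle of a fiber); that specialization argument is correct, and arguably cleaner, once the generic-point decomposition is available. Note also that the step you single out as the main technical obstacle --- spreading the Beauville--Voisin class to a section --- is not the real difficulty; the paper disposes of it with the same base-change-and-section device you describe. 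The difficulty you must address is the closed-points-to-generic-point passage above.
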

 \begin{proof}
The claim follows from the fact that the set of degree-$d$ effective $0$-cycles on a  variety $V$ belonging to a fixed linear equivalence class is a countable union of closed subsets of the symmetric product $V^{(d)}$. We give a proof for the reader's convenience. 
We may assume that $T$ is an irreducible curve. In particular $\deg Z_t$ is independent of $t\in T$: let $d:=\deg Z_t$. 
We are free to perform a base-change: thus we may assume that there exists a section $\pi\colon T\to\cX$ of $f$ such that $\pi(t)$ represents $c_{X_t}$ for every $t\in T$:  let $p_t:=\pi(t)$. 
Let $n,e\in\ZZ$ such that $n+d\ge 0$ and $e\ge 0$. We let $T[n,e]^0\subset T$ be the set of $t$ such that there exist  $W\in X_t^{(n)}$,  $Y\in X_t^{(g)}$ and a map $f\colon \PP^1\to X_t^{(n+d)}$ such that 
\begin{enumerate}
\item[(1)]
$(Z_t+W)$ and $(Y+(d-g) p_t+W)$ are effective, 
\item[(2)]
$f(0)=(Z_t+W)$ and $f(\infty)=(Y+(d-g) p_t+W)$.
\end{enumerate}
If $t\in T[n,e]^0$ then the class $[Z_t]$ belongs to $S_g(X_t)$. Let  $T[n,e]$ be the closure of $T[n,e]^0$ in $T$. By considering the relative Hilbert scheme parametrizing subschemes of $X_t^{(d+n)}$ for $t\in T$  with Hilbert polynomial $p(m):=em+1$ (or the relative  parameter space for  genus $0$ stable maps to $X_t^{(d+n)}$) we get that if   $t\in T[n,e]$ then $[Z_t]$ belongs to $S_g(X_t)$. Conversely - see  Example~1.6.3 of~\cite{fulton} - if $[Z_t]$ belongs to $S_g(X_t)$ then $t\in T[n,e]$ for some $n$ and $e$ as above.
 Thus 
\begin{equation*}
U\subset\bigcup_{n+d\ge 0\ 
e\ge 0}  T[n,e].
\end{equation*}
Since each $T[n,e]$ is closed and $U$ is open it follows that there exist $n_0,e_0$ such that $U\subset T[n_0,e_0]$. By hypothesis $U$ is dense in $T$ and hence $T=T[n_0,e_0]$: it follows that $Z_t\in S_g(X_t)$ for all $t\in T$.
\end{proof}
\begin{prp}\label{prp:bastacont}
Let $X$ be a $K3$ surface and  $v$ be  a Mukai vector for $X$. Let $H$ be an ample divisor on $X$. Suppose that $\gM_v(X,H)^{\rm st}$ is not empty. If
\begin{equation}\label{ciduecont}
\{c_2(F) \mid [F]\in\gM_v(X,H)^{\rm st} \}\subset\{\gz\in S_{d(v)}(X) \mid \deg\gz=c_2(v)\}
\end{equation}
then~\eqref{anvedi} holds.
\end{prp}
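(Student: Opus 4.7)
My plan is to establish both containments in~\eqref{anvedi} by combining \Ref{clm}{specializzo} with the hypothesis and the nonemptiness of $\gM_v(X,H)^{\rm st}$.

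For the containment $\subseteq$, I would fix $[F_0]\in\ov{\gM}_v(X,H)^{\rm st}$ and produce a smooth connected pointed curve $(T,t_0)$ together with a flat family $\cF$ of $H$-semistable sheaves on $X$ parametrized by $T$ such that $\cF_{t_0}\cong F_0$ and $[\cF_t]\in\gM_v(X,H)^{\rm st}$ for $t$ in a dense open $U\subset T$. Such a family exists because $\gM_v(X,H)$ is a projective coarse moduli scheme whose closed subscheme $\ov{\gM}_v^{\rm st}$ is covered by irreducible curves through $[F_0]$ meeting the open subset $\gM_v^{\rm st}$; after normalizing and passing to a suitable \'etale cover, the Quot-scheme GIT presentation of $\gM_v$ allows one to lift such a curve to a genuine family of semistable sheaves. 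With this family in hand, the relative class $\cZ:=c_2(\cF)\in CH^2(X\times T)$ restricts on each fiber to $c_2(\cF_t)$, which lies in $S_{d(v)}(X)$ for $t\in U$ by the hypothesis \eqref{ciduecont}. Applying \Ref{clm}{specializzo} to the constant family $X\times T\to T$ with $g=d(v)$ then yields $c_2(F_0)=\cZ_{t_0}\in S_{d(v)}(X)$, and the degree equals $c_2(v)$ by topological constancy of the Chern character in a flat family.

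For the reverse containment $\supseteq$, I would fix $\gz=[p_1+\ldots+p_{d(v)}]+ac_X\in S_{d(v)}(X)$ with $\deg\gz=c_2(v)$ (hence $a=c_2(v)-d(v)$), and aim to exhibit $[F]\in\ov{\gM}_v^{\rm st}$ with $c_2(F)=\gz$. Starting from any $[F_0]\in\gM_v^{\rm st}$ (which has $c_2(F_0)=[Z_0]+ac_X$ by the hypothesis), one would then deform $F_0$ within the $2d(v)$-dimensional moduli so as to slide the effective support of $c_2$ from $Z_0$ to an effective 0-cycle representing $\gz$. A natural concrete route is to choose a curve $C\subset X$ of arithmetic genus $d(v)$ containing $p_1,\ldots,p_{d(v)}$ and realize $F$ as a suitable semistable extension $0\to E\to F\to\iota_{*}\xi\to 0$ with $\xi$ a coherent sheaf on $C$; by \Ref{clm}{spingo} the $c_2$-contribution of the quotient lies in $\iota_{*}CH_0(C)+\ZZ c_X\subset S_{d(v)}(X)$, and the flexibility in choosing $C$ and $\xi$ should allow one to arrange $c_2(F)=\gz$ exactly.

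The main obstacle is this reverse containment: the specialization argument dispatches the forward upgrade to $\ov{\gM}_v^{\rm st}$ essentially mechanically, but producing a semistable sheaf with \emph{arbitrarily} prescribed $c_2$ in $S_{d(v)}(X)_{c_2(v)}$ requires a nontrivial construction exploiting the dimension count $2d(v)=\dim\gM_v^{\rm st}$ and the rigidity of the Beauville--Voisin ring. A secondary subtlety in the forward direction is arranging that the lifted family $\cF$ consist of genuine $H$-semistable sheaves throughout $T$ (not merely $S$-equivalence classes), which is standard but requires care in the base change.
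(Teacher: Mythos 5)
Your first half ($\subseteq$) is correct and is essentially what the paper does implicitly: the locus of $[F]\in\ov{\gM}_v(X,H)^{\rm st}$ with $c_2(F)\in S_{d(v)}(X)$ is a countable union of closed subsets by the argument of \Ref{clm}{specializzo}, it contains the dense open subset $\gM_v(X,H)^{\rm st}$ by \eqref{ciduecont}, and hence it is all of $\ov{\gM}_v(X,H)^{\rm st}$; your Quot-scheme lifting is the standard way to make this precise.

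The genuine gap is the reverse inclusion, and the idea you are missing is not a refinement of your extension construction but something of a different nature. Two problems with your sketch. First, producing semistable sheaves with prescribed $c_2$ from exact sequences $0\to E\to F\to \iota_{*}\xi\to 0$ supported partly on curves through prescribed points is exactly what the paper does in \Ref{sec}{magari}, \emph{under the extra hypotheses of \Ref{thm}{valese}} (ample determinant, $s\ge 0$, genericity of $H$, small rank); but \Ref{prp}{bastacont} must hold for an \emph{arbitrary} Mukai vector with nonempty stable locus --- it is invoked precisely to upgrade those conditional inclusions to equalities --- so this route is circular in spirit and unavailable in the stated generality, and in any case you have no control on semistability of the extension, on its Mukai vector, or on its membership in $\ov{\gM}_v(X,H)^{\rm st}$. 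Second, and more fundamentally, your ``sliding'' picture presupposes that $c_2$ actually moves when $F$ deforms; that is precisely what must be proved (on a surface with trivial $CH_0$ the analogous map is constant no matter how large the moduli space is), so no dimension count alone can close the gap. The paper's argument is as follows: let ${\bf W}$ be the left-hand side of \eqref{anvedi} and let $\pi\colon X^{[d(v)]}\to S_{d(v)}(X)$ send $Z\mapsto [Z]+(c_2(v)-d(v))c_X$, whose image is exactly the right-hand side of \eqref{anvedi}. By the countability argument again, $\pi^{-1}{\bf W}$ is a countable union of closed subsets of $X^{[d(v)]}$, whence there is a closed $V\subset\pi^{-1}{\bf W}$ with $\pi(V)={\bf W}$, and it suffices to show $\dim V=2d(v)$ (then $V=X^{[d(v)]}$ by irreducibility of the Hilbert scheme). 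This is where the K3-specific input enters: by Equation~(2-9) of \cite{ograss} there exist a smooth quasi-projective $\wt{\gM}_v$, a generically finite surjective $p\colon\wt{\gM}_v\to\gM_v(X,H)^{\rm st}$ and a map $q\colon\wt{\gM}_v\to X^{[d(v)]}$ compatible with taking $c_2$, such that $p^{*}\sigma_v$ is a nonzero constant multiple of $q^{*}\sigma_{d(v)}$ --- this is \eqref{paragono} --- where $\sigma_v$ is the Mukai-Tyurin symplectic form on the moduli space and $\sigma_{d(v)}$ is the trace symplectic form on the Hilbert scheme. If $\dim V<2d(v)$, then $q^{*}\sigma_{d(v)}$, hence $p^{*}\sigma_v$, is everywhere degenerate, contradicting the fact that $\sigma_v$ is symplectic and $\dim\gM_v(X,H)^{\rm st}=2d(v)$. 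This nondegeneracy argument is the missing key step.
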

\begin{proof}
Let
\begin{equation}\label{leuzzi}
{\bf W}:=\{c_2(F) \mid [F]\in\ov{\gM}_v(X,H)^{\rm st} \}. 
\end{equation}
By hypothesis ${\bf W}\subset S_{d(v)}(X)$. Let  
\begin{equation*}
\begin{matrix}
 X^{[d(v)]} & \overset{\pi}{\lra} & S_{d(v)}(X) \\
 Z & \mapsto & [Z]+(c_2(v)-d(v))c_X
\end{matrix}
\end{equation*}
(Here $[Z]$ is the cycle-class associated to the scheme $Z$ i.e.~the class of $\sum_{p\in X}\ell(\cO_{Z,P})p$.) Arguing as in the proof of~\Ref{clm}{specializzo} we get that $\pi^{-1}{\bf W}$ 
is a countable union of closed subsets of $ X^{[d(v)]}$. It follows that there exists a closed 
 $V\subset \pi^{-1}{\bf W}$ such that $\pi(V)={\bf W}$. It suffices to prove that $\dim V=\dim X^{[d(v)]}=2d(v)$. Let $\sigma\in H^0(K_X)$ be a symplectic form. Then $\sigma$ induces a symplectic form $\sigma_{d(v)}$ on $X^{[d(v)]}$ - the {\it trace} of $\sigma$, see~\cite{mum} - and the 
{\it Mukai-Tyurin}   symplectic form   $\sigma_{v}$ on $\gM_v(X,H)^{\rm st}$, see~\cite{muk,tyur}. The two symplectic forms are compatible, up to a factor. This means that there exist a  smooth quasi-projective $\wt{\gM}_v$, a generically finite surjective map $\wt{\gM}_v\to\gM^{\rm st}_v$  and a map $q\colon \wt{\gM}_v\to X^{[d(v)]}$ such that 
\begin{equation}\label{paragono}
p^{*}\sigma_v=-4\pi^2 q^{*}\sigma_{d(v)}.
\end{equation}
In fact the above equation follows from Equation~(2-9) of~\cite{ograss} and arguments similar to those given in the proof of~\Ref{clm}{specializzo}. 
Now assume that $\dim V<\dim X^{[d(v)]}$. Then~\eqref{paragono} gives that $\sigma_v$ is everywhere degenerate - recall  that $\dim\gM_v(X,H)^{\rm st}=2d(v)=\dim X^{[d(v)]} $. That is a contradiction.
\end{proof}
\subsection{Severi varieties}\label{subsec:curvenodate}
\setcounter{equation}{0}
 Let $L$ be an ample line-bundle on $X$ and 
  $0\le\delta\le g(L)$. We let $V_{\delta}(X,L) \subset |L|$ be the Severi variety parametrizing
 integral curves whose geometric genus is $(g(L)-\delta)$ - thus $V_{\delta}(X,L)$ is locally closed. If $V_{\delta}(X,L)$ is non-empty then it has pure dimension 
\begin{equation*}
\dim V_{\delta}(X,L)=\dim|L|-\delta=g(L)-\delta.
\end{equation*}
By results of X.~Chen and Bogomolov - Hassett - Tscinkel we know that $V_{\delta}(X,L)$ is non-empty if $(X,L)$ is generic. Let us be more precise. 
Let 
\begin{equation}\label{mappapi}
\pi\colon\cX\lra T_{g}
\end{equation}
 be a complete  family of $K3$ surfaces with a polarization of degree $(2g-2)$ i.e.~the following hold:
\begin{enumerate}
\item[(1)]
$\pi$ is a projective and smooth map, we let $\cM$ be \lq\lq the\rq\rq relativley ample line-bundle.
\item[(2)]
Let $t\in T_g$: then  $X_t=\pi^{-1}(t)$ is a $K3$ surface. 
\item[(3)]
Let $t\in T_g$ and  $M_t:=\cM|_{X_t}$: then $c^{\rm hom}_1(M_t)$ is indivisible and $g(M_t)=g$.
\item[(4)]
 if $X$ is a $K3$ surface equipped with an indivisible ample line-bundle $M$ with $g(M)=g$  there exist $t\in T_{g}$ and an isomorphism $f\colon X\overset{\sim}{\lra} X_t$ such that $f^{*}M_t\cong M$.
\end{enumerate}
Such a family exists, moreover  we may assume that $T_{g}$  is  irreducible by the Global Torelli Theorem for $K3$ surfaces.  
Below is the result that we mentioned (see Ch.~11 of~\cite{huyonk3} for a detailed treatment  of the proof by Bogomolov - Hassett - Tschinkel).
\begin{thm}[Chen~\cite{chen},  Bogomolov - Hassett - Tschinkel~\cite{bht}]\label{thm:severiesiste}
Keep notation as above. Let $n>0$ be an integer. There exists an open dense $U_{g}(n)\subset T_{g}$ such that the following holds. Let $0\le\delta\le g(M^{\otimes n}_t)$ and   $t\in U_{g}(n)$: then $V_{\delta}(X_t, M^{\otimes n}_t)$ is non-empty.
\end{thm}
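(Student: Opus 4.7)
The plan is to reduce the theorem to two separate claims: the existence of an integral \emph{rational} curve in $|nM_t|$ for $t$ in a dense open subset (the maximal case $\delta=g(M_t^{\otimes n})$), and a partial-smoothing argument that propagates existence from $\delta=g$ down to all intermediate values of $\delta$. The case $\delta=0$ is automatic from Bertini, so the content lies entirely in producing nodal integral curves of every admissible geometric genus.

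For the first claim, I would follow the strategy of Bogomolov--Hassett--Tschinkel: degenerate the family $\pi\colon\cX\to T_g$ to a judiciously chosen special fibre $X_0$ (for instance an elliptic $K3$ with a section, or a suitable Kummer surface $\mathrm{Km}(E_1\times E_2)$) and exhibit explicitly an integral nodal rational curve $R_0\in |nM_0|$. On an elliptic $K3$ with section $\sigma$ and fibre class $f$, one writes $nM_0$ as a positive integer combination of $\sigma$ and $f$ and glues copies of the section and fibre components so that the resulting curve $R_0$ has geometric genus $0$ and only nodal singularities; one then checks integrality. Next, using the unobstructedness of nodal curves on $K3$ surfaces together with the fact that every node of a curve in a complete linear system $|L|$ can be smoothed independently, one lifts $R_0$ to a flat family of nodal rational curves over a Zariski neighbourhood of $0$ in $T_g$. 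This gives a non-empty open subset $U\subset T_g$ over which $V_{g(nM_t)}(X_t,M_t^{\otimes n})\neq\es$.

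For the second claim, fix a nodal rational curve $R_0\subset X_0$ with $g:=g(nM_0)$ nodes, and for each $0\le\delta\le g$ partially normalize $R_0$ at a chosen set of $g-\delta$ nodes to produce a curve $\widetilde R_\delta$ of geometric genus $g-\delta$ with exactly $\delta$ remaining nodes. The key deformation-theoretic fact on a $K3$ surface is that the equisingular deformation space of $\widetilde R_\delta\subset X$ inside $|nM|$ is smooth of the expected codimension equal to the number of (remaining) nodes, and that the individual nodes can be independently smoothed. Consequently $\widetilde R_\delta$ deforms to an integral curve in $|nM_t|$ with exactly $\delta$ nodes, for $t$ ranging in some open dense subset of $T_g$. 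Taking the intersection of these open subsets over the finite range $0\le\delta\le g(nM_t)$ produces the desired open dense $U_g(n)$.

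The openness and density of $U_g(n)$ follow from constructibility of the Severi locus in $T_g$ (geometric genus of an integral curve in a linear system is constructible in families, and the conditions are finitely many), combined with the irreducibility of $T_g$ furnished by the Global Torelli theorem. The genuine difficulty is concentrated entirely in the first step: the production of an integral nodal rational curve on the special fibre. This is the content of Chen's and Bogomolov--Hassett--Tschinkel's theorems and is significantly harder than the subsequent partial-smoothing argument, which is a routine consequence of the $K$-triviality of the $K3$ surface and the local theory of nodal deformations.
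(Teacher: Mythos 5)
The first thing to note is that the paper contains \emph{no proof} of this statement: it is imported wholesale from Chen and Bogomolov--Hassett--Tschinkel, with a pointer to Ch.~11 of Huybrechts' lectures for a detailed treatment. So the comparison can only be against the cited literature, and your outline does reproduce its architecture: degenerate to a special fibre, produce genus-zero curves there, deform to the general fibre, then fill in the intermediate genera $0\le\delta\le g(M_t^{\otimes n})$ by smoothing nodes of an integral nodal rational curve independently. Your second step (independent smoothing of nodes of an integral nodal curve on a $K3$, a Tannenbaum-type result) and your last step (constructibility of the locus of $t$ with $V_{\delta}(X_t,M_t^{\otimes n})\neq\es$, hence openness and density once non-emptiness holds generically) are sound.

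The genuine gap is in your first step, and it is not merely that the step is ``significantly harder'': as written it is incorrect. On an elliptic $K3$ with section $\sigma$ and fibre class $f$, \emph{every} curve in $|nM_0|$ assembled from copies of the section and of (nodal, rational) fibres is reducible by construction, so there is no integrality to ``check'' on the special fibre; integrality is a property that Chen and BHT establish for the \emph{deformed} curves on the general fibre, and proving it is the entire content of their theorems. Moreover, the tool you invoke to lift $R_0$ --- independent smoothing of the nodes of a curve inside $|L|$ --- cannot perform this lift: that is a statement about deformations inside a \emph{fixed} surface, whereas rational curves in a fixed class on a fixed $K3$ form a finite set (the Severi variety $V_{g}$ has dimension $g-g=0$), so a reducible genus-zero curve can never be deformed to an irreducible one without moving the surface in moduli. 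The correct mechanism is deformation of genus-zero \emph{stable maps} relative to the $19$-dimensional family $\cX\to T_g$: every component of the relative moduli space has dimension at least $\dim T_g$ (a semiregularity-type statement special to $K3$ fibres), so a rigid configuration on the special fibre deforms over a dense subset of $T_g$; one must then exclude that all deformations have reducible, non-reduced, or multiple-cover image, and must also control the class exactly (an integral rational curve in $|kM_t|$ for some $k<n$ is not good enough). Without that analysis the first step does not close, and the proof with it.
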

 Fix $0\le\delta\le g(M^{\otimes n}_t)$: the Severi varieties $V_{\delta}(X_t,M^{\otimes n}_t)$ for $t\in U_{g}(n)$ fit together to give 
 \begin{equation}
\cV_{\delta}(n)\lra U_{g}(n).
\end{equation}
  We let $\cW_{\delta}(n)$  be the closure of $\cV_{\delta}(n)$ in the projective bundle over $T_{g}$ with fiber $|M^{\otimes n}_t|$ over $t$. Thus we have a proper surjective map
   \begin{equation}
\rho_{\delta,n}\colon \cW_{\delta}(n)\lra T_{g}.
\end{equation}
 Let  $0\le g_0\le g(M^{\otimes n}_t)$ and $\delta_0:=g(M^{\otimes n}_t)-g_0$. 
Given $t\in T_{g}$ we let 
\begin{equation*}
 \Sigma_{g_0}(X_t,M^{\otimes n}_t):=\rho_{\delta_0,n}^{-1}(t).
\end{equation*}
\begin{rmk}\label{rmk:chiusurafibra}
If  $t\in T_g$ is generic then $\Sigma_{g_0}(X_t,M^{\otimes n}_t)$ is the closure of $V_{\delta_0}(X_t,M^{\otimes n}_t)$ in $| M^{\otimes n}_t |$.
\end{rmk}
By~\Ref{thm}{severiesiste} and a standard argument we get that 
\begin{equation*}
\dim\Sigma_{g_0}(X_t,M^{\otimes n}_t)=g_0,\quad \text{ $t\in T_g$  generic.}
\end{equation*}
(Pure dimension.) Since $\rho_{\delta_0,n}$ is a proper map we get that 
\begin{equation}\label{almenogizero}
\dim\Sigma_{g_0}(X_t,M^{\otimes n}_t)\ge g_0,\quad \forall t\in T_g.
\end{equation}
\begin{prp}\label{prp:anakin}
 Let $X$ be a  $K3$ surface. Let $L$ be an ample line-bundle on $X$. Let 
 $0\le g_0\le g( L )$. 
 Let $C\in \Sigma_{g_0}(X,L)$ and $\iota\colon C\hra X$ be the inclusion.  Then $\iota_{*} CH_0(C)\subset S_{g_0}(X)$. 
\end{prp}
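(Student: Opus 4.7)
The plan is to reduce, by a specialization argument in the family of polarized $K3$ surfaces $\pi\colon\cX\to T_g$, to the case of an integral curve of geometric genus $g_0$, where Claim~\ref{clm:perchegi} applies. First I would write $L=M^{\otimes n}$ where $M\in\Pic(X)$ is the unique primitive line-bundle of which $L$ is a positive power ($M$ is automatically ample by Nakai--Moishezon) and set $g:=g(M)$. By property~(4) of the family $\pi$, there exist $t_0\in T_g$ and an isomorphism $(X,M)\cong(X_{t_0},M_{t_0})$; under it $C$ corresponds to a point $w_0\in\rho_{\delta_0,n}^{-1}(t_0)=\Sigma_{g_0}(X,L)$, where $\delta_0:=g(L)-g_0$. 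Choose an irreducible component $W$ of $\cW_{\delta_0}(n)$ containing $w_0$ and let $\cC_W\subset\cX_W:=W\times_{T_g}\cX$ denote the universal curve.

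Second I would dispose of the generic case. Since $\cW_{\delta_0}(n)=\ov{\cV_{\delta_0}(n)}$, the open set $W^{\circ}:=W\cap\cV_{\delta_0}(n)$ is dense in $W$. For $w\in W^{\circ}$ the fibre $C_w\subset X_{\rho(w)}$ is integral of geometric genus $g_0$; let $\nu_w\colon\wt C_w\to C_w$ denote its normalization and $\iota_w\colon C_w\hra X_{\rho(w)}$ the inclusion. Claim~\ref{clm:perchegi} applied to the non-constant map $\iota_w\circ\nu_w\colon\wt C_w\to X_{\rho(w)}$ (of a smooth projective curve of genus $g_0$) gives $(\iota_w\circ\nu_w)_{*}CH_0(\wt C_w)\subset S_{g_0}(X_{\rho(w)})$; since every closed point of $C_w$ lifts to $\wt C_w$, $\nu_{w*}$ is surjective onto $CH_0(C_w)$, and one concludes $(\iota_w)_{*}CH_0(C_w)\subset S_{g_0}(X_{\rho(w)})$.

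To transfer this conclusion to $w_0$ I would specialize. Given $\xi\in CH_0(C)$, write $\xi=\sum_{i=1}^k n_i[p_i]$ with closed points $p_i\in C$ and $n_i\in\ZZ$, and choose an irreducible curve $T\subset W$ through $w_0$ that meets $W^{\circ}$, so that $T\cap W^{\circ}$ is open dense in $T$. Using general hyperplane sections in a projective compactification of the surface $\cC_T:=\cC_W\times_W T$ through each $(p_i,w_0)$ one produces irreducible curves $\Gamma_i\subset\cC_T$ finite over $T$ and containing $(p_i,w_0)$. Setting $T':=\wt\Gamma_1\times_T\cdots\times_T\wt\Gamma_k$, picking a preimage $t_0'\in T'$ of $w_0$, and letting $s_i\colon T'\to\cC_{T'}$ be the sections with $s_i(t_0')=p_i$, the cycle
\[
\cZ \ :=\ \sum_i n_i\,\iota_{*}[s_i(T')] \ \in\ CH^2(\cX_{T'}),\qquad \cX_{T'}:=T'\times_{T_g}\cX,
\]
restricts at $t_0'$ to $\iota_{*}\xi$, and over the dense open preimage of $W^{\circ}$ in $T'$ restricts to a class in $(\iota_{w(t')})_{*}CH_0(C_{w(t')})\subset S_{g_0}(X_{\rho(w(t'))})$ by the previous step. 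Claim~\ref{clm:specializzo} applied to $\cX_{T'}\to T'$ and $\cZ$ then yields $\iota_{*}\xi\in S_{g_0}(X)$.

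The main technical hurdle is the spreading-out in the third paragraph: the universal curve $\cC_W\to W$ does not in general admit sections through an arbitrary chosen point $p_i\in C$, so one must pass to a suitable finite cover and select components of the relative Hilbert scheme that dominate the chosen base $T$. Once this is arranged, the conclusion is an immediate consequence of Claims~\ref{clm:perchegi} and~\ref{clm:specializzo}.
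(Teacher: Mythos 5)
Your proof is correct and takes essentially the same route as the paper's: write $L\cong M^{\otimes n}$ with $M$ primitive ample, transport everything to the family $\pi\colon\cX\to T_g$, use \Ref{clm}{specializzo} to reduce to the case of an integral curve of geometric genus $g_0$, and conclude by \Ref{clm}{perchegi}. The only difference is organizational: the paper applies \Ref{clm}{specializzo} twice (first to make $t\in T_g$ generic, invoking \Ref{rmk}{chiusurafibra}, then to move $C$ into the Severi variety $V_{\delta_0}$), whereas you specialize once along a curve in a component of $\cW_{\delta_0}(n)$ and spell out the spreading-out of the $0$-cycle via multisections, a step the paper leaves implicit.
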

\begin{proof}
There exists an ample line-bundle $M$ with $c_1^{\rm hom}(M)$ indivisible such that $L\cong M^{\otimes n}$. Let $g:=g(M)$. There exist $t\in T_g$ 
and an isomorphism $f\colon X\overset{\sim}{\lra} X_t$ such that $f^{*}M_t\cong M$.
Thus it suffices to prove~\Ref{prp}{anakin} for $X=X_t$ and $L=M_t^{\otimes n}$ where  $t\in T_g$. By~\Ref{clm}{specializzo} we may assume that $t$ is generic in $T_g$ and hence $\Sigma_{g_0}(X_t,M^{\otimes n}_t)$ is the closure of $V_{\delta_0}(X_t,M^{\otimes n}_t)$ in $| M^{\otimes n}_t |$ - 
see~\Ref{rmk}{chiusurafibra}. Again by~\Ref{clm}{specializzo} we may assume that $C\in V_{\delta_0}(X_t,M^{\otimes n}_t)$: in that case the result holds by~\Ref{clm}{perchegi}. 
\end{proof}
\begin{crl}\label{crl:anakin}
 Let $X$ be a  $K3$ surface. Multiplication by $\ZZ$ maps $S_{g_0}(X)$ to itself.
\end{crl}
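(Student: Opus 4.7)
Given $\gz=[p_1+\cdots+p_{g_0}]+a c_X\in S_{g_0}(X)$ and $n\in\ZZ$, the plan is to produce a single curve $C\subset X$ belonging to $\Sigma_{g_0}(X,L)$ for some ample $L$ and containing all of $p_1,\dots,p_{g_0}$. Once such a $C$ is at hand, \Ref{prp}{anakin} gives $\iota_{*}CH_0(C)\subset S_{g_0}(X)$, where $\iota\colon C\hra X$ denotes the inclusion; in particular $n[p_1+\cdots+p_{g_0}]$, being the $\iota_{*}$-image of $n[p_1+\cdots+p_{g_0}]\in CH_0(C)$, lies in $S_{g_0}(X)$. The additional summand $na c_X$ is absorbed into the integer parameter present in the definition of $S_{g_0}(X)$, whence $n\gz\in S_{g_0}(X)$. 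The case $g_0=0$ is immediate since $S_0(X)=\ZZ c_X$.

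To exhibit such a $C$, fix an indivisible ample line bundle $M$ on $X$ and set $L:=M^{\otimes k}$ with $k$ large enough that $g(L)\geq g_0$. Inside the projective space $|L|$, of dimension $g(L)$, consider two subsets: the closed subset $\Sigma_{g_0}(X,L)$, which has dimension at least $g_0$ by~\eqref{almenogizero}, and the linear subsystem $B\subset|L|$ of curves through $p_1,\dots,p_{g_0}$. Since each point imposes a linear condition on $|L|$, $B$ is a linear subspace of codimension at most $g_0$, hence of dimension at least $g(L)-g_0$. Consequently
\begin{equation*}
\dim\Sigma_{g_0}(X,L)+\dim B\;\geq\;g_0+(g(L)-g_0)\;=\;\dim|L|,
\end{equation*}
and the standard fact that a closed subvariety of $\PP^N$ meets every linear subspace of complementary (or larger) dimension yields $\Sigma_{g_0}(X,L)\cap B\neq\es$. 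Any element of this intersection plays the role of the desired $C$.

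The main point requiring attention is the non-emptiness of $\Sigma_{g_0}(X,L)\cap B$ for \emph{every} choice of $p_1,\dots,p_{g_0}$, with no genericity hypothesis on the points. This is exactly what the linearity of $B$ inside the projective space $|L|$ secures: cutting a closed subvariety by successive hyperplanes drops the dimension by at most one at each step, so the intersection with a linear subspace of complementary dimension is unconditionally non-empty, without any transversality or general position input. Once $C$ is produced, the corollary is an immediate consequence of \Ref{prp}{anakin}.
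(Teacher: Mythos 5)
Your proposal is correct and is essentially the paper's own argument: the paper likewise takes $L=M^{\otimes n}$ with $M$ indivisible ample and $g(L)\ge g_0$, uses~\eqref{almenogizero} to get $\dim\Sigma_{g_0}(X,L)\ge g_0$, observes that passing through each point of the support is a hyperplane condition on $\Sigma_{g_0}(X,L)$ (your linear subspace $B$ is just the intersection of those $g_0$ hyperplanes, and your appeal to the projective dimension theorem is the same non-emptiness argument), and then concludes via \Ref{prp}{anakin} after viewing the cycle as supported on the resulting curve.
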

\begin{proof}
It suffices to prove that $a[Z]\in S_{g_0}(X)$ for every $a\in\ZZ$ and $Z\in X^{(g_0)}$. 
Let $M$ be an ample line-bundle on $X$  with $c_1^{\rm hom}(M)$ indivisible. There 
exist $t\in T_g$ 
and an isomorphism $f\colon X\overset{\sim}{\lra} X_t$ such that $f^{*}M_t\cong M$.
Let $n>0$ such that $g(M_t^{\otimes n})\ge g_0$: then $\Sigma_{g_0}(X_t,M^{\otimes n}_t)$ is not empty and $\dim\Sigma_{g_0}(X_t,M^{\otimes n}_t)\ge g_0$ by~\eqref{almenogizero}. The set of $C\in \Sigma_{g_0}(X_t,M^{\otimes n}_t)$ containing a fixed  point of $X$ is a hyperplane section of $\Sigma_{g_0}(X_t,M^{\otimes n}_t)$: it follows that there exists $C\in \Sigma_{g_0}(X_t,M^{\otimes n}_t)$ containing the support of $Z$. Thus we may view $Z$ as a $0$-cycle on $C$. 
Let $\iota\colon C\hra X$ be the inclusion: then $a[Z]=\iota_{*}([aZ])$. By~\Ref{prp}{anakin} we get that $\iota_{*}([aZ])\in S_{g_0}(X)$.
\end{proof}
\subsection{The trivial cases}\label{subsec:casibanali}
\setcounter{equation}{0}
We will show that~\eqref{anvedi} holds if $r=0,1$. 
\begin{prp}
Let $X$ be a  $K3$ surface and  $v=(r,\ell,s)$ be  a Mukai vector for $X$ with $r\le 1$. Suppose that $\gM_v(X,H)^{\rm st}$ is not empty. Then~\eqref{anvedi} holds.
\end{prp}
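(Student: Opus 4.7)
My plan is to dispatch the two cases $r=1$ and $r=0$ separately, in each case invoking \Ref{prp}{bastacont} to reduce the statement to the one-sided inclusion
\[
\{c_2(F):[F]\in\gM_v(X,H)^{\rm st}\}\subset\{\zeta\in S_{d(v)}(X):\deg\zeta=c_2(v)\}.
\]

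For $r=1$, an $H$-stable sheaf $F$ with $v(F)=(1,\ell,s)$ is necessarily torsion-free, hence of the form $F\cong L\otimes\cI_Z$ for the unique $L\in\Pic(X)$ with $c_1^{\rm hom}(L)=\ell$ and some $0$-dimensional subscheme $Z\subset X$.  A short Chern-character computation, using $\ch(F)=\ch(L)\cdot\ch(\cI_Z)$ together with $c_2=(c_1^2-2\ch_2)/2$, yields $c_2(F)=[Z]$ in $CH_0(X)$.  Equating components of $v(F)$ with $v$ forces $\ell(Z)=c_2(v)=d(v)$, so $c_2(F)=[Z]\in S_{d(v)}(X)$ by the very definition, with the correct degree.

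For $r=0$, let $\iota:C\hookrightarrow X$ be the scheme-theoretic support of $F$: the fundamental $1$-cycle of $C$ equals $\ell$, stability makes $C$ connected, and $p_a(C)=1+\ell^2/2=d(v)$.  By \Ref{clm}{spingo} one has $c_2(F)=\iota_*\eta+a\,c_X$ for some $\eta\in CH_0(C)=CH_0(C_{\rm red})$ and $a\in\ZZ$.  Let $\nu:\tilde C=\bigsqcup_i\tilde C_i\to C_{\rm red}$ be the partial normalization (each $\tilde C_i$ smooth of genus $g_i$), lift $\eta$ to $\tilde\eta=\sum_i\tilde\eta_i$, choose on each $\tilde C_i$ a point $p_i$ whose image in $X$ lies on a rational curve (so that $(\iota\circ\nu)_*[p_i]=c_X$, as in the proof of \Ref{clm}{perchegi}), and apply Riemann--Roch to write $\tilde\eta_i\sim E_i-N_i p_i$ with $E_i$ effective of degree $g_i$.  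Pushing forward and summing,
\[
\iota_*\eta=\Bigl[\sum_i(\iota\circ\nu)(E_i)\Bigr]-\Bigl(\sum_iN_i\Bigr)c_X\in S_{\sum_ig_i}(X).
\]
The filtration \eqref{filtrochow} then gives $c_2(F)\in S_{d(v)}(X)$ as soon as $\sum_ig_i\le p_a(C)$.

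The main obstacle is precisely this last inequality.  For connected \emph{reduced} $C=C_{\rm red}$ it falls out of the exact sequence $0\to\cO_C\to\nu_*\cO_{\tilde C}\to Q\to 0$ by comparing Euler characteristics, once one observes that connectedness forces $\ell(Q)\ge\#\{\text{components}\}-1$.  To handle possibly non-reduced $C$ (where $(-2)$-curve components appearing with multiplicity can make $p_a(C)$ strictly smaller than $p_a(C_{\rm red})$), I would filter $F$ by the powers $\cI^kF$ of $\cI=\cI_{C_{\rm red},C}$, use Whitney's formula $c(F)=\prod_k c(\cI^kF/\cI^{k+1}F)$, absorb the cross terms $c_1(\cdot)\cdot c_1(\cdot)\in\ZZ c_X$ via \eqref{interseco}, and apply the reduced case to each subquotient (which is a sheaf on $C_{\rm red}$) to conclude.
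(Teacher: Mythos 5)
Your $r=1$ case is correct and is essentially the paper's own argument. The genuine gap is in the $r=0$ case, and it sits exactly at the inequality $\sum_i g_i\le d(v)$ on which everything hinges. First, a conflation: the divisor whose fundamental cycle is $\ell$ is the \emph{Fitting} support of $F$ (a member of $|L|$), not the scheme-theoretic support; if $F$ has generic rank $r_i\ge 2$ on a component $C_i$ then $\ell=\sum_i r_i[C_i]$ while the scheme-theoretic support may be reduced, so your identity $p_a(C)=1+\ell^2/2=d(v)$ fails as stated, and your Euler-characteristic argument only bounds $\sum_i g_i$ by $p_a(C_{\rm red})$, which is \emph{not} bounded by $d(v)$ in general. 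Indeed the inequality you need is false numerically: take $C_1$ smooth of genus $g\ge 3$, $C_2$ a $(-2)$-curve with $C_1\cdot C_2=1$, and $\ell=[C_1]+2[C_2]$; this is connected and effective, yet $d(v)=1+\ell^2/2=g-2<g=\sum_i g_i$. You give no argument linking stability to the exclusion of such supports, so the proof does not close. Your Whitney-filtration fix for non-reduced supports makes matters worse rather than better: the only general containment is $S_a(X)+S_b(X)\subset S_{a+b}(X)$, so summing the subquotients multiplies the genus bound by the multiplicity. For instance, for a pure sheaf with $v=(0,2e,s)$ on an elliptic $K3$ ($e$ the fiber class, so $d(v)=1$) whose filtration by $\cI^kF$ has two nonzero subquotients supported on a fiber, your method yields only $c_2(F)\in S_2(X)$, one step short of the required $S_1(X)$.

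The underlying reason no such curve-intrinsic argument (normalization plus Riemann--Roch) can work is that the statement is about rational equivalence \emph{in the surface}: points of a component of geometric genus larger than $d(v)$ must be shown equivalent in $X$ to cycles in $S_{d(v)}(X)$, which is information about $X$, not about the abstract curve. This is precisely what \Ref{prp}{anakin} supplies, via the Severi-variety results of Chen and Bogomolov--Hassett--Tschinkel together with the specialization argument of \Ref{clm}{specializzo}: for $L$ ample, \emph{every} $C\in|L|$ satisfies $\iota_*CH_0(C)\subset S_{g(L)}(X)$, irrespective of the geometric genus of $C$ or of its components (here one uses that $\Sigma_{g(L)}(X,L)=|L|$, since it is closed of dimension $\ge g(L)=\dim|L|$). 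The paper's $r=0$ proof is accordingly: for $\ell\ne 0$, write $F=\iota_*\xi$ with $C\in|L|$ the Fitting support, apply \Ref{clm}{spingo} to get $c_2(F)=\iota_*\eta+ac_X$, then \Ref{prp}{anakin} with $g_0=d(v)=g(L)$, and conclude by \Ref{prp}{bastacont}. Finally, you also omit the case $\ell=0$, where $F$ is a skyscraper sheaf and there is no supporting curve at all; the paper treats it separately using $c_2(\CC_p)=-[p]$ and \Ref{crl}{anakin}.
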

\begin{proof}
Let $L$ be a line-bundle (unique up to isomorphism) such that $c_1^{\rm hom}(L)=\ell$. 
Suppose that $r=0$. Then
\begin{equation*}
2d(v)=\dim\ov{\gM}_v(X,H)^{\rm st}=2+\la v,v\ra=2+\ell\cdot\ell=2g(L).
\end{equation*}
Thus $d(v)=g(L)$. Suppose first that $\ell=0$. Then $L\cong\cO_X$. Since $g(\cO_X)=1$ we get that $d(v)=1$.  We have an isomorphism
\begin{equation*}
\begin{matrix}
S & \overset{\sim}{\lra} & \gM_v(S,H) \\
p & \mapsto & \CC_p
\end{matrix}
\end{equation*}
Since $c_2(\CC_p)$ is represented by $-p$ we get that $ c_2(\CC_p)\in S_1(X)$ by~\Ref{crl}{anakin}. By~\Ref{prp}{bastacont} we get that~\eqref{anvedi} holds. Now suppose that $\ell\not=0$ and hence $L\not\cong\cO_X$.
Let $[F]\in\ov{\gM}_v(X,H)^{\rm st}$: then there exist  $C\in |L|$ and a sheaf $\xi$ on $C$ such that $F=\iota_{*}\xi$ where $\iota\colon C\hra X$ is the inclusion map.  By~\Ref{clm}{riemroch} and~\Ref{prp}{anakin} we get that  
$c_2(\iota_{*} \xi)\in S_{d(v)}(X)$. This proves that if   $[F]\in\gM_v(X,H)^{\rm st}$ then  $c_2(F)\in S_{d(v)}(X)$: thus~\Ref{prp}{bastacont} gives that~\eqref{anvedi} holds. Now  suppose that $r=1$. Then $\gM_v(X,H)^{\rm st}$ ($=\gM_v(X,H)$) parametrizes sheaves $\cI_Z\otimes L$ where $Z\subset X$ is a $0$-dimensional subscheme of length $d(v)$: since $c_2(\cI_Z\otimes L)=[Z]$  we get that~\eqref{anvedi} holds by definition of $S_{d(v)}(X)$. 
\end{proof}
\subsection{Moduli spaces of sheaves}\label{subsec:invarianza}
\setcounter{equation}{0}
 Let  $X$ be a projective variety and $H$  an ample divisor on $X$. A torsion-free sheaf $F$ on $X$ is $H$-semistable if for every non-zero subsheaf $E\subset F$ we have
\begin{equation}\label{eulminore}
\frac{\chi(E\otimes\cO_X(xH))}{\rk E}\le \frac{\chi(F\otimes\cO_X(xH))}{\rk F}\qquad x\gg 0.
\end{equation}
 $F$  is $H$-stable if strict inequality holds for all $E\not=F$, it is  properly $H$-semistable if  it is $H$-semistable but not $H$-stable. 
 Let $E$ be a non-zero torsion-free sheaf on $X$: the $H$-slope of $E$ is defined to be
 \begin{equation}
\mu_H(E):=\frac{\deg (c_1(E)\cdot H)}{\rk E}.
\end{equation}
 A torsion-free sheaf $F$ is $H$-$\mu$-semistable if 
 if for every non-zero subsheaf $E\subset F$ we have $\mu_H(E)\le \mu_H(F)$; it is $H$-$\mu$-stable if strict inequality holds whenever $\rk(E)<\rk(F)$ and it is  properly $H$-$\mu$-semistable if  it is $H$-$\mu$-semistable but not $H$-$\mu$-stable. The two notions of (semi)stability are related as follows:  if $F$  is $H$-semistable then it is $H$-$\mu$-semistable, if $F$ is $H$-$\mu$-stable then it  is $H$-stable. 
\begin{prp}\label{prp:tensinv}
Let $X$ be a   $K3$ surface and $H$ an ample divisor on $X$. Let $v\in H^{\bullet}(X;\ZZ)$ be a Mukai vector. Let $y\in\ZZ$. Then~\eqref{anvedi} holds for $v$ if and only if it holds with $v$ replaced by $v\cdot \ch(\cO_X(yH))$.
\end{prp}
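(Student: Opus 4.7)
The plan is to realise the functor $F \mapsto F \otimes \cO_X(yH)$ as an isomorphism of moduli data that matches up the two instances of \eqref{anvedi}. First I would observe that tensoring by the line bundle $L := \cO_X(yH)$ preserves $H$-Gieseker (semi)stability: the normalised Hilbert polynomial $\chi(E \otimes L \otimes \cO_X(xH))/\rk E$ equals $\chi(E \otimes \cO_X((x+y)H))/\rk E$, so the ordering in \eqref{eulminore} for $x \gg 0$ is unchanged, and the same is true for every subsheaf. Since tensoring also preserves $S$-equivalence (it commutes with Jordan-Hölder filtrations) and satisfies $v(F \otimes L) = v(F) \cdot \ch(L)$, it induces an isomorphism of projective schemes $\gM_v(X,H) \overset{\sim}{\lra} \gM_{v'}(X,H)$, where $v' := v \cdot \ch(\cO_X(yH))$; this restricts to an isomorphism $\gM_v(X,H)^{\rm st} \overset{\sim}{\lra} \gM_{v'}(X,H)^{\rm st}$ and, being an isomorphism of projective schemes, carries $\ov{\gM}_v(X,H)^{\rm st}$ bijectively onto $\ov{\gM}_{v'}(X,H)^{\rm st}$.

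Next I would compute the shift produced by tensoring on the relevant invariants. By Whitney's formula, for any rank-$r$ sheaf $F$ with $c_1(F)$ homologous to $\ell$,
\[
c_2(F \otimes L) - c_2(F) = (r-1)\, c_1(F) \cdot c_1(L) + \binom{r}{2} c_1(L)^2 \in CH_0(X).
\]
By the Beauville-Voisin identity \eqref{interseco}, the right-hand side is an integer multiple of $c_X$; more precisely it equals $m \cdot c_X$ with $m := (r-1)(\ell \cdot yH) + \binom{r}{2}(yH)^2 \in \ZZ$, which depends on $v$ and $y$ but not on the individual sheaf $F$. A direct expansion of the Mukai product shows that the same integer $m$ satisfies $c_2(v') - c_2(v) = m$. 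Finally, the Mukai pairing is invariant under multiplication by $\ch(L) = \exp(c_1(L))$, hence $d(v') = d(v)$.

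Assembling these, the left-hand side of \eqref{anvedi} for $v'$ is the translate by $m \cdot c_X$ of the left-hand side for $v$. On the right-hand side, $S_{d(v')}(X) = S_{d(v)}(X)$, and this cone is closed under translation by $\ZZ \cdot c_X$ by the very definition of $S_{g}(X)$, so the degree-$c_2(v')$ slice is the translate by $m \cdot c_X$ of the degree-$c_2(v)$ slice. Therefore \eqref{anvedi} for $v$ and \eqref{anvedi} for $v'$ are equivalent; the reverse implication also follows automatically by applying the same argument with $-y$ in place of $y$.

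I do not expect a serious obstacle: the argument is essentially bookkeeping, combining Whitney's formula with the Beauville-Voisin relation. The only point requiring mild care is to ensure that the tensor isomorphism is defined on the whole semistable moduli space (so that closures correspond), which is why I would emphasise from the outset that both Gieseker semistability and $S$-equivalence are preserved under tensoring with $L$.
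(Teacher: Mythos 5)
Your proposal is correct and takes essentially the same route as the paper: the paper's proof is exactly the tensoring isomorphism $\gM_v(X,H)\overset{\sim}{\lra}\gM_w(X,H)$, $[F]\mapsto[F\otimes\cO_X(yH)]$ (preserving the stable loci), followed by the Whitney-formula computation of $c_2(F\otimes\cO_X(yH))-c_2(F)$ and an appeal to~\eqref{interseco}. The only difference is that you spell out details the paper leaves implicit (preservation of semistability and $S$-equivalence, the equality $d(w)=d(v)$, and the matching translation by $m\,c_X$ of both sides of~\eqref{anvedi}), so your write-up is a fuller version of the same argument.
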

\begin{proof}
Let $w:=v\cdot \ch(\cO_X(yH))$. We have an isomorphism
\begin{equation*}
\begin{matrix}
\gM_v(X,H) & \overset{\sim}{\lra} & \gM_w(X,H) \\
[F] & \mapsto & [F\otimes\cO_X(yH)]
\end{matrix}
\end{equation*}
mapping $\gM_v(X,H)^{\rm st}$ to $\gM_w(X,H)^{\rm st}$.
We have  
\begin{equation}\label{tenscidue}
c_2(F\otimes\cO_X(yH))=c_2(F)+(r-1)y c_1(\cO_X(H))\cdot c_1(F) +{{r}\choose{2}}y^2 c_1(\cO_X(H))\cdot c_1(\cO_X(H)).
\end{equation}
 Hence the proposition follows from~\eqref{interseco}.
\end{proof}
Next we will compare  moduli spaces parametrizing torsion-free sheaves on a $K3$ surface $X$ with a fixed Mukai vector $v$ and (semi)stable with respect  to different ample divisors. The question has been studied more in general for arbitrary surfaces, see Appendix 4.C of~\cite{huy-lehn} and the references therein - here we will limit ourselves to the case of torsion-free sheaves on $K3$ surfaces. We will assume that $1\le r$ and $-2\le v^2$. Let
\begin{equation}
|v|:=\frac{r^2}{4} v^2 +\frac{r^4}{2}.
\end{equation}
Notice that  $|v|\ge 0$. Let $\Amp(X)\subset NS(X)$ be the set of ample classes and 
$\Amp(X)_{\RR}\subset NS(X)_{\RR}$ be the ample cone (here
$NS(X)_{\RR}:=H^{1,1}_{\RR}(X)$). A {\it $v$-wall} of $X$ consists of the interesection $\Amp(X)_{\RR}\cap \alpha^{\bot}$ where $\alpha\in NS(X)$ is such that
\begin{equation}
-|v|\le \alpha\cdot\alpha<0.
\end{equation}
The set of $v$-walls is  locally finite in $\Amp(X)_{\RR}$ - see for example Lemma~4.C.2 of~\cite{huy-lehn}. An {\it open $v$-chamber} of $\Amp(X)_{\RR}$ is a connected component of the complement  of the union of all $v$-walls in $\Amp(X)_{\RR}$. An ample divisor $H$ on $X$ is $v$-{\it generic} if its class belongs to an open $v$-chamber. 
The following result underscores the importance of $v$-walls and $v$-chambers - for the proof see the Appendix of~\cite{ogweight}.
\begin{prp}\label{prp:camere}
Let $X$ be a $K3$ surface and $v$ be a Mukai vector for $X$ with $r\ge 1$. 
\begin{enumerate}
\item[(1)]
Let $H$ be a $v$-generic ample divisor. Suppose that $F$ is a torsion-free properly $H$-slope-semistable  sheaf on $X$ with $v(F)=v$. Let $E\subset F$ be an $H$-slope destabilizing sheaf. Then 
\begin{equation}
\frac{c_1(E)}{\rk(E)}= \frac{c_1(F)}{\rk(F)}.
\end{equation}
\item[(2)]
Let $H_1,H_2$ be $v$-generic ample divisors whose classes belong to the {\bf same}  open $v$-chamber. Let $F$ be a  torsion-free sheaf on $X$  with $v(F)=v$.  Let $E\subset F$ be a non-zero subsheaf. Then $\mu_{H_1}(E)<\mu_{H_1}(F)$ if and only if $\mu_{H_2}(E)<\mu_{H_2}(F)$.
\item[(3)]
Suppose that $v$ is {\bf primitive} and $H$ is a $v$-generic ample divisor. Then $\gM_v(X,H)^{\rm st}=\gM_v(X,H)$.
\end{enumerate}
\end{prp}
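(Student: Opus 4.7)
The argument centers on the destabilizing class
\[
\alpha \;:=\; r\,c_1(E) - r'\,c_1(F)\;\in\;\NS(X),\qquad r':=\rk(E),\;\; r'':=r-r',
\]
whose pairing with $H$ encodes the slope comparison through $\mu_H(E)-\mu_H(F) = (\alpha\cdot H)/(r r')$. First I would verify that, since $v(F)=v$, the Bogomolov discriminant $\Delta(F) := 2r c_2(F)-(r-1) c_1(F)^2$ equals $2r^2+v^2$, so $|v| = \tfrac{r^2}{4}\Delta(F)$. The key algebraic identity, obtained by expanding $c_1$ and $c_2$ along $0\to E\to F\to F/E\to 0$ whenever $E$ is saturated and $F/E$ is torsion-free, reads
\[
\alpha^2 \;=\; r\,r''\,\Delta(E) \;+\; r\,r'\,\Delta(F/E) \;-\; r'\,r''\,\Delta(F).
\]

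For (1), let $F$ be $\mu_H$-semistable with an $H$-slope destabilizing subsheaf $E$; semistability of $F$ upgrades the inequality to equality $\mu_H(E)=\mu_H(F)$, i.e.\ $\alpha\cdot H = 0$. The case $r'=r$ is immediate: $c_1(F/E)$ is effective and $H$-orthogonal, hence zero, so $\alpha=0$. Otherwise $0<r'<r$, and I would first replace $E$ by its saturation $\wt E$, which preserves $\alpha$ because $c_1(\wt E/E)$ is effective and $H$-orthogonal. Both $\wt E$ and $F/\wt E$ are then $\mu_H$-semistable with the same slope as $F$, so Bogomolov forces $\Delta(\wt E),\Delta(F/\wt E)\ge 0$; inserting this in the identity and using $r' r'' \le r^2/4$ gives $\alpha^2 \ge -|v|$. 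The Hodge index theorem, applied to the ample class $H$ with $\alpha\cdot H = 0$, supplies the matching upper bound $\alpha^2 \le 0$, with equality iff $\alpha=0$. A non-zero $\alpha$ would therefore place $\alpha^{\bot}\cap\Amp(X)_{\RR}$ among the $v$-walls and make it pass through the class of $H$, contradicting $v$-genericness. Hence $\alpha = 0$. For (3), I would lift (1) to Hilbert polynomials: a properly $H$-semistable $F$ admits $E\subsetneq F$ with identical reduced Hilbert polynomial, giving both $\mu_H(E)=\mu_H(F)$ and $\chi(E)/r' = \chi(F)/r$. Part (1) yields $r\,c_1(E) = r'\,c_1(F)$; using $\chi(T)=\rk(T)+s(T)$ on a $K3$ surface, the Euler characteristic equality becomes $r\,s(E)=r'\,s$, so $r\,v(E) = r'\,v$ in $H^{\bullet}(X;\ZZ)$. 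Primitivity of $v$ then forces $r\mid r'$ and hence $r'=r$, in which case $F/E$ would be a non-zero torsion sheaf of identically vanishing Hilbert polynomial, absurd.

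The crux is (2). Fix $E\subset F$ and suppose the biconditional fails; after disposing of the trivial cases $r'=r$ and $\alpha=0$ directly, one may assume $0<r'<r$, $\alpha\ne 0$, and (up to swapping) $\alpha\cdot H_1 < 0 \le \alpha\cdot H_2$. Since an open $v$-chamber is convex (as a connected component of the complement of a locally finite hyperplane arrangement, intersected with the ample cone), the segment $[H_1,H_2]$ remains in the chamber and, by continuity, contains some $H_0$ with $\alpha\cdot H_0 = 0$; Hodge index then forces $\alpha^2 < 0$. To reach a contradiction I want the matching lower bound $\alpha^2 \ge -|v|$, which would make $\alpha^{\bot}$ a $v$-wall meeting the open chamber. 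The main obstacle is establishing this bound without $\mu$-semistability of $F$, since Bogomolov is not available for $E$ and $F/E$ as in (1). My plan is to saturate $E$ and then refine by the $\mu_{H_0}$-Harder-Narasimhan filtrations of $\wt E$ and $F/\wt E$, summing the non-negative Bogomolov contributions of the semistable graded pieces through iterated uses of the key identity, and checking that the combined bookkeeping preserves $\alpha^2\ge -|v|$. This Harder-Narasimhan induction is where the wall-chamber arithmetic becomes most delicate.
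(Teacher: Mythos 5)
First, note that the paper does not actually prove this proposition: it refers to the Appendix of \cite{ogweight} (the same circle of ideas as \cite[\S 4.C]{huy-lehn}). So the comparison below is with that standard argument. Your parts (1) and (3) are correct and are essentially that argument: the discriminant identity for $\alpha=r\,c_1(E)-r'c_1(F)$, Bogomolov applied to the saturation $\wt{E}$ and to $F/\wt{E}$ (both $\mu_H$-semistable because they have the same $H$-slope as the semistable sheaf $F$), the bound $\alpha^2\ge -r'r''\Delta(F)\ge -\tfrac{r^2}{4}\Delta(F)=-|v|$ (note that the middle inequality needs $\Delta(F)\ge 0$, which you get from Bogomolov for $F$ or from the standing assumption $v^2\ge -2$), and Hodge index plus $v$-genericity to force $\alpha=0$. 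Your reduction of (3) to (1), giving $r\,v(E)=r'\,v$ and then $r\mid r'$ by primitivity, is also fine.

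Part (2) is where there is a genuine gap, and it is not one that more careful Harder--Narasimhan bookkeeping can close: with no semistability hypothesis on $F$, the bound $\alpha^2\ge -|v|$ is false, and in fact item (2), read literally for arbitrary torsion-free $F$ with $v(F)=v$ and arbitrary $E\subset F$, is false. The reason your plan must fail is that when you expand $\Delta(\wt E)$ along its $\mu_{H_0}$-HN filtration, the identity produces cross-terms which are \emph{negative} multiples of $\xi_{ij}^2$, where $\xi_{ij}$ compares the $c_1$'s of distinct HN factors; since distinct HN factors have distinct $H_0$-slopes, $\xi_{ij}$ is not orthogonal to the ample class $H_0$, so Hodge index gives no sign control: $\xi_{ij}^2$ can be arbitrarily large and positive, hence $\Delta(\wt E)$ arbitrarily negative. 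Concretely, let $X$ be a $K3$ with $NS(X)=\ZZ g_1\oplus\ZZ g_2$, $g_1^2=4$, $g_2^2=-4$, $g_1\cdot g_2=0$ (such $X$ exist; there are no $(-2)$-classes, so $\Amp(X)_{\RR}$ is a full component of the positive cone). Take $v=(3,0,-1)$, so $v^2=6$ and $|v|=54$, and $\xi:=g_1+4g_2$, which is primitive with $\xi^2=-60<-|v|$; one checks that no $v$-wall passes through the ample class $H_0:=4g_1+g_2\in\xi^{\bot}$. Now set
\begin{equation*}
F=\cO_X(5g_1+4g_2)\oplus\cO_X(-4g_1)\oplus\bigl(\cI_W\otimes\cO_X(-g_1-4g_2)\bigr),\qquad \ell(W)=24,
\end{equation*}
and $E=\cO_X(5g_1+4g_2)\oplus\cO_X(-4g_1)$. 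Then $F$ is torsion-free with $v(F)=v$, and $E$ is even a saturated subsheaf with $c_1(E)=\xi$. For $k\gg 0$ the integral classes $H_1=4kg_1+(k-1)g_2$ and $H_2=4kg_1+(k+1)g_2$ are ample and lie in the same open $v$-chamber (after rescaling they converge to $H_0$, near which there are no walls, and chambers are cones), yet $\mu_{H_1}(E)=8>0=\mu_{H_1}(F)$ while $\mu_{H_2}(E)=-8<0=\mu_{H_2}(F)$. So the biconditional fails, and no bookkeeping can save it.

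What is true --- and is all that the paper actually uses, in the proof of \Ref{crl}{camere}, where $F$ is $H_1$-Gieseker-stable and hence $\mu_{H_1}$-semistable --- is item (2) under the additional hypothesis that $F$ is slope-semistable with respect to some polarization of the chamber; this is the form proved in the cited Appendix and in \cite[\S 4.C]{huy-lehn}. With that hypothesis the proof is your part-(1) computation run at the right polarization: for a fixed $F$, the set of ample classes with respect to which $F$ is slope-semistable is closed (an intersection of the closed conditions $\alpha_{E'}\cdot H\le 0$), so along the segment $[H_1,H_2]$ (chambers are indeed convex, as you argue) there is a last polarization $H_{t_0}$ at which $F$ is still slope-semistable; a saturated subsheaf destabilizing just beyond $H_{t_0}$ (a boundedness argument lets you choose one working for a sequence $t_n\downarrow t_0$) has $H_{t_0}$-slope equal to that of $F$, hence it and its torsion-free quotient are $\mu_{H_{t_0}}$-semistable, Bogomolov applies, and $\alpha^{\bot}$ becomes a $v$-wall through the interior point $H_{t_0}$ of the chamber --- a contradiction. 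You should either add this semistability hypothesis to the statement you prove, or prove the chamber-invariance of slope-semistability in this form and then deduce the slope comparisons needed in \Ref{crl}{camere} from it together with part (1).
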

Before proving the next result we will write out the normalized Hilbert polynomial of a sheaf $E$ on a $K3$ surface $X$.
\begin{equation}\label{hilbnorm}
\frac{\chi(E\otimes\cO_X(xH))}{\rk E}=\frac{1}{2}\deg(H\cdot H) x^2+\mu_H(E) x
+\frac{\chi(E)}{\rk(E)}.
\end{equation}
\begin{crl}\label{crl:camere}
Let hypotheses be as in~\Ref{prp}{camere}. Let $H_1,H_2$ be  $v$-generic   ample divisors whose classes belong to the {\bf same}  open $v$-chamber. 
A torsion-free sheaf on $X$  with $v(F)=v$ 
is $H_1$-(semi)stable  if and only if it is $H_2$-(semi)stable. We have an isomorphism
\begin{equation}\label{pjanic}
\begin{matrix}
\gM_v(X,H_1) & \overset{\sim}{\lra} & \gM_v(X,H_2)  \\
[F] & \mapsto & [F] 
\end{matrix} 
\end{equation}
\end{crl}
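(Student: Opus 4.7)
The plan is to translate $H_i$-(semi)stability into a comparison of $H_i$-slopes together with a polarization-independent comparison of $\chi/\rk$, and then feed parts~(1) and~(2) of \Ref{prp}{camere} into this. By~\eqref{hilbnorm} the normalized Hilbert polynomial $\chi(E\otimes\cO_X(xH))/\rk E$ has quadratic coefficient $\tfrac12\deg(H\cdot H)$ depending only on $H$, linear coefficient $\mu_H(E)$, and constant coefficient $\chi(E)/\rk(E)$. Consequently, for a torsion-free sheaf $F$ with $v(F)=v$ and a proper nonzero subsheaf $E\subsetneq F$, the Gieseker semistability comparison~\eqref{eulminore} for $x\gg 0$ is equivalent to: either $\mu_H(E)<\mu_H(F)$, or $\mu_H(E)=\mu_H(F)$ together with the (polarization-free) inequality $\chi(E)/\rk(E)\le\chi(F)/\rk(F)$ (strict inequality for stability).

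Now fix a torsion-free $F$ with $v(F)=v$ and assume $F$ is $H_1$-semistable; I will show $F$ is $H_2$-semistable. Take any $0\ne E\subsetneq F$. If $\mu_{H_1}(E)<\mu_{H_1}(F)$, part~(2) of \Ref{prp}{camere} gives $\mu_{H_2}(E)<\mu_{H_2}(F)$, which suffices. If instead $\mu_{H_1}(E)=\mu_{H_1}(F)$, then $F$ is properly $H_1$-slope-semistable and $E$ is an $H_1$-slope destabilizing subsheaf; part~(1) of \Ref{prp}{camere} then yields $c_1(E)/\rk(E)=c_1(F)/\rk(F)$ in $\NS(X)_{\QQ}$, hence $\mu_{H'}(E)=\mu_{H'}(F)$ for \emph{every} ample $H'$, in particular for $H_2$. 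The remaining condition $\chi(E)/\rk(E)\le\chi(F)/\rk(F)$ is polarization-independent and is supplied by the $H_1$-semistability of $F$. Thus $F$ is $H_2$-semistable. Swapping the roles of $H_1$ and $H_2$ yields the converse, and the identical case analysis, replacing $\le$ by $<$ in the Gieseker comparison, handles stability.

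For the scheme-theoretic isomorphism~\eqref{pjanic} the above shows that the class of $H_1$-semistable (respectively $H_1$-stable) sheaves with Mukai vector $v$ coincides with the class of $H_2$-semistable (respectively $H_2$-stable) sheaves, and this fiberwise equality passes to flat families. Moreover $S$-equivalence coincides as well: by an easy induction using part~(1) of \Ref{prp}{camere}, for any $H_1$-semistable $F$ the terms $F_i$ of an $H_1$-Jordan-H\"older filtration satisfy $c_1(F_i)/\rk(F_i)=c_1(F)/\rk(F)$, so their $H_2$-slopes also coincide with $\mu_{H_2}(F)$, and therefore the $H_1$- and $H_2$-Jordan-H\"older filtrations agree. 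The moduli functors of $S$-equivalence classes represented by $\gM_v(X,H_1)$ and $\gM_v(X,H_2)$ therefore coincide, giving the isomorphism by universality. The delicate point of the whole argument is the slope-equality case: part~(2) alone would not rule out a sheaf that is strictly slope-destabilized with respect to $H_2$ but has equal $H_1$-slope to $F$, and it is genuinely part~(1) of \Ref{prp}{camere} that is needed to upgrade $H_1$-slope equality to equality of Chern-class ratios and hence of all slopes.
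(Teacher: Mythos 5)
Your proof is correct and takes essentially the same route as the paper's: both rest on the decomposition~\eqref{hilbnorm} of the normalized Hilbert polynomial, Item~(2) of \Ref{prp}{camere} in the strict-slope case, Item~(1) to upgrade slope equality to $c_1(E)/\rk(E)=c_1(F)/\rk(F)$ (hence equality of slopes for every polarization), and the polarization-independence of $\chi/\rk$. The only differences are organizational: you argue directly (semistability first, then stability) where the paper argues by contradiction (stability first), and you spell out the coincidence of Jordan-H\"older graded pieces and hence of $S$-equivalence classes, a point the paper compresses into \lq\lq from this one gets the isomorphism\rq\rq.
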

\begin{proof}
Assume that $F$ is $H_1$-stable. Suppose that $F$ is not $H_2$-stable. Let $E\subset F$ be a destabilizing sheaf i.e.~$0\not=E\not=F$ is non-zero and 
\begin{equation}
\frac{\chi(E\otimes\cO_X(xH))}{\rk E}\ge \frac{\chi(F\otimes\cO_X(xH))}{\rk F}\qquad x\gg 0.
\end{equation}
If $\mu_{H_2}(E)>\mu_{H_2}(F)$ then $\mu_{H_1}(E)>\mu_{H_1}(F)$ by Item~(2) of~\Ref{prp}{camere}: it follows that $F$ is not $H_1$-semistable, that contradicts our hypothesis. Thus 
\begin{equation}\label{pazzini}
\mu_{H_2}(E)=\mu_{H_2}(F),\qquad \frac{\chi(E)}{\rk(E)}\ge  \frac{\chi(F)}{\rk(F)}.
\end{equation}
By Item~(1) of~\Ref{prp}{camere} we get that 
\begin{equation*}
\mu_{H_1}(E)=\frac{\deg(c_1(E)\cdot H_1)}{\rk(E)}=\frac{\deg(c_1(F)\cdot H_1)}{\rk(F)}=\mu_{H_1}(F).
\end{equation*}
Since $F$ is $H_1$-stable it follows  that (see~\eqref{hilbnorm})
\begin{equation}
 \frac{\chi(E)}{\rk(E)}<  \frac{\chi(F)}{\rk(F)}.
\end{equation}
That contradicts~\eqref{pazzini}. We have proved that if $F$ is $H_1$-stable then it is $H_2$-stable. An easy application of Item~(1)  of~\Ref{prp}{camere} gives that  if $F$ is properly $H_1$-semistable then it is properly $H_2$-semistable. From this one gets that we have Isomorphism~\eqref{pjanic}.
\end{proof}
We close the present subsection by recalling the following result. 
\begin{thm}[Kuleshov~\cite{kuleshov}, Mukai~\cite{muktata}, Yoshioka~\cite{yoshi}]\label{thm:modce}
Let $X$ be a $K3$ surface. Let $v=(r,\ell,s)$ be a  Mukai vector for $X$ such that $-2\le v^2$. Suppose that  $r>0$ and that $r$ is coprime to the divisibility of $\ell$. Let $H$ be a $v$-generic ample divisor on $X$. Then $\gM_v(X,H)^{\rm st}$ is not empty (and it is equal to $\gM_v(X,H)$).
\end{thm}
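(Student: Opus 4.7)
The theorem contains two assertions: the equality $\gM_v(X,H)^{\rm st}=\gM_v(X,H)$ and the non-emptiness of either space. The equality is a direct consequence of Proposition~\ref{prp:camere}(3) once we observe that the coprimality hypothesis forces $v$ to be primitive in $H^{\bullet}(X;\ZZ)$: any decomposition $v=mv'$ with $v'=(r',\ell',s')$ integral gives $m\mid r$ and $m\mid\text{(divisibility of }\ell)$ (since $\ell=m\ell'$), so $m=1$. Primitivity of $v$ together with $v$-genericity of $H$ yields the equality.

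For non-emptiness my plan is a reduction via deformation. I would embed $(X,H,v)$ in a smooth family of polarized $K3$ surfaces $(\cX,\cH)\to T$ that keeps the class $\ell_t$ of Hodge type $(1,1)$ on every fiber (i.e.\ work inside the appropriate Noether-Lefschetz locus); this is possible because the Mukai vector $v$ is locally constant in such a family. Using the Global Torelli Theorem I would then specialize $(X,H)$ to a fiber $(X_0,H_0)$ with $\rho(X_0)=1$ and $\NS(X_0)=\ZZ\cdot H_0$, so $\ell_0=aH_0$ with $|a|$ equal to the divisibility of $\ell$, while preserving $r$, $s$ and $v^{2}$. Because the relative moduli space of $H_t$-semistable sheaves is proper over $T$ and the set of $v$-walls is locally finite in $\Amp(X_t)_{\RR}$, the $v$-genericity of $H_0$ propagates to a neighborhood, so non-emptiness on the special fiber implies non-emptiness on the general one.

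It then suffices to construct a stable sheaf on a Picard-rank-one $K3$ surface $X_0$ with Mukai vector $v_0=(r,aH_0,s)$ and $\gcd(r,|a|)=1$. For $v_0^{2}=-2$ I would invoke Mukai's spherical-bundle construction: the coprimality yields a unique simple rigid sheaf with Mukai vector $v_0$, and coprimality plus $\rho=1$ promotes simplicity to $\mu$-stability (any destabilizer would force a common divisor of $r$ and $a$). For $v_0^{2}\ge 0$ I would argue inductively, either by elementary transformations increasing $c_2$ by a point (hence $v^{2}$ by $2r$), or - more systematically - by a Fourier-Mukai equivalence $\Phi\colon D^{b}(X_0)\overset{\sim}{\lra}D^{b}(Y)$ where $Y$ is a two-dimensional component of some moduli space of $\mu$-stable bundles on $X_0$. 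The coprimality hypothesis guarantees the existence of a universal sheaf on $X_0\times Y$, and $\Phi$ carries the orbit of rank-one Mukai vectors on $Y$ (realized by ideal sheaves of points) to an orbit containing $v_0$ up to twist, which by Proposition~\ref{prp:tensinv} is harmless.

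The main obstacle is verifying that stability is preserved throughout: (i) the deformation must keep $H_t$ inside the same open $v$-chamber so that Corollary~\ref{crl:camere} applies; (ii) Mukai's spherical bundle must be shown to be $H_0$-$\mu$-stable, not merely simple; and (iii) the Fourier-Mukai transform must be shown to interchange $H$-(semi)stable sheaves with $H'$-(semi)stable sheaves on $Y$ for a suitable polarization $H'$, after a controlled twist. Each of these is a delicate lattice-theoretic and homological computation and constitutes the technical core of the combined Kuleshov-Mukai-Yoshioka argument.
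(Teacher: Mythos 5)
The paper itself gives no proof of this theorem: it is recalled verbatim from the literature (Kuleshov, Mukai, Yoshioka), and the only assertion that the paper's own machinery accounts for is the parenthetical equality $\gM_v(X,H)^{\rm st}=\gM_v(X,H)$. Your first paragraph handles exactly that point, and correctly: coprimality of $r$ with the divisibility of $\ell$ forces $v$ to be primitive, and then Item~(3) of \Ref{prp}{camere} applies. So far you are in agreement with everything the paper supplies.

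The non-emptiness argument, however, contains a genuine flaw and otherwise defers the real content. The flaw: a family of polarized $K3$ surfaces $(\cX,\cH)\to T$ containing $(X,H)$ as a fiber and keeping $\ell_t$ of type $(1,1)$ on every fiber cannot have a fiber of Picard rank $1$ unless $H\in\QQ\ell$. Indeed both $c_1(\cH_t)$ and $\ell_t$ stay algebraic on every fiber, so if $h:=c_1(\cO_X(H))$ and $\ell$ are linearly independent in $\NS(X)$ --- and $v$-genericity of $H$ in no way forces proportionality --- then every fiber of such a family has Picard number at least $2$, and the specialization you describe does not exist. The fix used in the literature is to abandon $H$: deform the quasi-polarized pair $(X,\ell_1)$, where $\ell_1$ is the primitive part of $\ell$, so that the nearby fibers have $\NS=\ZZ\ell_1$, repolarize there, and then --- this is an extra step your sketch is missing --- compare moduli spaces on $X$ taken with respect to two $v$-generic polarizations that need \emph{not} lie in a common open $v$-chamber, so that \Ref{crl}{camere} does not apply; that comparison is itself a wall-crossing theorem, not a formal consequence of local finiteness of walls. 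Separately, your arithmetic on the Picard-rank-one fiber proves less than you claim: if $E\subset F$ has rank $r'$ and $c_1(E)=bH_0$, coprimality of $r$ and $a$ excludes only the equal-slope case $br=ar'$; it does not exclude $br>ar'$. So coprimality promotes $\mu$-semistability to $\mu$-stability, but $\mu$-semistability of Mukai's rigid bundle --- and, for $v^2\ge 0$, the inductive step via elementary modifications or Fourier--Mukai transforms together with preservation of stability --- is precisely the content of the cited theorems, which your proposal lists as ``obstacles'' rather than proves.
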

\section{The degeneracy locus map}\label{sec:magari}
\setcounter{equation}{0}
In the present section we will suppose that the following hold:
\begin{enumerate}
\item[(1)]
 $X$ is a $K3$ surface and $H$ is an ample divisor on $X$.
\item[(2)]
$L$ is an {\bf ample} line-bundle on $X$. We let $\ell:=c_1(L)$ and
\begin{equation}\label{nonsolegg}
 v:=(r,\ell,s)\qquad r>0,\quad s\ge 0.
\end{equation}
\item[(3)]
 $[F]\in\gM_v(X,H)^{\rm st}$. 
\end{enumerate}
  By Serre duality $H^2(F)\cong \Hom(F,\cO_{X})^{\vee}$. By ampleness of $L$ and stability of $F$ we get that  $ \Hom(F,\cO_{X})=0$: it follows that $h^2(F)=0$. Thus
\begin{equation}\label{fenech}
h^0(F)=\chi(F)+h^1(F)\ge \chi(F)=r+s\ge r.
\end{equation}
 (The last inequality follows from~\eqref{nonsolegg}.)    Let $U\in\Gr(r,H^0(F))$:  we have the map of sheaves
\begin{equation}\label{gianburrasca}
\varphi_F^U\colon  U\otimes \cO_{X}\to  F.
\end{equation}
Let $\Gr(r,H^0(F))_{*}\subset \Gr(r,H^0(F))$ be the (open) subset of $U$ such that 
 $\det\varphi_F^U$ is non-zero: thus $\Gr(r,H^0(F))_{*}$ is non-empty if and only if global sections of $F$ generate $F$ generically. Let
\begin{equation}\label{dallagrass}
\begin{array}{ccl}
\Gr(r,H^0(F))_{*} & \overset{\lambda_F}{\lra} & |L| \\
U & \mapsto & C_F^U:=V(\det\varphi_F^U).
\end{array}
\end{equation}
This is the {\it degeneracy locus map} of $F$. 
\begin{lmm}\label{lmm:tiroindi}
Keep notation as above. The pull-back by $\lambda_F$ of the hyperplane class on $|L|$ is linearly equivalent to the Pl\"ucker hyperplane class on $\Gr(r,H^0(F))_{*}$. 
\end{lmm}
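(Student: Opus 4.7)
My plan is to globalize the construction of $\lambda_F$ over the Grassmannian $G := \Gr(r, H^0(F))$ and read off the line bundle pulled back from $\cO_{|L|}(1)$. Let $\cS \subset H^0(F)\otimes \cO_G$ denote the tautological subbundle of rank $r$; recall that the Plücker hyperplane class on $G$ is $c_1(\det\cS^{\vee})$. First I would note that $\det F \cong L$: indeed $c_1^{\mathrm{hom}}(F) = \ell = c_1^{\mathrm{hom}}(L)$, and on a $K3$ surface the map $\Pic(X)\to H^2(X;\ZZ)$ is injective (since $H^1(\cO_X)=0$).

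Next I would construct the universal degeneracy map on $G \times X$. Composing the pullback of the evaluation map $H^0(F)\otimes\cO_X\to F$ with the inclusion $\cS\hookrightarrow H^0(F)\otimes\cO_G$ yields a map
\begin{equation*}
\Phi\colon p_G^{*}\cS \lra p_X^{*}F
\end{equation*}
whose restriction to $\{U\}\times X$ recovers $\varphi_F^U$. Taking determinants (both sheaves have rank $r$) produces a global section
\begin{equation*}
\det\Phi \in H^0\bigl(G\times X,\ p_G^{*}\det\cS^{\vee}\otimes p_X^{*}L\bigr).
\end{equation*}
Pushing forward via $p_G$ and using flat base change ($p_{G*}p_X^{*}L \cong H^0(L)\otimes\cO_G$), this section corresponds to a bundle map
\begin{equation*}
\sigma\colon \det\cS \lra H^0(L)\otimes\cO_G,
\end{equation*}
equivalently, dually, $\sigma^{\vee}\colon H^0(L)^{\vee}\otimes\cO_G \to \det\cS^{\vee}$.

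Finally I would restrict to the open subset $G_{*}:=\Gr(r,H^0(F))_{*}$. By the very definition of $G_*$, the section $\det\varphi_F^U$ is non-zero for every $U\in G_*$, so $\sigma^{\vee}$ is surjective on $G_*$. A surjection $H^0(L)^{\vee}\otimes\cO_{G_*}\twoheadrightarrow\det\cS^{\vee}$ defines a morphism $G_* \to \PP(H^0(L)^{\vee}) = |L|$, with the defining property that the pullback of $\cO_{|L|}(1)$ equals $\det\cS^{\vee}$, i.e.~the Plücker hyperplane class. By construction this morphism sends $U$ to $V(\det\varphi_F^U) = C_F^U$, so it coincides with $\lambda_F$, and the lemma follows. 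No step presents a serious obstacle; the only care needed is in the bookkeeping of duals (the choice of identification $|L|=\PP(H^0(L)^{\vee})$, with a point corresponding to the vanishing locus of a section) and in invoking the identification $\det F\cong L$ on the $K3$ surface.
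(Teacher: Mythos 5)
Your argument is correct, but it takes a more roundabout (relative) route than the paper, whose proof is essentially two lines: the natural linear map $\bigwedge^r H^0(F)\to H^0(\det F)=H^0(L)$ induces a rational \emph{linear} map $\Lambda_F\colon \PP(\bigwedge^r H^0(F))\dra |L|$, and $\lambda_F$ is by definition the restriction of $\Lambda_F$ to $\Gr(r,H^0(F))_*$ sitting inside $\PP(\bigwedge^r H^0(F))$ via the Pl\"ucker embedding; since a linear projection pulls back the hyperplane class to (the restriction of) the hyperplane class, the lemma is immediate. Your construction is the universal-bundle translation of the same fact: writing $G=\Gr(r,H^0(F))$, your $\sigma\colon\det\cS\to H^0(L)\otimes\cO_G$ is nothing but the composition of the Pl\"ucker inclusion $\det\cS\hra\bigwedge^r H^0(F)\otimes\cO_G$ with the constant map of trivial bundles induced by that same linear map, so the two proofs share one underlying idea. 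What your version buys: it upgrades the conclusion from a linear equivalence of divisor classes to an isomorphism of line bundles $\lambda_F^*\cO_{|L|}(1)\cong\det\cS^\vee|_{G_*}$, it verifies (rather than takes for granted) that $\lambda_F$ is a morphism on all of $G_*$, and it isolates exactly where the definition of $G_*$ enters, namely the fiberwise nonvanishing of $\sigma$ and hence the surjectivity of $\sigma^\vee$. What it costs: pushforward and base-change bookkeeping, plus one point you passed over silently --- $F$, hence $p_X^*F$, need not be locally free, so $\det\Phi$ is a priori defined only away from the codimension-two locus $G\times\sing(F)$ and must be extended across it (legitimate, since $G\times X$ is smooth and the target sheaf is a line bundle); the paper's formulation hides the same issue in the very definition of $\det\varphi_F^U$, so this is a caveat to make explicit, not a gap in your argument.
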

\begin{proof}
The natural map $\bigwedge^r H^0(F)\to H^0(\det F)$ induces a rational map
\begin{equation}\label{prepens}
\Lambda_F\colon \PP(\bigwedge^r H^0(F))\dra |L|. 
\end{equation}
Embed $\Gr(r,H^0(F))$  in $\PP(\bigwedge^r H^0(F))$ via Pl\"ucker: then  $\lambda_F$ is the restriction of $\Lambda_F$ to $\Gr(r,H^0(F))_{*}$: the claim follows.  
\end{proof}
\begin{prp}\label{prp:tiroindi}
Keep notation as above and suppose that  $\Gr(r,H^0(F))_{*}=\Gr(r,H^0(F))$. Then $c_2(F)\in S_{d(v)}(X)$.
\end{prp}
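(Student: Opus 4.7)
The plan is to carry out the degeneracy-locus argument outlined in the introduction, which becomes clean under the hypothesis that $\lambda_F$ is a morphism defined on all of $\Gr(r,H^0(F))$.

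First I would collect the three relevant dimension estimates. Serre duality and stability give $h^{2}(F)=0$, so by~\eqref{fenech} we have $h^{0}(F)\geq r+s$, whence
$$\dim\Gr(r,H^{0}(F)) \;=\; r\bigl(h^{0}(F)-r\bigr) \;\geq\; rs.$$
Since $L$ is ample on the $K3$ surface $X$, Kodaira vanishing gives $h^{1}(L)=h^{2}(L)=0$, so $\dim|L|=\chi(L)-1=1+\ell\cdot\ell/2$. Combining with $v^{2}=\ell\cdot\ell-2rs$ gives $d(v)=1+\ell\cdot\ell/2-rs$ and hence
$$\dim|L|-d(v)=rs.$$
By~\Ref{lmm}{tiroindi} the pullback $\lambda_F^{*}\cO_{|L|}(1)$ equals the Pl\"ucker polarization, which is ample; since $\lambda_F$ is a morphism from the projective variety $\Gr(r,H^{0}(F))$, its fibers are finite and $\im\lambda_F$ is a closed subvariety of $|L|$ of dimension at least $rs$.

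Next I would intersect with a Severi locus. By~\eqref{almenogizero} we have $\dim\Sigma_{d(v)}(X,L)\geq d(v)$, so the sum of the two dimensions is at least $rs+d(v)=\dim|L|$. The projective dimension theorem in the projective space $|L|$ therefore yields
$$C_{0}\in(\im\lambda_F)\cap\Sigma_{d(v)}(X,L),$$
and I pick $U_{0}\in\Gr(r,H^{0}(F))$ with $\lambda_F(U_{0})=C_{0}$. The map $\varphi_F^{U_{0}}\colon U_{0}\otimes\cO_X\to F$ is injective (its source is torsion-free and the map is generically an isomorphism), its cokernel $\xi$ is a torsion sheaf supported on $C_{0}$ (since $\det\varphi_F^{U_{0}}$ annihilates it), and Whitney's formula applied to
$$0\to U_{0}\otimes\cO_X\to F\to \xi\to 0$$
collapses to $c_{2}(F)=c_{2}(\xi)$, because $U_{0}\otimes\cO_X$ has trivial total Chern class. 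Viewing $\xi$ as a sheaf on $C_{0}$ with inclusion $\iota\colon C_{0}\hookrightarrow X$, \Ref{clm}{spingo} gives $c_{2}(\xi)=\iota_{*}\eta+a\,c_{X}$ for some $\eta\in CH_{0}(C_{0})$ and $a\in\ZZ$. Since $C_{0}\in\Sigma_{d(v)}(X,L)$, \Ref{prp}{anakin} gives $\iota_{*}\eta\in S_{d(v)}(X)$, and $a\,c_{X}\in S_{0}(X)\subset S_{d(v)}(X)$ by~\eqref{filtrochow} together with~\Ref{crl}{anakin}. Therefore $c_{2}(F)\in S_{d(v)}(X)$, as required.

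I do not expect a genuine obstacle here: under the everywhere-defined hypothesis, the argument reduces to a dimension count combined with the two Chern-class inputs (\Ref{clm}{spingo} and~\Ref{prp}{anakin}). The only minor detail that requires a brief check is that the cokernel $\xi$ really factors as $\iota_{*}$ of a sheaf on $C_{0}$, which follows from the Fitting-style annihilation by $\det\varphi_F^{U_{0}}$. The deeper difficulty---removing the hypothesis $\Gr(r,H^{0}(F))_{*}=\Gr(r,H^{0}(F))$, that is, handling $F$ which is not generically globally generated and resolving the indeterminacies of the rational map $\lambda_F$---is precisely what the remaining sections of the paper are designed to address.
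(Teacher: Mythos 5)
Your proof is correct and follows essentially the same route as the paper's own: finiteness of $\lambda_F$ from \Ref{lmm}{tiroindi} plus the hypothesis, the count $\dim\im\lambda_F\ge rs=\dim|L|-d(v)$ via~\eqref{fenech}, Hirzebruch-Riemann-Roch and Kodaira vanishing, intersection with $\Sigma_{d(v)}(X,L)$ using~\eqref{almenogizero}, and then \Ref{clm}{spingo} together with \Ref{prp}{anakin} applied to the cokernel of $\varphi_F^{U_0}$. Incidentally, your sign $c_2(F)=c_2(\iota_{*}\xi)$ is the correct output of Whitney's formula (the paper writes $c_2(F)=-c_2(\iota_{*}\xi)$, a harmless slip, since $\iota_{*}CH_0(C)\subset S_{d(v)}(X)$ absorbs either sign).
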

\begin{proof}
By~\Ref{lmm}{tiroindi} and the hypothesis that $\Gr(r,H^0(F))_{*}=\Gr(r,H^0(F))$ we get that $\lambda_F$ is finite. By~\eqref{fenech} it follows that  
 \begin{equation}\label{farfalle}
\dim\im\lambda_F\ge  rs.
\end{equation}
 By Hirzebruch-Riemann-Roch and Kodaira vanishing we have 
\begin{equation*}
 rs=1+\frac{\ell\cdot\ell}{2}-d(v)=\dim  |L|-d(v).
\end{equation*}
By~\eqref{almenogizero}   we have $\dim\Sigma_{d(v)}(X,L)\ge d(v)$.
   Thus~\eqref{farfalle} gives that there exists
\begin{equation*}
C\in(\im\lambda_F\cap \Sigma_{d(v)}(X,L)).
\end{equation*}
We have an exact sequence
\begin{equation*}
0\lra U\otimes\cO_{X}\lra F\lra \iota_{*}\xi\lra 0
\end{equation*}
where $\xi$ is a sheaf supported on $C$. Thus
\begin{equation*}
c_2(F)=-c_2(\iota_{*}\xi).
\end{equation*}
Since $C\in \Sigma_{d(v)}(X,L)$ we get that $c_2(F)\in S_{d(v)}(X)$ by~\Ref{clm}{spingo} and~\Ref{prp}{anakin}. 
\end{proof}
\begin{prp}\label{prp:verdone}
Suppose that the following holds: if $C$ is an effective {\bf non-zero} divisor on $X$ 
\begin{equation}\label{humptydumpty}
\deg(C\cdot H)>\frac{r-1}{r}\deg(L\cdot H).
\end{equation}
Then  $c_2(F)\in S_{d(v)}(X)$.
\end{prp}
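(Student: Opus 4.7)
The plan is to show, under hypothesis~\eqref{humptydumpty}, that $\Gr(r,H^0(F))_{*}=\Gr(r,H^0(F))$, and then to invoke~\Ref{prp}{tiroindi}. Suppose for contradiction that some $U\in\Gr(r,H^0(F))$ satisfies $\det\varphi_F^U=0$, and set $E:=\im(\varphi_F^U)\subset F$, of rank $e<r$. Since $E$ is globally generated by the $r$ sections in $U$, wedging $e$ of them that are generically linearly independent yields a non-zero section of $\det E$, so $\det E\cong\cO_X(C)$ with $C$ effective. Let $E'\subset F$ be the saturation of $E$: the $1$-dimensional part of the support of $E'/E$ contributes a further effective divisor, so $c_1(E')$ is itself represented by an effective divisor. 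Because $F$ is $H$-stable and $\rk E'<\rk F$ we have $\mu_H(E')\le\mu_H(F)$, hence $c_1(E')\cdot H\le\tfrac{e}{r}(L\cdot H)\le\tfrac{r-1}{r}(L\cdot H)$. Confronted with~\eqref{humptydumpty} applied to the effective representative of $c_1(E')$, this forces $c_1(E')=0$; since $\Pic(X)=\NS(X)$ on a $K3$, we conclude $\det E'\cong\cO_X$. Finally, $U\hookrightarrow H^0(E)\subset H^0(E')$ gives $h^0(E')\ge r$.

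Next I strip off copies of $\cO_X$ from $E'$ inductively. I claim that for each $k\in\{0,1,\dots,e-1\}$ there exists a saturated embedding $\cO_X^k\hookrightarrow E'\subset F$ such that $E^{(k)}:=E'/\cO_X^k$ is torsion-free of rank $e-k$ with $\det E^{(k)}\cong\cO_X$ and $h^0(E^{(k)})\ge r-k$. For the step from $k$ to $k+1$ (valid when $k\le e-2$, so $E^{(k)}$ has rank at least $2$), pick any non-zero $\bar s\in H^0(E^{(k)})$, let $M=\cO_X(D')\subset E^{(k)}$ be its saturation (with $D'$ effective), and let $\hat M\subset E'$ be the preimage of $M$: this is a saturated subsheaf of $F$ of rank $k+1$ with $c_1(\hat M)=D'$. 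Stability of $F$ gives $D'\cdot H\le\tfrac{k+1}{r}(L\cdot H)\le\tfrac{r-1}{r}(L\cdot H)$, and~\eqref{humptydumpty} then forces $D'=0$, so $M=\cO_X$. Since $H^1(\cO_X)=0$ on a $K3$, the extension $0\to\cO_X^k\to\hat M\to\cO_X\to 0$ splits, giving $\hat M\cong\cO_X^{k+1}$, and the cohomology long exact sequence produces $h^0(E^{(k+1)})\ge r-k-1$. At $k=e-1$, however, $E^{(e-1)}$ is a rank-$1$ torsion-free sheaf with $\det\cong\cO_X$, hence of the form $\cI_Z$, so $h^0(E^{(e-1)})\le 1$; this contradicts the inductive bound $h^0(E^{(e-1)})\ge r-e+1\ge 2$. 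Therefore $\Gr(r,H^0(F))_{*}=\Gr(r,H^0(F))$, and~\Ref{prp}{tiroindi} concludes the argument.

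The main obstacle is the determinant-triviality step: ruling out a small but non-zero effective $c_1$ for a saturated destabilizing subsheaf of $F$. The bound $\mu_H(E')\le\mu_H(F)$ coming from $H$-semistability of $F$, pitted against the strict reverse inequality~\eqref{humptydumpty}, is exactly what closes that gap; the same dichotomy then propagates through the induction because the subsheaves $\hat M$ constructed at each stage have rank $k+1\le e\le r-1$, keeping $(k+1)/r\le(r-1)/r$ so that~\eqref{humptydumpty} still applies.
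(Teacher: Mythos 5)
Your proof is correct, and its skeleton is the same as the paper's: show that hypothesis~\eqref{humptydumpty} forces $\Gr(r,H^0(F))_{*}=\Gr(r,H^0(F))$, then quote~\Ref{prp}{tiroindi}, with the engine being the clash between $H$-slope-semistability of $F$ (which caps the $H$-degree of $c_1$ of any subsheaf of rank $\le r-1$ at $\tfrac{r-1}{r}\deg(L\cdot H)$) and~\eqref{humptydumpty}. Where you diverge is in how the degenerate case is killed. The paper stays with $E=\im\varphi^U_F$ itself and rules out $\det E\cong\cO_X$ directly: if the determinant were trivial, the $\ov{r}$ generically independent sections would be independent everywhere, giving $E\cong\cO_X^{\ov r}$ and contradicting $h^0(E)\ge r$; hence $\det E$ is non-zero effective and slope-semistability contradicts~\eqref{humptydumpty} in one stroke. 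You instead pass to the saturation $E'$, let the hypothesis force $\det E'\cong\cO_X$, and then run an induction peeling off trivial summands (using $H^1(\cO_X)=0$ to split the extensions and the hypothesis again at each stage) until a rank-one sheaf $\cI_Z$ with $h^0\ge 2$ gives the contradiction. Your route is longer but each step is elementary, and it sidesteps the paper's (slightly terse) claim that a globally generated torsion-free sheaf with trivial determinant is trivial. One small imprecision: the saturation $M$ of a section inside the torsion-free quotient $E^{(k)}$ need not be a line bundle $\cO_X(D')$ --- saturated rank-one subsheaves of torsion-free sheaves can be of the form $\cI_W(D')$ (e.g.\ $\cI_Z\oplus 0\subset\cI_Z\oplus\cI_W$). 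This is harmless: $c_1(M)=D'$ is still represented by an effective divisor because $M$ has a non-zero section, the slope bound and~\eqref{humptydumpty} still force $D'=0$, and then $M\cong\cI_W$ with $h^0(M)\ge 1$ forces $W=\emptyset$, so $M\cong\cO_X$ as you need.
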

\begin{proof}
Let's prove that  
\begin{equation}\label{stellatutto}
\Gr(r,H^0(F))_{*}=\Gr(r,H^0(F)).
\end{equation}
Let $U\in \Gr(r,H^0(F))$. Let $E\subset F$ be the image of the map 
$\varphi^U_F\colon U\otimes \cO_X\to F$: we must prove that it is a sheaf of rank $r$. Suppose that the rank of $E$ is $\ov{r}<r$. Since $E$ is globally generated $\det E$ is effective. We claim that $\det E$ is not trivial. In fact supppose the contrary. Let $\sigma_1,\ldots,\sigma_{\ov{r}}\in U$ be linearly independent at the generic point of $X$: then $\sigma_1,\ldots,\sigma_{\ov{r}}$ are  linearly independent everywhere because $\det E$ is trivial. Thus $E\cong\cO_X^{\ov{r}}$: that is absurd because $h^0(E)\ge r>\ov{r}=h^0(\cO_X^{\ov{r}})$. Let  $C\in |\det E|$. 
The sheaf $F$ is $H$-stable and hence $H$-slope-semistable. Thus
\begin{equation*}
\mu_H(E)=\frac{\deg(C \cdot H)}{\ov{r}}\le \mu_H(F)=\frac{\deg(L\cdot H)}{r}.
\end{equation*}
Since $C$ is non-zero effective and $\ov{r}<r$ that contradicts our hypothesis. We have proved that~\eqref{stellatutto} holds.  
By~\Ref{prp}{tiroindi}   it follows that   $c_2(F)\in S_{d(v)}(X)$. 
\end{proof}
\section{Primitive determinant}\label{sec:supposta}
\setcounter{equation}{0}
\begin{prp}\label{prp:picari}
Let $X$ be a $K3$ surface and $H$ an ample {\bf primitive} divisor on $X$ i.e.~$h:=c_1^{\rm hom}(\cO_X(H))$ is a primitive class. Let
\begin{equation*}
v:=(r,h,s),\qquad r>0,\quad s\ge 0.
\end{equation*}
Suppose that $\gM_v^{\rm st}(X,H)$ is not empty. Then
\begin{equation}
\{c_2(F) \mid [F]\in\ov{\gM}_v(X,H)^{\rm st} \}=\{\gz\in S_{d(v)}(X) \mid \deg\gz=c_2(v)\}.
\end{equation}
\end{prp}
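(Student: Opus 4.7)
By \Ref{prp}{bastacont}, it suffices to show that $c_2(F)\in S_{d(v)}(X)$ for every $[F]\in\gM_v^{\rm st}(X,H)$. Fix such $F$. Since $L:=\cO_X(H)$ is ample and $s\ge 0$, the setup of \Ref{sec}{magari} is in force with $\ell=h$. The plan is to verify that $\Gr(r,H^0(F))_{*}=\Gr(r,H^0(F))$ (i.e.\ every $r$-dimensional subspace $U\subset H^0(F)$ gives a generically injective map $U\otimes\cO_X\to F$) and then to appeal to \Ref{prp}{tiroindi}, which delivers $c_2(F)\in S_{d(v)}(X)$.

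When $\Pic(X)=\ZZ h$, the hypothesis of \Ref{prp}{verdone} holds trivially, since every effective non-zero $C$ has class $ah$ with $a\ge 1$, and so $C\cdot H=aH^2\ge H^2>\frac{r-1}{r}H^2$. In higher Picard rank that hypothesis can fail, and the \lq\lq extra work\rq\rq{} alluded to in the introduction must use primitivity of $h$ more delicately. Suppose toward contradiction that some $U$ has image $E\subset F$ of rank $\bar r<r$. Arguing as in the proof of \Ref{prp}{verdone}, $E$ is globally generated with non-trivial effective determinant $\cO_X(C)$, and $H$-slope-semistability of $F$ gives $r\,C\cdot H\le\bar r\,H^2$. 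Let $E'\subset F$ be the saturation of $E$; then $c_1(E')$ is effective, $c_1(E')\cdot H\le \bar r H^2/r$, and $c_1(E')+c_1(F/E')=h$.

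The heart of the matter is upgrading this to the identity $c_1(E')/\bar r=h/r$ in $\NS(X)$: once established, primitivity of $h$ forces $r\mid\bar r$, contradicting $0<\bar r<r$. A natural route is to perturb $H$ to a nearby $v$-generic ample divisor $H'$ in a $v$-chamber whose closure contains $H$; if one can arrange that $F$ (or a suitable limit) remains $H'$-slope-semistable with $E'$ still destabilizing, \Ref{prp}{camere}(1) yields $c_1(E')/\bar r=h/r$ at once. Alternatively, one can work directly with $\alpha:=r\,c_1(E')-\bar r\,h\in\NS(X)$, which satisfies $\alpha\cdot H\le 0$, and invoke the Hodge index theorem together with the global generation of $E$ (and, if needed, the Bogomolov inequality for $F/E'$) to force $\alpha=0$. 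I expect this perturbation / Hodge-index step to be the main obstacle. Once $\Gr(r,H^0(F))_{*}=\Gr(r,H^0(F))$ is verified, \Ref{prp}{tiroindi} delivers $c_2(F)\in S_{d(v)}(X)$, completing the proof.
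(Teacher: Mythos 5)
Your reduction via \Ref{prp}{bastacont}, and your observation that the hypothesis of \Ref{prp}{verdone} holds automatically when $\Pic(X)=\ZZ h$, are both correct. But the step you flag as ``the heart of the matter'' is not merely the main obstacle --- it is unprovable, because the statement you are trying to verify is false in higher Picard rank. The equality $\Gr(r,H^0(F))_{*}=\Gr(r,H^0(F))$ can genuinely fail for an $H$-stable $F$ with $v(F)=(r,h,s)$, $h$ primitive: take $X$ carrying an elliptic pencil $|C|$ with $C\cdot H$ small compared to $\deg(H\cdot H)$, and a stable rank-$2$ sheaf $F$ obtained by a Serre-type extension of $\cI_Z\otimes\cO_X(H-C)$ by $\cO_X(C)$ with $Z$ general of suitable length (the numerics allow $s\ge 0$ when $\deg(H\cdot H)$ is large). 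Stability only requires $\mu_H(\cO_X(C))<\mu_H(F)$, which holds, yet the $2$-dimensional subspace $U=H^0(\cO_X(C))\subset H^0(F)$ has image of rank $1$. In this situation $\alpha=r\,c_1(E')-\bar{r}\,h\neq 0$ and no contradiction exists. Accordingly, neither of your fallback arguments can close the gap: Item~(1) of \Ref{prp}{camere} applies to a \emph{properly slope-semistable} sheaf with a subsheaf of \emph{equal} slope, whereas here $\mu_H(E')$ may be strictly smaller than $\mu_H(F)$, and no small perturbation of $H$ turns a strict inequality into an equality; and the Hodge index theorem gives nothing, since $\alpha\cdot H\le 0$ does not constrain the sign of $\alpha^2$ (write $\alpha=cH+\beta$ with $\beta\cdot H=0$; then $\alpha^2=c^2\deg(H\cdot H)+\beta^2$ can have either sign).

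The missing idea is to abandon the fixed surface and deform. The paper's proof works in the moduli space $T_g$ of polarized $K3$'s of genus $g=g(H)$: the locus $T_g(v)^0$ where the hypothesis of \Ref{prp}{verdone} holds for $(X_t,M_t,H_t)$ is open and dense, because a nonzero effective $C$ violating~\eqref{humptydumpty} has class outside $\QQ h_t$ (by primitivity of $h_t$) and bounded $H_t$-degree, which confines $t$ to a proper closed Noether--Lefschetz-type subset; on $T_g(v)^0$ the stable locus is nonempty by \Ref{thm}{modce}, and \Ref{prp}{verdone} together with \Ref{prp}{bastacont} settle those fibers. For the given $X=X_{t_0}$ one then uses $\Ext^2(F_0,F_0)^0=0$ to extend any stable $F_0$ sideways to a flat family over a pointed curve $(S,s_0)\to T_g$ meeting $T_g(v)^0$ away from $s_0$, so that $c_2$ of the nearby fibers lies in the corresponding $S_{d(v)}$, and the specialization result \Ref{clm}{specializzo} transports this to $c_2(F_0)\in S_{d(v)}(X)$; one concludes with \Ref{prp}{bastacont}. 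Without this deformation-and-specialization mechanism, or a substitute for it, your outline cannot be completed, precisely because the pointwise Grassmannian statement you aim for is false on special surfaces.
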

\begin{proof}
 Let $g:=g(H)$ i.e.~$2g-2=\deg(H\cdot H)$. We will freely use notation introduced in~\Ref{subsec}{curvenodate}, in particular  $\pi\colon\cX\to T_g$ is a complete family of $K3$ surfaces with a polarization of degree $2g-2$. Thus there exists $\ov{t}\in T_g$ such that $(X_{\ov{t}},M_{\ov{t}})\cong(X,\cO_X(H))$.  
Let $t\in T_g$: we let 
\begin{equation*}
h_t:=c_1^{\rm hom}(M_t),\qquad v_t:=(r,h_t,s).
\end{equation*}
 Let $H_t\in |M_t|$. We will prove that if 
 $\gM_{v_t}^{\rm st}(X_t,H_t)$ is not empty then
\begin{equation}\label{borini}
\{c_2(F) \mid [F]\in\ov{\gM}_{v_t}(X_t,H_t)^{\rm st} \}=\{\gz\in S_{d(v_t)}(X_t) \mid \deg\gz=c_2(v_t)\}.
\end{equation}
Let $T_g(v)\subset T_g$ be the subset parametrizing $X_t$ such that $H_t$ is $v_t$-generic: then  $T_g(v)$ is  open dense in $T_g$. By~\Ref{thm}{modce} we get that if  $t\in T_g(v)$ then $\gM_{v_t}(X_t,H_t)^{\rm st}$ is not empty.  Let  $T_g(v)^0\subset T_g(v)$ be the  subset parametrizing $X_t$ such that the hypothesis of~\Ref{prp}{verdone} holds for $X=X_t$, $L=M_t$ and $H=H_t$. 
We claim that $T_g(v)^0$  is  open dense in $T_g$. In fact suppose that  there exists a non-zero effective divisor $C$ on $X_t$ violating~\eqref{humptydumpty}. Since $h_t$ is primitive it follows that  $c_1^{\rm hom}(\cO_{X_t}(C))$ does not belong to the $\QQ$-span of $h_t$, in particular the Picard number of $X_t$ is at least $2$. On the other hand $\deg(C\cdot H_t)$ is bounded above because~\eqref{humptydumpty} is violated: that implies that $t$ belongs to a proper closed subset of $T_g(v)$. Let $t\in T_g(v)^0$.  By~\Ref{prp}{verdone} we get that $c_2(F)\in S_{d(v_t)}(X_t)$ for all $[F]\in\ov{\gM}_{v_t}(X_t,H_t)^{\rm st}$. By~\Ref{prp}{bastacont} we get that~\eqref{borini} holds. Now let $t_0\in T_g$ be an arbitrary point such that $\ov{\gM}_{v_t}(X_t,H_t)^{\rm st}$ is not empty. Let $[F_0]\in \gM_{v_t}(X_t,H_t)^{\rm st}$. Then $\Ext^2(F_0,F_0)^0=0$ and hence $F_0$ extends sideways over the family $T_g$. It follows that there exist an irreducible pointed curve $(S,s_0)$, a map $f\colon S\to T_g$ such that 
\begin{equation}\label{ipad}
f(s_0)=t_0,\qquad f(S\setminus\{s_0\})\subset T_g(v)^0
\end{equation}
 and a (coherent) sheaf $\cF$ on $\cX_S:=S\times_{T_g}\cX$ flat over $S$ such that
\begin{enumerate}
\item[(1)]
$\cF|_{X_{s_0}}$ (for  $s\in S$ we let $X_s$ be the fiber of $\cX_S$ over $s$) is isomorphic to $F_0$ - this makes sense because $X_{s_0}=X_{t_0}$.
\item[(2)]
If $s\in S$ then $\cF|_{X_{s}}$ is an $H_s$-stable (for  $s\in S$ we identify $X_s\cong X_{f(s)}$ and we let $H_s:=H_{f(s)}$) torsion-free sheaf and hence its isomorphism class belongs to $\gM_{v_s}(X_s,H_s)^{\rm st}$ (here $v_s:=v_{f(s)}$).
\end{enumerate}
Let $s\in(S\setminus\{s_0\})$. By~\eqref{ipad} and the result  proved above we have that $c_2(\cF|_{X_{s}})\in S_{d(v_s)}(X_s)$. 
By~\Ref{clm}{specializzo} it follows that $c_2(\cF|_{X_{s_0}})\in S_{d(v_{s_0})}(X_{s_0})$ as well. This proves that if $t=t_0$ then the left-hand side of~\eqref{borini} is contained in the right-hand side: by~\Ref{prp}{bastacont} we get that the two sides of~\eqref{borini} are equal.
\end{proof}
The following result will be handy when we will deal with $K3$ surfaces whose Picard number is larger than $1$. 
\begin{lmm}\label{lmm:cambiopol}
Let $X$ be a $K3$ surface with $\rho(X)\ge 2$.  Let 
\begin{equation*}
v=(r,\ell,s)\in H^{\bullet}(X;\ZZ)
\end{equation*}
be a Mukai vector with $r\ge 1$ and such that $r$ and the divisibility of $\ell$ are coprime. 
Let $\cC\subset\Amp(X)_{\RR}$ be an open $v$-camber. There exists an integral $h\in\cC$ with the property that there is an infinite set of $y\in\NN$ such that $(\ell+ryh)$ is primitive.
\end{lmm}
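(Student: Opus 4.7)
The plan is to choose an integral, primitive class $h\in\cC$ that is $\QQ$-linearly independent from $\ell$ in $\NS(X)$, and then carry out an elementary divisibility analysis to show that $\ell+ryh$ is primitive for a positive density of $y\in\NN$.

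\textbf{Choice of $h$.} Since $\cC$ is a non-empty open cone in $\NS(X)_{\RR}$ and $\rho(X)\ge 2$, the set $\cC\setminus\RR\ell$ is open and non-empty in $\NS(X)_{\RR}$, and therefore contains integral classes of $\NS(X)$. Dividing any such class by its divisibility produces an integral primitive ample class $h\in\cC$ which is $\QQ$-linearly independent from $\ell$.

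\textbf{Local analysis.} Let $P\subset\NS(X)$ be the saturation of $\ZZ h+\ZZ\ell$; this is a rank-$2$ primitive sublattice, so any class $\alpha\in P$ lies in $p\cdot\NS(X)$ if and only if it lies in $pP$. Fix a $\ZZ$-basis $(e_1,e_2)$ of $P$, write $h=a_1e_1+a_2e_2$ and $\ell=b_1e_1+b_2e_2$, and set $\Delta:=a_1b_2-a_2b_1$; then $\Delta\ne 0$ by the $\QQ$-linear independence of $h$ and $\ell$. For a prime $p$, we analyze when $\ell+ryh\equiv 0$ in $P/pP\cong\FF_p^2$:
\begin{enumerate}
\item[(a)] If $p\mid r$, then $\ell+ryh\equiv\ell\pmod p$; the coprimality hypothesis gives $p\nmid d$, where $d$ is the divisibility of $\ell$, and hence $\ell\not\equiv 0$ in $P/pP$. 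Thus $\ell+ryh\notin p\cdot\NS(X)$ for every $y$.
\item[(b)] If $p\nmid r$ and $p\nmid\Delta$, then $h$ and $\ell$ are linearly independent in $P/pP$, so the congruence $\ell+ryh\equiv 0$ admits no solution in $y$.
\item[(c)] If $p\nmid r$ and $p\mid\Delta$, then $h$ and $\ell$ are proportional in $P/pP$; since $h$ is primitive and $P$ saturated, $h\not\equiv 0\pmod p$, so we may write $\ell\equiv ch\pmod p$ for some $c\in\FF_p$. The equation $\ell+ryh\equiv 0$ becomes $(c+ry)h\equiv 0$, which has the unique solution $y\equiv -c/r\pmod p$ since $r$ is invertible mod $p$.
\end{enumerate}

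\textbf{Conclusion and main obstacle.} The primes falling under case (c) are precisely the prime divisors of $\Delta$, and so form a finite set $\{p_1,\ldots,p_k\}$; for each of them at most one residue class modulo $p_i$ is bad. By the Chinese remainder theorem the set of $y\in\NN$ avoiding all these bad residues has density $\prod_{i=1}^{k}(1-1/p_i)>0$ and is thus infinite; for any such $y$ the class $\ell+ryh$ is non-zero (since $h,\ell$ are $\QQ$-linearly independent and $ry\neq 0$) and lies in no $p\cdot\NS(X)$, hence is primitive. The main subtlety is the need to take $h$ primitive: this excludes the degenerate situation in case (c) where both $h$ and $\ell$ vanish modulo a common prime in $P/pP$, which would force $p$ to divide the divisibility of $\ell+ryh$ for every $y$ and thereby wreck the argument.
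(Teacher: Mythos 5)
Your proof is correct, and it takes a genuinely different route from the paper's. Both arguments begin identically, by choosing a primitive integral $h\in\cC$ whose span with $\ell$ has rank $2$ (using $\rho(X)\ge 2$ and the fact that $v$-chambers are cones). But the paper then uses the cone property a second time: writing $\ell=m\ell_1$ with $\ell_1$ primitive, completing $\ell_1$ to a $\ZZ$-basis of $H^2(X;\ZZ)$, writing $h=\sum_i a_i\ell_i$ and setting $\gamma:=\gcd(a_2,\ldots,a_{22})$, it perturbs the ray spanned by $h$ so as to force $\gcd(m,\gamma)=1$, and then proves primitivity of $\ell+ryh$ precisely for $y$ in the arithmetic progression $y=\gamma y_0$, $\gcd(y_0,m)=1$, via a contradiction argument with a single auxiliary prime. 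You avoid any perturbation of $h$: your prime-by-prime analysis in the saturated rank-two sublattice $P$ shows that for an \emph{arbitrary} primitive $h\in\cC\setminus\RR\ell$ the only obstructions are the finitely many primes dividing $\Delta$ and prime to $r$, each excluding a single residue class of $y$, after which the Chinese remainder theorem yields a positive-density set of good $y$. What the paper's route buys is an explicit family of good $y$ with no need for saturation or a case analysis over primes; what yours buys is robustness (no adjustment of $h$), a quantitative conclusion (positive density rather than mere infinitude), and a transparent identification of the true obstruction, namely the prime divisors of $\Delta$. One small point of rigor: both your claim that $\cC\setminus\RR\ell$ contains integral classes and your claim that dividing such a class by its divisibility keeps it in $\cC$ rest on $\cC$ being a cone (rational points are dense in the open set and can be rescaled into the lattice); you state the cone property at the outset, but it should be invoked explicitly at exactly those two steps, just as the paper does when it rescales $h$.
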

\begin{proof}
Write $\ell=m\ell_1$ where $m\in\NN_{+}$ and $\ell_1\in H^2(X;\ZZ)$ is primitive. Complete $\ell_1$ to a $\ZZ$-basis $\{\ell_1,\ldots,\ell_{22}\}$ of $H^2(X;\ZZ)$. Let $h\in\cC$ be integral, primitive and such that  $\la\ell,h\ra$ has rank $2$ - there exists such $h$ because   $\rho(X)\ge 2$. 
Write $h=\sum_{i=1}^{22}a_i\ell_i$ where $a_i\in\ZZ$. Since $\la\ell,h\ra$ has rank $2$ there exists $2\le i\le 22$ such that $a_i\not=0$: thus it makes sense to let
\begin{equation*}
\gamma:=\gcd(a_2,\ldots,a_{22}).
\end{equation*}
Since open $v$-chambers are cones we may assume (changing slightly the ray spanned by $h$) that 
\begin{equation}\label{uovadoro}
\gcd(m,\gamma)=1.
\end{equation}
Let
\begin{equation}\label{fagiolimagici}
y=\gamma\cdot y_0,\qquad \gcd(y_0,m)=1.
\end{equation}
We will prove that $(\ell+ryh)$ is primitive. The proof is by contradiction. We have $\ell+ryh=(m+ry a_1)\ell_1+ry a_2\ell_2+\ldots+ ry a_{22}\ell_{22}$.  Suppose that $p$ is a prime dividing   $\ell+ryh$. Since  $\gcd(m,r)=1$ we get that $p\centernot\mid r$.  By~\eqref{uovadoro} and~\eqref{fagiolimagici}  we also get  that $p\centernot \mid y$. Since $p\mid(\ell+ryh)$ it follows that 
\begin{equation}\label{dividetimpera}
p\mid\gamma. 
\end{equation}
We also have that $p\mid(m+rya_1)$. Since $h$ is primitive $\gcd(a_1,\gamma)=1$ and hence $p\centernot \mid a_1$. It follows that  $y\equiv -r^{-1} a_1^{-1} m\pmod{p}$. By~\eqref{uovadoro}   we have that $m\not\equiv 0\pmod{p}$ and hence $y\not\equiv 0\pmod{p}$.     On the other hand~\eqref{fagiolimagici} and~\eqref{dividetimpera} give that $y\equiv 0\pmod{p}$: that is a contradiction. We have proved that if~\eqref{fagiolimagici} holds then $(\ell+ryh)$ is primitive. That proves the lemma because there is an infinite set of $y\in\NN$ such that~\eqref{fagiolimagici} holds. 
\end{proof}
\begin{prp}\label{prp:dirunio}
Let $X$ be a $K3$ surface with $\rho(X)\ge 2$.  Let 
\begin{equation*}
v=(r,\ell,s)\in H^{\bullet}(X;\ZZ)
\end{equation*}
be a Mukai vector with $r\ge 1$ and such that $r$ and the divisibility of $\ell$ are coprime. 
Let $H$ be a $v$-generic ample divisor on $X$. 
Then
\begin{equation}\label{natavota}
\{c_2(F) \mid [F]\in\ov{\gM}_v(X,H)^{\rm st} \}=\{\gz\in S_{d(v)}(X) \mid \deg\gz=c_2(v)\}.
\end{equation}
\end{prp}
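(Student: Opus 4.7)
The strategy is to reduce~\Ref{prp}{dirunio} to~\Ref{prp}{picari} by twisting $v$ with a suitable power of a line bundle, so that the new Mukai vector has \emph{primitive} first Chern class and can be attacked directly by picari. The crucial observation is that the invariant $|v|=r^2 v^2/4+r^4/2$ depends only on $r$ and $v^2$, both of which are preserved under tensoring by any line bundle. Consequently, for $v':=v\cdot \ch(L)$ any twist of $v$, the $v$-walls and $v'$-walls of $\Amp(X)_{\RR}$ coincide, and hence so do the open chambers.

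First I apply~\Ref{lmm}{cambiopol} to the open $v$-chamber $\cC$ containing $[H]$ to obtain an integral $h\in\cC$ and an infinite set $Y\subset\NN$ of exponents $y$ such that $\ell+ryh$ is primitive. Let $\cL_h$ be a line bundle with $c_1(\cL_h)=h$. I then pick $y\in Y$ so large that (i) the class $\ell+ryh$ actually lies in $\cC$ (this holds for $y\gg 0$ because $(\ell+ryh)/(ry)\to h\in\cC$ and $\cC$ is an open cone), and (ii) $s':=s+y\,\ell\cdot h+ry^2 h^2/2\ge 0$ (immediate, since $h$ is ample, so $h^2>0$). Then $\ell+ryh$ is both primitive and ample, and Riemann-Roch together with Kodaira vanishing yields an ample divisor $H'$ with $c_1^{\rm hom}(\cO_X(H'))=\ell+ryh$; in particular $H'$ is primitive.

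Set $v':=v\cdot\ch(\cL_h^{\otimes y})=(r,\ell+ryh,s')$ and note that $v'^2=v^2$, whence $d(v')=d(v)$. Tensoring by $\cL_h^{\otimes y}$ is an isomorphism $\gM_v(X,H)\overset{\sim}{\lra}\gM_{v'}(X,H)$ that preserves stability and carries $\ov{\gM}_v(X,H)^{\rm st}$ onto $\ov{\gM}_{v'}(X,H)^{\rm st}$. By the opening observation the class $[H']$ lies in $\cC$, which is also the open $v'$-chamber of $[H]$, so~\Ref{crl}{camere} provides a further isomorphism $\gM_{v'}(X,H)\cong\gM_{v'}(X,H')$; in particular $\gM_{v'}^{\rm st}(X,H')$ is nonempty, and~\Ref{prp}{picari} then describes $\{c_2(G)\mid [G]\in\ov{\gM}_{v'}(X,H')^{\rm st}\}$ as $\{\gz\in S_{d(v)}(X)\mid\deg\gz=c_2(v')\}$. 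Formula~\eqref{tenscidue} together with~\eqref{interseco} shows that $c_2(F\otimes\cL_h^{\otimes y})-c_2(F)\in\ZZ c_X\subset S_{d(v)}(X)$, so membership in $S_{d(v)}(X)$ is preserved by the twist; together with the corresponding degree calculation this transfers picari's conclusion back to $v$ and $H$. Finally,~\Ref{prp}{bastacont} upgrades the resulting inclusion to the full equality~\eqref{natavota}.

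The main obstacle is matching polarizations: tensoring preserves stability only with respect to a \emph{fixed} polarization, while~\Ref{prp}{picari} requires the polarization to be the primitive divisor $H'$ whose class is the middle entry of $v'$. This is precisely the issue resolved by the opening remark about $|v|$ being invariant under twisting; once that is known, the chamber decompositions for $v$ and $v'$ coincide, and~\Ref{crl}{camere} is enough to bridge $H$ and $H'$ cleanly.
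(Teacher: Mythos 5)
Your proposal follows the same route as the paper's proof: apply \Ref{lmm}{cambiopol} inside the chamber of $[H]$, twist by a large multiple of $h$ to make the middle entry primitive and the last entry nonnegative, invoke \Ref{prp}{picari} for the twisted vector with the matching primitive polarization, and transfer back via \eqref{tenscidue}, \eqref{interseco} and \Ref{prp}{bastacont}. Your opening observation --- that $|v|$ depends only on $r$ and $v^2$, so the $v$-walls and $v'$-walls coincide --- is correct, is needed, and is in fact the point the paper leaves implicit when it juggles polarizations. However, two steps are asserted without justification, and both are genuine gaps.

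First, you claim that tensoring by $\cL_h^{\otimes y}$ gives an isomorphism $\gM_v(X,H)\overset{\sim}{\lra}\gM_{v'}(X,H)$ preserving stability, with the \emph{original} polarization $H$. Since $h$ need not be proportional to $[H]$, this is a twist by a line bundle that is not a power of the polarization, and Gieseker $H$-stability is \emph{not} preserved by such twists in general: by \eqref{hilbnorm} the slope comparison between $E\subset F$ and $F$ is unchanged, but when $\mu_H(E)=\mu_H(F)$ the comparison of the constant terms $\chi/\rk$ shifts by $y\bigl(c_1(E)/\rk(E)-c_1(F)/\rk(F)\bigr)\cdot h$, which can reverse a strict inequality. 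The claim does hold in your setting, but only because $H$ is $v$-generic: Item (1) of \Ref{prp}{camere} forces $c_1(E)/\rk(E)=c_1(F)/\rk(F)$ whenever the slopes are equal, so the shift vanishes (and the inverse twist needs the same argument for $v'$, which uses your wall-coincidence remark). You must either supply this argument or reorder the steps as the paper does: first replace $H$ by a polarization whose class is $h$ itself (legitimate by \Ref{crl}{camere}, same chamber), after which the twist is by a power of the polarization and \Ref{prp}{tensinv} applies verbatim.

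Second, nonemptiness. Both \Ref{prp}{picari} and \Ref{prp}{bastacont} carry the hypothesis that the stable locus is nonempty, and \eqref{natavota} is simply false for an empty moduli space, since its right-hand side is not empty. Your phrase ``in particular $\gM_{v'}^{\rm st}(X,H')$ is nonempty'' is circular: it rests on the nonemptiness of $\gM_v(X,H)^{\rm st}$, which is not a hypothesis of \Ref{prp}{dirunio} and is never established in your argument. The paper's very first line supplies it via \Ref{thm}{modce}; alternatively you could apply \Ref{thm}{modce} directly to $(v',H')$, where the coprimality hypothesis is automatic because $\ell+ryh$ is primitive and $H'$ is $v'$-generic. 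Either fix is one line, but as written your proof never invokes any existence theorem.
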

\begin{proof}
The moduli space $\gM_v(X,H)^{\rm st}$ is not empty by~\Ref{thm}{modce}.  
Let $\cC\subset\Amp(X)_{\RR}$ be the open $v$-chamber containing $h:=c_1(\cO_X(H))$. By~\Ref{crl}{camere} we may replace $h$ by any integral element of $\cC$: thus we may suppose that $h$ is as in~\Ref{lmm}{cambiopol}.  Let  $y\in\NN$  be as in~\Ref{lmm}{cambiopol}; we will suppose in addition that it is very large. Then  $(\ell+ryh)$ is primitive,  ample and it belongs to the open $v$-chamber $\cC$. Thus we may assume that $c_1(\cO_X(H))=(\ell+ryh)$.   Let 
\begin{equation*}
w:=v\cdot \ch(\cO_X(yH))=(r,\ell+ryh,s+y\ell\cdot h+\frac{r}{2}y^2 h\cdot h). 
\end{equation*}
 The last entry of $w$ is positive because $y$ is very large. Thus the hypotheses of~\Ref{prp}{picari} hold with $v$ replaced by $w$ and hence~\eqref{natavota} holds with $v$ replaced by $w$.
By~\Ref{prp}{tensinv} we get that~\eqref{natavota} holds. 
\end{proof}
\begin{rmk}
Let $F$ be a rigid vector bundle  (a.k.a.~spherical vector-bundle) on a $K3$ surface $X$. An arbitrary (small) deformation of $X$ will carry  a rigid vector-bundle which is a  deformation of $F$, moreover the  deformed bundle will be (generically) stable - that follows from Proposition~3.14 of~\cite{muktata}. Starting from this fact and arguing as in the proof of~\Ref{prp}{picari} and~\Ref{prp}{dirunio} one  may reprove 
 Proposition~3.2 of~\cite{huybrechts} - one must notice that if $v=(r,\ell,s)$ is a Mukai vector with $v^2=-2$ then $s\ge 0$ and $r$ is coprime to the divisibility of $\ell$.  
\end{rmk}
\section{Rank two}\label{sec:coppiette}
\setcounter{equation}{0}
In the present section we will prove  the following result.
\begin{prp}\label{prp:rangodue}
Let $X$ be a $K3$ surface. Let $v=(2,\ell,s)\in H^{\bullet}(X,\ZZ)$ be a Mukai vector (notice: $r=2$) and suppose that the following hold:
\begin{enumerate}
\item[(1)]
$-2\le v^2$.
\item[(2)]
There does not exist $\ell_0\in H^{1,1}_{\ZZ}(X)$such that  $v=(2,2\ell_0,\ell_0\cdot\ell_0)$.
\end{enumerate}
 Let  $H$ be a $v$-generic  ample divisor on $X$.  Then
\begin{equation}\label{iamme}
\{c_2(F) \mid [F]\in\ov{\gM}_v(X,H)^{\rm st} \}=\{\gz\in S_{d(v)}(X) \mid \deg\gz=c_2(v)\}.
\end{equation}
\end{prp}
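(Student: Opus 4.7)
The plan is to extend the degeneracy-locus strategy of Section~\Ref{sec}{magari} to the rank-$2$ case, where the map $\lambda_F$ is in general only a rational map because line subsheaves of $F$ with at least two global sections produce indeterminacy.

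By~\Ref{prp}{tensinv} and~\Ref{crl}{camere} I may replace $v$ by $w:=v\cdot\ch(\cO_X(yH))$ for any integer $y$ and replace $H$ by any integral ample class in the same open $v$-chamber without changing either side of~\eqref{iamme}. For $y\gg 0$ this puts me in the setup of Section~\Ref{sec}{magari}: $\ell=c_1^{\rm hom}(L)$ for an ample line bundle $L$ and $s\ge 0$. The excluded form $(2,2\ell_0,\ell_0\cdot\ell_0)$ is preserved under these operations, so I may assume $v$ still avoids it. By~\Ref{prp}{bastacont} it suffices to prove $c_2(F)\in S_{d(v)}(X)$ for every $[F]\in\gM_v(X,H)^{\rm st}$.

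If the divisibility of $\ell$ is odd then $r=2$ is coprime to it, and~\eqref{iamme} follows from~\Ref{prp}{picari} (when $\rho(X)=1$, after a further twist making $\ell$ primitive) or from~\Ref{prp}{dirunio} (when $\rho(X)\ge 2$). The substantive case is therefore $\ell=2\ell_0$.

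Fix such an $[F]\in\gM_v(X,H)^{\rm st}$. As in Section~\Ref{sec}{magari}, Serre duality and $H$-stability give $h^0(F)\ge\chi(F)=2+s$. The indeterminacy locus of $\lambda_F\colon\Gr(2,H^0(F))\dashrightarrow |L|$ is exactly the union $B:=\bigcup_M\Gr(2,H^0(M))$ over saturated rank-$1$ subsheaves $M\subset F$ with $h^0(M)\ge 2$; by $H$-stability $\mu_H(M)<\mu_H(F)$, and combined with the $v$-genericity of $H$ this confines $M$ to finitely many saturation classes, each with $H^0(M)\subset H^0(F)$ of controlled dimension. I would then blow up $\Gr(2,H^0(F))$ along $B$, extending $\lambda_F$ to a morphism $\wt{\lambda}_F\colon\wt{G}\to |L|$. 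By~\Ref{lmm}{tiroindi} the pull-back of the hyperplane class on $|L|$ is the Pl\"ucker hyperplane class modified by an effective combination of exceptional divisors; carrying out this bookkeeping, the aim is to deduce $\dim\im(\wt{\lambda}_F)\ge 2s = \dim|L|-d(v)$. Combining this with the lower bound $\dim\Sigma_{d(v)}(X,L)\ge d(v)$ from~\eqref{almenogizero} produces a curve $C\in\im(\wt{\lambda}_F)\cap\Sigma_{d(v)}(X,L)$, and picking $U\in \Gr(2,H^0(F))_*$ with $C_F^U=C$ yields the exact sequence $0\to U\otimes\cO_X\to F\to \iota_*\xi\to 0$ with $\xi$ supported on $C$. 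Then~\Ref{clm}{spingo} and~\Ref{prp}{anakin} give $c_2(F)=-c_2(\iota_*\xi)\in S_{d(v)}(X)$, which together with~\Ref{prp}{bastacont} completes the proof.

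The main obstacle is the dimension estimate $\dim\im(\wt{\lambda}_F)\ge 2s$. The Pl\"ucker contribution alone gives $\dim\Gr(2,H^0(F))\ge 2s$, but the blow-up corrections along the sub-Grassmannians $\Gr(2,H^0(M))$ can in principle reduce the image. The role of the exclusion $v\neq(2,2\ell_0,\ell_0\cdot\ell_0)$ is precisely to rule out the degenerate configuration in which these corrections collapse the image below $2s$; making this precise will require bounding $h^0(M)$ for each numerical class of saturated rank-$1$ subsheaf $M\subset F$ by means of Riemann-Roch, Kodaira vanishing and the strict slope inequality $\mu_H(M)<\mu_H(F)$.
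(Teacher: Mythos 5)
Your proposal does not close: the step you yourself label ``the main obstacle'' --- the estimate $\dim\im(\wt{\lambda}_F)\ge 2s$ after blowing up the indeterminacy locus of $\lambda_F$ --- is precisely the hard point, and it is left unproved. Nothing in the proposal shows that bounding $h^0(M)$ for saturated rank-$1$ subsheaves $M\subset F$ via Riemann--Roch, Kodaira vanishing and the slope inequality controls the loss of image dimension along the exceptional divisors; indeed the paper explicitly declines to follow this route (end of the Introduction: resolving the indeterminacies of $\lambda_F$ ``might be quite difficult'', and the rank-two case is handled by \emph{circumventing} that problem). There are also two concrete errors. First, you assert that hypothesis (2), $v\neq(2,2\ell_0,\ell_0\cdot\ell_0)$, is what prevents the blow-up corrections from collapsing the image: by Item (1) of \Ref{prp}{kodzero} that hypothesis is exactly the non-emptiness criterion for $\gM_v^{\rm st}(X,H)$, and it enters nowhere else. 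Second, your reduction of the odd-divisibility case with $\rho(X)=1$ to \Ref{prp}{picari} fails: twisting replaces $\ell$ by $\ell+2yh$, which in general cannot be made primitive, and even when it can, the required twist is essentially forced and typically produces $s<0$, violating the hypothesis $s\ge 0$ of \Ref{prp}{picari}; so for $\ell$ an odd multiple $m>1$ of the ample generator you have no argument.

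The paper's proof supplies the ideas your proposal lacks. One first deforms to a polarized $K3$ generic in moduli so that \Ref{hyp}{comodo} holds (every effective divisor of low $H$-degree is a power of $L_0$), and then argues by induction on the divisibility $m$ of $\ell$ (\Ref{prp}{casochiave}): either the generic $F$ satisfies $\Gr(2,H^0(F))_{*}=\Gr(2,H^0(F))$, and \Ref{prp}{tiroindi} applies with no blow-up at all, or the generic $F$ contains some $L_0^{a}$, $a>0$, with two sections, hence sits in $0\to L_0^a\to F\to\cI_Z\otimes L_0^{m-a}\to 0$. In the second case one twists by $L_0^{-a}$ and invokes the decisive input absent from your proposal: by Item (3) of \Ref{prp}{kodzero} the moduli space has Kodaira dimension $0$, hence is not uniruled, so it cannot be dominated by a projective bundle of extensions $0\to\cO_X\to G\to\cI_Z\otimes L'\to 0$ unless $2+s_a>0$ (\Ref{prp}{contopar}, \Ref{crl}{contopar}). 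This forces $s_a\ge -1$; the boundary case $s_a=-1$ is settled by the explicit birational map $X^{[n]}\dra\gM^{\rm st}_{v_a}(X,H)$ of \Ref{prp}{essemeno} and \Ref{crl}{essemeno}, while $s_a\ge 0$ is the inductive step since $m-2a<m$. Finally \Ref{clm}{specializzo} specializes the conclusion from the generic polarized $K3$ back to an arbitrary one, and \Ref{rmk}{tuttising} disposes of the vector $(2,2\ell_0,\ell_0\cdot\ell_0-1)$, for which the generic sheaf is not locally free and your invertibility argument for the image sheaf $\xi$ would also break down. Without these ingredients --- or an actual proof of your dimension estimate --- the argument is incomplete at its crux.
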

We start by collecting together a few results taken from  the existing literature on moduli of sheaves on $K3$ surfaces.
\begin{prp}\label{prp:kodzero}
Let $X$ be a $K3$ surface. Let $v=(2,\ell,s)\in H^{\bullet}(X,\ZZ)$ be a Mukai vector such that
\begin{equation}
-2\le v^2.
\end{equation}
 Let  $H$ be a $v$-generic  ample divisor on $X$. Then the following hold:
 \begin{enumerate}
\item[(1)]
$\gM^{st}_v(X,H)$ is empty if and only if $v=(2,2\ell_0,\ell_0\cdot\ell_0)$ for some $\ell_0\in H^{1,1}_{\ZZ}(X)$.
\item[(2)]
Suppose that $\gM^{st}_v(X,H)$ is not empty. The generic sheaf parametrized by $\gM^{st}_v(X,H)$ (recall that $\gM^{st}_v(X,H)$ is irreducible by~\cite{kls}) is locally-free unless $v=(2,2\ell_0,\ell_0\cdot\ell_0-1)$ for some $\ell_0\in H^{1,1}_{\ZZ}(X)$.
\item[(3)]
Suppose that $\gM^{st}_v(X,H)$ is not empty. The Kodaira dimension of $\gM^{st}_v(X,H)$ is $0$. 
\end{enumerate}
\end{prp}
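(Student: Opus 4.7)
The plan is to derive each of the three items from existing literature on rank-two moduli on $K3$ surfaces, with some care around the exceptional Mukai vectors.

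For Item~(1), the key algebraic coincidence is that $v=(2,2\ell_0,\ell_0\cdot\ell_0)$ factors as $v=2w$ with $w=(1,\ell_0,(\ell_0\cdot\ell_0)/2)$ a primitive Mukai vector satisfying $w^2=0$. For the ``if'' direction I would show that any $H$-semistable torsion-free $F$ with $v(F)=v$ admits a Jordan--H\"older filtration whose two graded pieces both have Mukai vector $w$: slope equality and Hilbert-polynomial equality combine with the numerical constraints $v=2w$, $w^2=0$ to force both subfactors to have Mukai vector $w$, so $F$ is never strictly $H$-stable. For the ``only if'' direction I would invoke Yoshioka's existence theorem~\cite{yoshi}, extending \Ref{thm}{modce} to non-primitive Mukai vectors: for $v^2\ge -2$ and $v$-generic $H$, $\gM^{st}_v(X,H)$ is non-empty unless $v$ is of the form $kw$ with $w$ primitive isotropic and $k\ge 2$. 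For $r=2$ this exceptional case is precisely $v=(2,2\ell_0,\ell_0\cdot\ell_0)$.

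For Item~(2), given $[F]\in\gM^{st}_v(X,H)$ not locally-free, the reflexive hull $F^{\vee\vee}$ is a rank-$2$ vector bundle on the smooth surface $X$ and $\xi:=F^{\vee\vee}/F$ is a nonzero sheaf of finite length $m\ge 1$. Then $v(F^{\vee\vee})=v+(0,0,m)$ and $v(F^{\vee\vee})^2=v^2-4m$. A deformation-theoretic dimension count stratifies $\gM^{st}_v(X,H)$ by the pair $(F^{\vee\vee},\xi)$: the stratum with prescribed double dual has dimension at most $\dim\gM^{st}_{v(F^{\vee\vee})}+2m$, which can equal $2d(v)$ only when $\gM^{st}_{v(F^{\vee\vee})}$ fails to exist with the expected dimension. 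Since $\gM^{st}_v$ is irreducible~\cite{kls}, the generic member can fail to be locally-free only when $v(F^{\vee\vee})$ is the exceptional Mukai vector of Item~(1); this forces $m=1$ and $v=(2,2\ell_0,\ell_0\cdot\ell_0-1)$.

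For Item~(3), the Mukai--Tyurin holomorphic symplectic form $\sigma_v$ on $\gM^{st}_v(X,H)$~\cite{muk,tyur} is nowhere-degenerate on a smooth quasi-projective variety of dimension $2d(v)$; hence $\sigma_v^{\wedge d(v)}$ is a nowhere-vanishing section of the canonical bundle and $K_{\gM^{st}_v}\cong\cO_{\gM^{st}_v}$. To upgrade this to Kodaira dimension exactly $0$ I would take a smooth projective compactification of $\gM^{st}_v(X,H)$ and use that the full moduli space $\gM_v(X,H)$ is a symplectic variety in Beauville's sense~\cite{kls}; pluricanonical sections are then controlled by powers of $\sigma_v$, yielding $\kappa=0$.

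The main obstacle is Item~(2): carrying out the stratification dimension count carefully and sharply identifying when the generic stable sheaf fails to be locally-free requires combining Mukai-style infinitesimal analysis of the double-dual map with the non-emptiness criterion of Item~(1) applied to $v(F^{\vee\vee})$.
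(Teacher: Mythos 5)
Your Item (1) is circular in the ``if'' direction. To show $\gM^{st}_v(X,H)=\es$ for $v=2w$ with $w$ isotropic, you propose to show that every $H$-semistable $F$ admits a nontrivial Jordan--H\"older filtration with both factors of vector $w$; but the justification you give (slope and Hilbert-polynomial equalities together with $v=2w$, $w^2=0$) only constrains the factors of a filtration that is already assumed to be nontrivial --- if $F$ were stable, its Jordan--H\"older filtration would be $0\subset F$ and there is nothing to constrain. The missing content is the mechanism producing a destabilizing subsheaf inside an arbitrary semistable $F$. The paper's mechanism is: twist by $L_0^{-1}$ (harmless since $H$ is $v$-generic) to reduce to $v=(2,0,0)$, then apply Hirzebruch--Riemann--Roch to $F^{\vee\vee}$: since $\chi(F^{\vee\vee})=2+\mathrm{length}(F^{\vee\vee}/F)\ge 2$, the sheaf $F^{\vee\vee}$ has a section or a co-section, and because all slopes involved are zero and $H$ is generic, either one contradicts stability of $F$. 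Something of this kind must be supplied. For the ``only if'' direction note also that \Ref{thm}{modce} only covers rank coprime to the divisibility of $\ell$; the paper does not quote a non-primitive existence theorem but constructs stable sheaves by hand as kernels $0\to F\to\CC^2\otimes\cO_X\to\bigoplus_{i=1}^c\CC_{p_i}\to 0$ with $c\ge 3$, so you should verify that the statement you attribute to \cite{yoshi} really appears there.

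The more serious gap is in Item (2). Your count presupposes that $F^{\vee\vee}$ is Gieseker-stable, i.e.\ parametrized by $\gM^{st}_{v(F^{\vee\vee})}$, but the double dual of a stable sheaf is only slope-semistable, and this is not a technicality: in the exceptional case $v=(2,2\ell_0,\ell_0\cdot\ell_0-1)$ the generic stable sheaf has $F^{\vee\vee}\cong L_0^{\oplus 2}$ (see \Ref{rmk}{tuttising}), which is properly semistable, with $m=3$ and $v(F^{\vee\vee})=2v(L_0)$ where $v(L_0)^2=-2$ --- not $m=1$ and not the Item-(1) vector, as you assert (your $v$ and the true $v$ happen to coincide, but your description of the exceptional stratum is wrong). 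Moreover $\gM^{st}_{v'}$ is empty for \emph{every} $v'$ with $v'^2<-2$, i.e.\ for every $m>(v^2+2)/4$, a nonempty range for every $v$; so the failure of your bound is not detected by Item (1) alone, and where it fails the bound says nothing (in the exceptional case that stratum is the whole six-dimensional moduli space, so the claimed inequality is simply false there). For $\ell$ of odd divisibility your count can be repaired: genericity of $H$ and Item (1) of \Ref{prp}{camere} force $F^{\vee\vee}$ to be slope-stable, hence stable, and then the bound --- with the correct Quot-scheme dimension $3m$, not $2m$ --- gives $2d(v)-m<2d(v)$. For $\ell=2\ell_0$ one must separately bound the strata whose double duals are properly slope-semistable locally free sheaves (extensions of $\cI_W\otimes L_0$ by $L_0$), and this is exactly where the paper does the real work: it quotes \cite{ogfortuna} for $v=(2,0,-2)$ and then deforms sheaves $E$ with $E^{\vee\vee}$ isomorphic to the generic (locally free, slope-stable) member of that space to settle $s\le-2$. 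By contrast, your Item (3) is sound and genuinely different from the paper's argument: deducing $\kappa=0$ from the fact that $\gM_v(X,H)$ is a symplectic variety in the sense of \cite{kls} (hence has canonical singularities and trivial canonical class) is a clean alternative to the paper's reliance on the explicit desingularizations of \cite{ogfortuna}; just make ``pluricanonical sections are controlled by powers of $\sigma_v$'' precise via canonicity of the singularities.
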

\begin{proof}
(1): If the divisibility of $\ell$ is odd then $\gM^{st}_v(X,H)$ is not empty by~\Ref{thm}{modce}. Now assume   that  the divisibility of $\ell$ is  even i.e.~$v=(2,2\ell_0,s_0)$. 
Let $L_0$ be \lq\lq the\rq\rq line-bundle such that  
 $\ell_0=c_1^{\rm hom}(L_0)$. Tensorizing sheaves parametrized by $\gM^{st}_v(X,H)$ with $\cO_X(L_0^{-1})$ we  reduce to the case $v=(2,0,s)$ (because $H$ is $v$-generic). By hypothesis $s\le 0$. One checks easily that $\gM^{st}_v(X,H)$ is empty if $s=0$ (suppose that  $[F]\in\gM^{st}_v(X,H)$ and apply Hirzebruch-Riemann-Roch to $F^{\vee\vee}$). It remains to prove that $\gM^{st}_{(2,0,s)}(X,H)$ is non-empty if $s\le -1$. Choose pairwise distinct $p_1,\ldots,p_c\in X$ and pairwise distinct $K_1,\ldots,K_c\in\PP^1$. Let $F$ be the torsion-free sheaf fitting into the exact sequence
 \begin{equation}\label{barista}
0\to F\to \CC^2\otimes\cO_X\overset{\phi}{\lra} \bigoplus_{i=1}^c \CC_{p_i} \to 0.
\end{equation}
  One easily shows that if $c\ge 3$ then $F$ is $H$-stable. Since $v(F)=(2,0,2-c)$ we have  proved that $\gM^{st}_{(2,0,s)}(X,H)$ is non-empty for $s\le -1$. 
 (2): Suppose that  the divisibility of $\ell$ is odd. 
  Let $[F]\in\gM^{st}_v(X,H)$. If $F$ is locally-free there is nothing to prove. Assume that $F$ is not locally-free. The locally-free sheaf $F^{\vee\vee}$ is $H$-slope-semistable because $F$ is. Since $H$ is $v$-generic and  the divisibility of $c_1^{\rm hom}(\det F)$ is odd we get that $F^{\vee\vee}$ is $H$-slope-stable, in particular it is simple. As is well-known it follows that the generic deformation of $F$ is locally-free\footnote{Do a parameter count or examine the local-to-global spectral sequence abutting to $\Ext^p(F,F)$.}. Now suppose  that  the divisibility of $\ell$ is even. 
Arguing as in the proof of Item~(1) we may reduce to the case $v=(2,0,s)$. We must prove that if $s\le -2$ the  generic sheaf parametrized by $\gM^{st}_{(2,0,s)}(X,H)$ is locally-free. 
The moduli space  $\gM^{st}_{(2,0,-2)}(X,H)$ was investigated in~\cite{ogfortuna}: if $[F]\in\gM^{st}_{(2,0,-2)}(X,H)$ is generic then $F$ is locally-free and slope-stable. This proves the result for $s=-2$. By considering  deformations of torsion-free sheaves $E$ such that $E^{\vee\vee}\cong F$ one gets the result for  $s<-2$ as well.  
(3):  Suppose that $v$ is primitive: then
$\gM_v^{\rm st}(X,H)=\gM_v(X,H)$ because $H$ is $v$-generic, see Item~(3) of~\Ref{prp}{camere}. Thus  $\gM_v(X,H)$ is smooth and it  carries a holomorphic symplectic form:  it follows that it has trivial canonical bundle. Next suppose that $v$ is not primitive: we may reduce to the case $v=(2,0,2s_0)$ where $s_0\le -1$ i.e.~the moduli spaces investigated in~\cite{ogfortuna}. If $s_0=-1$ then $\gM_v(X,H)$ has a Hyperk\"ahler desingularization, if $s_0<-1$ then  $\gM_v(X,H)$ has a desingularization $\wh{\gM}_v(X,H)$  carrying a holomorphic $2$-form $\wh{\omega}$ which is generically non-degenerate (see Equation~(6.1) of~\cite{ogfortuna}), moreover one gets that the highest non-vanishing power $\bigwedge^{\rm max}\wh{\omega}$  generates the canonical ring of  $\wh{\gM}_v(X,H)$.
\end{proof}
\begin{rmk}\label{rmk:tuttising}
Let $X$ be a $K3$ surface. Let $v$ be the Mukai vector on $X$  given by $v=(2,2\ell_0,\ell_0\cdot\ell_0-1)$, see Item~(2) of~\Ref{prp}{kodzero}.  Let  $H$ be a $v$-generic  ample divisor on $X$. Let $L_0$ be \lq\lq the\rq\rq\ line-bundle such that $c_1^{\rm hom}(L_0)=\ell_0$. The generic sheaf parametrized by $\gM_v(X,H)$ fits into the exact sequence one gets by tensorizing~\eqref{barista}  (for $c=3$) with $L_0$:
 \begin{equation}\label{barolo}
0\to F\to \CC^2\otimes L_0\overset{\phi}{\lra} \bigoplus_{i=1}^3 \CC_{p_i} \to 0.
\end{equation}
Up to isomorphism the sheaf $F$ is independent of the choice of $K_1,K_2,K_3$ (notation as in the proof of~\Ref{prp}{kodzero}): thus the above  construction gives a birational map $X^{[3]}\dra\gM_v(X,H)$.
\end{rmk}
Let $X$ be a $K3$ surface and $L$ be  an  ample line-bundle on $X$.  Let us consider extensions 
\begin{equation}\label{severino}
0\to\cO_X\overset{\alpha}{\lra} F\overset{\beta}{\lra} \cI_Z\otimes L\to 0,\qquad [Z]\in X^{[n]}.
\end{equation}
Let $\ell:=c^{\rm hom}_1(L)$. We have 
\begin{equation}\label{impisi}
v(F)=(2,\ell,s),\qquad s=2+\frac{1}{2} \deg(\ell\cdot \ell)-n.
\end{equation}
\begin{prp}\label{prp:contopar}
Keep notation as above and let $H$ be an  ample divisor on $X$.  Let $v=(2,\ell,s)$.  Suppose that the generic sheaf $F$ parametrized by $\gM_v^{\rm st}(X,H)$ fits into Exact Sequence~\eqref{severino}. Then
\begin{equation}\label{carpos}
2+s> 0.
\end{equation}
\end{prp}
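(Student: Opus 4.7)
The plan is to parameterize sheaves $F$ of the given type by extensions and carry out a dimension count, using a cohomological identity on a $K3$ surface to relate the dimensions of the two natural fibers. Let $n:=2+\tfrac{1}{2}\ell\cdot\ell-s$, so that $Z\in X^{[n]}$ by~\eqref{impisi}. Consider the parameter space
\[
\cY:=\bigl\{(Z,[F],[\alpha])\,:\,[F]\in\gM_v^{\rm st}(X,H),\ Z\in X^{[n]},\ [\alpha]\in\PP H^0(F)\bigr\}
\]
of triples such that the cokernel of the corresponding injection $\cO_X\hookrightarrow F$ is isomorphic to $\cI_Z\otimes L$. There are two natural projections $\pi_1\colon\cY\to\gM_v^{\rm st}(X,H)$ (with fibers inside $\PP H^0(F)$) and $\pi_2\colon\cY\to X^{[n]}$ (with fibers inside $\PP\Ext^1(\cI_Z\otimes L,\cO_X)$). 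By the hypothesis, some irreducible component $\cY_0\subset\cY$ dominates $\gM_v^{\rm st}(X,H)$ via $\pi_1$.

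Next I extract two cohomological identities for every $(Z,F)$ appearing in such an extension. From~\eqref{severino} and $H^1(\cO_X)=0$ I obtain
\[
h^0(F)=1+h^0(\cI_Z\otimes L).
\]
From Serre duality $\Ext^1(\cI_Z\otimes L,\cO_X)\cong H^1(\cI_Z\otimes L)^{\vee}$ and Riemann--Roch $\chi(\cI_Z\otimes L)=\chi(L)-n=s$ I obtain $\dim\Ext^1(\cI_Z\otimes L,\cO_X)=h^0(\cI_Z\otimes L)-s$. Subtracting yields the key identity
\[
\dim\Ext^1(\cI_Z\otimes L,\cO_X)-h^0(F)=-s-1,
\]
valid independently of the specific $Z$ and $F$.

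Now the dimension count. Computing $\dim\cY_0$ via $\pi_1$ (dominant onto $\gM_v^{\rm st}(X,H)$ with generic fiber open in $\PP H^0(F)$) gives the equality
\[
\dim\cY_0=\dim\gM_v^{\rm st}(X,H)+h^0(F)-1.
\]
Computing via $\pi_2$ gives the inequality
\[
\dim\cY_0\le\dim X^{[n]}+\dim\Ext^1(\cI_Z\otimes L,\cO_X)-1=2n+\dim\Ext^1(\cI_Z\otimes L,\cO_X)-1.
\]
Combining these and inserting the identity above,
\[
\dim\gM_v^{\rm st}(X,H)\le 2n+\bigl(\dim\Ext^1(\cI_Z\otimes L,\cO_X)-h^0(F)\bigr)=2n-s-1.
\]
Substituting $\dim\gM_v^{\rm st}(X,H)=2+\ell\cdot\ell-4s$ and $2n=4+\ell\cdot\ell-2s$, the inequality collapses to $s\ge -1$, i.e.\ $2+s\ge 1>0$.

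The main technical point to justify is that the generic fiber of $\pi_1|_{\cY_0}$ really has dimension $h^0(F)-1$: one must check that, at a generic $[F]$ in the image, a generic $\alpha\in H^0(F)$ yields a torsion-free cokernel of the shape $\cI_Z\otimes L$ (not a section whose zero scheme contains a divisor). The hypothesis of the proposition produces at least one good section for generic $F$, so the locus of good sections is a nonempty open subset of $\PP H^0(F)$ and has full dimension $h^0(F)-1$. With this in hand the dimension count is rigorous and $s\ge -1$ follows, completing the proof of $2+s>0$.
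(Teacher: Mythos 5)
Your proof is correct, but its engine is genuinely different from the paper's. The paper also parametrizes the sheaves in question by extensions over a stratum $V\subset X^{[n]}$, via a Zariski-locally trivial bundle ${\bf E}\to V$ with fibers $\PP\Ext^1(\cI_Z\otimes L,\cO_X)$; but its key input is Item~(3) of \Ref{prp}{kodzero}: since $\gM_v^{\rm st}(X,H)$ has Kodaira dimension $0$ it is not uniruled, so the dominant classifying map ${\bf E}^0\to\gM_v^{\rm st}(X,H)$ must be constant on the rational fibers $\PP\Ext^1$, whence $\dim\gM_v^{\rm st}(X,H)\le\dim V\le 2n$ and $s\ge -1$ --- no computation of $h^0(F)$ or of $\dim\Ext^1$ is ever needed. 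You never invoke non-uniruledness: you let the $\Ext^1$-directions survive and cancel them against the $H^0(F)$-directions of the other projection, via the identity $\dim\Ext^1(\cI_Z\otimes L,\cO_X)-h^0(F)=-s-1$ (Serre duality, Riemann--Roch, $h^1(\cO_X)=0$; you should also state explicitly that $h^2(\cI_Z\otimes L)=h^2(L)=0$ because $L$ is ample, which your Riemann--Roch step uses), obtaining the sharper bound $\dim\gM_v^{\rm st}(X,H)\le 2n-s-1$, which again collapses to $s\ge-1$. What each route buys: yours is more elementary and self-contained, bypassing the Kodaira-dimension statement whose proof in the paper rests on the Kaledin--Lehn--Sorger irreducibility theorem \cite{kls} and on the analysis of desingularized moduli spaces in \cite{ogfortuna}; the paper's is shorter and never has to control the fibers of $\pi_1$. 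On that last point, the technical step you flagged is the right one and your fix works, but one half-sentence is still missing: for an arbitrary dominating component $\cY_0$ you would only get $\dim\cY_0\le\dim\gM_v^{\rm st}(X,H)+h^0(F)-1$, which is the useless direction. To get equality, note that the good locus in $\PP H^0(F)$, being a nonempty open subset of a projective space, is irreducible; hence each fiber of $\cY\to\gM_v^{\rm st}(X,H)$ lies in a single irreducible component of $\cY$, and a pigeonhole argument over a dense set of $[F]$ produces one component whose generic $\pi_1$-fiber is the entire good locus. With that addition your dimension count is rigorous.
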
 
\begin{proof}
We recall that $\gM_v^{\rm st}(X,H)$ is irreducible by~\cite{kls}. By our hypothesis there exist a strictly positive integer $e$ and an irreducible locally closed $V\subset X^{[n]}$ such that the following hold:
\begin{enumerate}
\item[(1)]
$\dim\Ext^1(\cI_Z\otimes L,\cO_X)=e$ for all $[Z]\in V$.
\item[(2)]
The generic $F$ parametrized by $\gM_v^{\rm st}(X,H)$
 fits into Exact Sequence~\eqref{severino} for some $[Z]\in V$. 
\end{enumerate}
 By Item~(1) there exists a locally-trivial (in the Zariski topology) $\PP^{e-1}$-bundle ${\bf E}\to V$ with fiber $\PP\Ext^1(\cI_Z \otimes L,\cO_X)$ over $[Z]\in V$. By Item~(2) the subset ${\bf E}^0\subset {\bf E}$ parametrizing stable sheaves is (open) dense and  the classification morphism $f\colon {\bf E}^0\to\gM_v^{\rm st}(X,H)$ is dominant.    By Item~(3) of~\Ref{prp}{kodzero} the moduli space  $\gM_v^{\rm st}(X,H)$ has Kodaira dimension equal to $0$, in particular it is not uniruled. It follows that  $f$ is constant on the fibers of  ${\bf E}^0\to V$ and hence
 \begin{equation*}
2n\ge\dim V\ge \dim\gM_v^{\rm st}(X,H)=2+v^2=4n-\deg (L\cdot L)-6.
\end{equation*}
Inequality~\eqref{carpos} follows at once from the above inequality together with~\eqref{impisi}.   
\end{proof}
The following hypothesis will be handy in what follows.
\begin{hyp}\label{hyp:comodo}
$\QQ c_1(L)=\QQ c_1(\cO_X(H))$ and the following holds: if $C$ is an effective divisor on $X$ such that 
\begin{equation}\label{walter}
\deg(C\cdot H)\le \frac{1}{2}\deg(L\cdot H)
\end{equation}
then $c_1(\cO_X(C))\in\QQ c_1(L)$. 
\end{hyp}
We let $m$ be the divisibility of $L$ i.e.
\begin{equation}\label{sottomultiplo}
L\cong L_0^m,\qquad \text{$\ell_0:=c^{\rm hom}_1(L_0)$ primitive.} 
\end{equation}
\begin{crl}\label{crl:contopar}
Keep notation as above, in particular  $v=(2,\ell,s)$. Suppose  that~\Ref{hyp}{comodo} holds and that
\begin{equation}\label{menodi}
2+s\le 0.
\end{equation}
 Then $h^0(F)=0$ for the generic sheaf $F$ parametrized by
 $\gM^{\rm st}_v(X,H)$.
\end{crl}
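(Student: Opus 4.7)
The plan is to argue by contradiction, using Proposition \ref{prp:contopar} as the key input. Suppose the generic $[F]\in\gM^{\rm st}_v(X,H)$ admits a non-zero global section $\sigma\colon\cO_X\to F$. My goal is to extract, possibly after a suitable twist, an exact sequence of the form \eqref{severino} to which Proposition \ref{prp:contopar} applies, and then derive a conflict with the hypothesis $2+s\le 0$.

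First I would take the saturation $E\subset F$ of the image of $\sigma$. On the smooth surface $X$, a saturated rank-$1$ subsheaf of a torsion-free sheaf is reflexive and hence a line bundle; since it contains $\cO_X$, one has $E=\cO_X(D)$ with $D$ an effective divisor. This yields a short exact sequence
$$0\to \cO_X(D)\to F\to \cI_Z\otimes L(-D)\to 0$$
with $Z\subset X$ a $0$-dimensional subscheme. Gieseker $H$-stability of $F$ forces $\deg(D\cdot H)\le \tfrac12\deg(L\cdot H)$, and Hypothesis \ref{hyp:comodo} then gives $c_1(\cO_X(D))\in\QQ \ell_0$; since $\ell_0$ is primitive integral and $D$ is effective, $\cO_X(D)\cong L_0^k$ for some $0\le k\le m/2$. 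Tensoring the exact sequence by $L_0^{-k}$ (an operation that induces an isomorphism between the relevant moduli spaces, preserving stability) yields
$$0\to \cO_X\to F\otimes L_0^{-k}\to \cI_Z\otimes L_0^{m-2k}\to 0.$$
For $k<m/2$ the line bundle $L_0^{m-2k}$ is ample, so this is an exact sequence of the form \eqref{severino} for the shifted Mukai vector $v':=v\cdot\ch(L_0^{-k})$, whose third entry computes to $s'=s-k(m-k)\,\ell_0^2$. Proposition \ref{prp:contopar} then forces $2+s'>0$; but $\ell_0^2>0$ (since $L_0$ is ample) and $k(m-k)\ge 0$ give $s'\le s\le -2$, contradicting $2+s'>0$.

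The only obstacle is the boundary case $k=m/2$ (which requires $m$ even), where $L_0^{m-2k}=\cO_X$ is not ample and Proposition \ref{prp:contopar} does not apply. I would handle it directly: in this case $\mu_H(L_0^{m/2})=\mu_H(F)$, so strict Gieseker stability of $F$ demands $\chi(L_0^{m/2})<\chi(F)/2$, which Riemann--Roch turns into $s>2+\tfrac{m^2}{4}\,\ell_0^2$. This is incompatible with $s\le -2$, so $k=m/2$ never arises and the previous paragraph suffices.
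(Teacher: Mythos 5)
Your argument follows the same route as the paper's proof --- extract a line subbundle $L_0^k\hra F$ from a global section, twist by $L_0^{-k}$, and play the resulting extension \eqref{severino} against \Ref{prp}{contopar} --- but its very first step rests on a false assertion, and this is a genuine gap. It is \emph{not} true that a saturated rank-$1$ subsheaf of a torsion-free sheaf on a smooth surface is reflexive: saturation only forces the quotient to be torsion-free, and that is not enough. For a counterexample take a non-empty $0$-dimensional subscheme $Z$, an effective divisor $D$ such that some $C\in|D|$ contains $Z$, and $F=(\cI_Z\otimes\cO_X(D))\oplus\cO_X$ with the section $\sigma=(s_C,0)$, where $s_C$ is the section of $\cI_Z\otimes\cO_X(D)$ vanishing on $C$. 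The quotient of $F$ by the image of $\sigma$ is $\bigl((\cI_Z\otimes\cO_X(D))/\cO_X\, s_C\bigr)\oplus\cO_X$, whose first summand is torsion (it is supported on $C$), so the saturation of the image of $\sigma$ is $(\cI_Z\otimes\cO_X(D))\oplus 0$, which is not locally free. Saturated subsheaves on a smooth surface are locally free when the \emph{ambient} sheaf is locally free; for a merely torsion-free $F$ the saturation of a section only has the form $\cI_W\otimes\cO_X(D)$. If you run your construction on such an $F$, the twisted extension reads $0\to\cI_W\to F\otimes L_0^{-k}\to\cI_Z\otimes L_0^{m-2k}\to 0$, which is not of the form \eqref{severino}, and \Ref{prp}{contopar} cannot be invoked.

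What is missing is precisely the opening step of the paper's proof: one first shows that the \emph{generic} sheaf parametrized by $\gM^{\rm st}_v(X,H)$ is locally free, using Item (2) of \Ref{prp}{kodzero} together with \Ref{rmk}{tuttising}. This is a second place (besides the final contradiction) where hypothesis \eqref{menodi} enters: the exceptional vector $v=(2,2\ell_0,\ell_0\cdot\ell_0-1)$ has $s=\ell_0\cdot\ell_0-1\ge 1$ because $L$ is ample, so it is excluded by $2+s\le 0$. Once $F$ is known to be locally free, your saturation is indeed a line bundle $\cO_X(D)$, and the rest of your argument is correct and coincides with the paper's: the identification $\cO_X(D)\cong L_0^k$ via \Ref{hyp}{comodo} and slope-semistability, the twist (legitimate for Gieseker stability exactly because $\QQ c_1(L)=\QQ c_1(\cO_X(H))$), and the contradiction $2+s'\le 2+s\le 0$ against \Ref{prp}{contopar}. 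Your explicit treatment of the boundary case $k=m/2$ --- equal slopes force the Gieseker inequality $\chi(L_0^{m/2})<\chi(F)/2$, i.e.\ $s>2+\frac{m^2}{4}\,\ell_0\cdot\ell_0$, impossible when $s\le -2$ --- is correct, and it fills in the \lq\lq short argument\rq\rq\ that the paper leaves to the reader.
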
 
\begin{proof}
We notice that Inequality~\eqref{menodi} implies that the generic sheaf parametrized by $\gM^{\rm st}_v(X,H)$ is locally-free: this follows from Item~(2) of~\Ref{prp}{kodzero} together with~\Ref{rmk}{tuttising}.  Suppose that $h^0(F)>0$ for the generic  $[F]\in\gM^{\rm st}_v(X,H)$.  Since the generic sheaf parametrized by $\gM^{\rm st}_v(X,H)$ is locally-free there exists an effective divisor $C$ such that we have an injection $\cO_X(C)\hra F$  for generic  $[F]\in\gM^{\rm st}_v(X,H)$. By $H$-slope-semistability of $F$ we get that~\eqref{walter} holds. By our hypothesis it follows that $\cO_X(C)\cong L_0^a$ for a non-negative integer (notation as in~\eqref{sottomultiplo}): by stability of $F$ we get that $a<m/2$ (a priori $a=m/2$ is also possible: a short argument shows that it is impossible). Thus the generic sheaf $F$ parametrized by $\gM^{\rm st}_v(X,H)$ fits into an exact sequence
\begin{equation}\label{aureliosaffi}
0\to L_0^a\to F\to\cI_Z\otimes L_0^{(m-a)}\to 0,\qquad 0\le a<m/2
\end{equation}
where $Z\subset X$ is a $0$-dimensional subscheme. 
 Let
\begin{equation}\label{gofmam}
v_a:=v(F\otimes L_0^{-a})=(2,(m-2a)\ell_0,s_a),\qquad s_a:=s-a(m-a) \ell_0\cdot \ell_0.
\end{equation}
Tensorization by $L_0^{-a}$ defines an isomorphism
\begin{equation*}
\begin{matrix}
\gM^{\rm st}_v(X,H) & \overset{\sim}{\lra} & \gM^{\rm st}_{v_a}(X,H) \\
[F] & \mapsto & [F\otimes L_0^{-a}]
\end{matrix}
\end{equation*}
because $\QQ c_1(L)=\QQ c_1(\cO_X(H))$. 
It follows that the generic $[G]\in \gM^{\rm st}_{v_a}(X,H)$ fits into an exact sequence
\begin{equation}
0\to\cO_X\to G\to\cI_Z\otimes L_0^{(m-2a)}\to 0.
\end{equation}
That contradicts~\Ref{prp}{contopar} because
\begin{equation*}
2+s_a=2+s-a(m-a) \ell_0\cdot \ell_0\le 2+s\le 0.
\end{equation*}
\end{proof}
The proof of the following result consists in realizing geometrically  a standard Fourier-Mukai transform. It  gives the proof that~\eqref{anvedi} holds for a  particular choice of Mukai vector $v$: we need to check that special case separately.  
\begin{prp}\label{prp:essemeno}
Keeping notation as above suppose  that  $L$ is ample and that~\Ref{hyp}{comodo} holds. Let  $v=(2,\ell,-1)$.   Let $[F]\in \gM^{\rm st}_{v}(X,H)$ be generic. Then  $F$ fits into Exact Sequence~\eqref{severino} where $n=(\deg(c_1(L)\cdot c_1(L))/2)+3$.  
\end{prp}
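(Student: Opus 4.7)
The plan is to reverse-engineer $F$ from a global section and its saturation: the key numerical fact, $\chi(F)=r+s=1$ with $s=-1$, should force the saturation to be $\cO_X$ itself rather than a proper power of $L_0$.

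I would first show that $h^0(F)\ge 1$ for every $[F]\in\gM_v^{\rm st}(X,H)$: the Mukai vector gives $\chi(F)=1$, while Gieseker stability of $F$ together with ampleness of $L$ rules out any nonzero map $F\to\cO_X$ (such a map would exhibit a rank-$1$ quotient of slope $0<\mu_H(F)$), so $h^2(F)=0$ by Serre duality. Since $\ell_0\cdot\ell_0>0$ (as $L_0^m=L$ is ample), the Mukai vector $v=(2,m\ell_0,-1)$ is not of the form $(2,2\ell_0,\ell_0\cdot\ell_0-1)$, so by \Ref{prp}{kodzero}(2) the generic $F$ is locally free. Pick a nonzero section $\alpha\colon\cO_X\hra F$ and let $E\subset F$ be its saturation; then $E$ is a line bundle $\cO_X(D)$ with $D$ effective, and stability together with \Ref{hyp}{comodo} forces $D=a\ell_0$ for some integer $0\le a<m/2$ (the value $a=m/2$ is ruled out by comparing the constant terms of the normalized Hilbert polynomials). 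Because $E$ is saturated in a locally free rank-$2$ sheaf, the quotient must be $\cI_Z\otimes L_0^{m-a}$ for some $0$-dimensional $Z$, yielding an extension
\[
0\to L_0^a\to F\to\cI_Z\otimes L_0^{m-a}\to 0.
\]
By the irreducibility of $\gM_v^{\rm st}(X,H)$ (from \cite{kls}), the integer $a$ is independent of the generic $F$ chosen.

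The main obstacle will be to rule out $a\ge 1$. Supposing $a\ge 1$ (which forces $m\ge 3$), I would tensor the extension by $L_0^{-a}$. The corresponding moduli space $\gM_{v_a}^{\rm st}(X,H)$ is isomorphic to $\gM_v^{\rm st}(X,H)$ (tensoring by a line bundle preserves Gieseker stability), and a short Chern-character computation gives
\[
v_a=\bigl(2,\,(m-2a)\ell_0,\,-1-a(m-a)\,\ell_0\cdot\ell_0\bigr).
\]
The tensored extension has the form \eqref{severino} with $L$ replaced by $L_0^{m-2a}$, which is ample thanks to \Ref{hyp}{comodo} (its class is a positive rational multiple of $c_1(\cO_X(H))$). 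Then \Ref{prp}{contopar} forces $2+s_a>0$. However, $a(m-a)\ge 2$ for $a\ge 1$ and $m\ge 3$, combined with $\ell_0\cdot\ell_0\ge 2$ (evenness of the K3 intersection form together with $\ell_0\cdot\ell_0>0$), yields $s_a\le -5$, a contradiction. Hence $a=0$.

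Finally, with $a=0$ we obtain the desired extension $0\to\cO_X\to F\to\cI_Z\otimes L\to 0$. The length $n$ of $Z$ is read off from Whitney's formula: $c_2(F)=c_2(\cI_Z\otimes L)=n$, while $c_2(F)=c_2(v)=r+\tfrac{1}{2}\ell\cdot\ell-s=3+\tfrac{1}{2}\deg(c_1(L)\cdot c_1(L))$, so $n=\tfrac{1}{2}\deg(c_1(L)\cdot c_1(L))+3$.
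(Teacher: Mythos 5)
Your proof is correct, but it runs in the opposite direction from the paper's and rests on different key inputs. The paper argues \emph{from} $X^{[n]}$ \emph{to} the moduli space: for generic $[Z]\in X^{[n]}$ the Cayley--Bacharach property gives a unique non-trivial extension \eqref{severino}, the resulting sheaf is locally free with $h^0(F)=1$, and it is shown to be $H$-slope-stable (using \Ref{hyp}{comodo} exactly where you use it); this produces a generically injective rational map $X^{[n]}\dra\gM^{\rm st}_{v}(X,H)$, which is birational because $\dim X^{[n]}=2n=2+v^2=\dim\gM^{\rm st}_{v}(X,H)$ and the target is irreducible by \cite{kls}. You instead dissect the generic $F$ directly: the section forced by $\chi(F)=1$ and $h^2(F)=0$ is saturated into a line bundle $L_0^a$ with $0\le a<m/2$ via stability and \Ref{hyp}{comodo}, and $a\ge 1$ is excluded by twisting down and invoking \Ref{prp}{contopar}. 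This is precisely the mechanism of the paper's proof of \Ref{crl}{contopar}, pushed to the boundary case $s=-1$, and your numerical estimates ($a(m-a)\ge 2$, $\ell_0\cdot\ell_0\ge 2$, hence $2+s_a\le -3$) are sound. The trade-off: the paper's route yields the stronger conclusion that $X^{[n]}\dra\gM^{\rm st}_{v}(X,H)$ is birational (the geometric Fourier--Mukai transform alluded to before the statement) and does not need \Ref{prp}{kodzero} at all for this proposition, whereas your route avoids the Serre construction but leans on heavier inputs -- generic local freeness (Item~(2) of \Ref{prp}{kodzero}) and the Kodaira-dimension-zero count behind \Ref{prp}{contopar} -- which presuppose suitable genericity of $H$ (for $v$ and, after twisting, for $v_a$); the paper incurs the same implicit debt in \Ref{crl}{contopar}, though not here.

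One justification in your write-up should be repaired: tensoring by an arbitrary line bundle does \emph{not} preserve Gieseker $H$-stability in general. If $\rho(X)\ge 2$ and $E\subset F$ has $\mu_H(E)=\mu_H(F)$, then after tensoring by $M$ the comparison of the constant terms in \eqref{hilbnorm} is shifted by $\bigl(\tfrac{c_1(E)}{\rk E}-\tfrac{c_1(F)}{\rk F}\bigr)\cdot c_1(M)$, which can reverse the inequality. The isomorphism $\gM^{\rm st}_v(X,H)\cong\gM^{\rm st}_{v_a}(X,H)$ you need is nevertheless valid, but for the reason recorded in the proof of \Ref{crl}{contopar}: \Ref{hyp}{comodo} makes $c_1(L_0)$ a rational multiple of $c_1(\cO_X(H))$, so the offending cross term vanishes. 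Cite that, not a general principle.
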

\begin{proof}
Let $n=(\deg(c_1(L)\cdot c_1(L))/2)+3$ and let $[Z]\in X^{[n]}$ be generic. Then  $Z$ satisfies the Cayley-Bacharach property with respect to the linear system $|L|$ and  up to isomorphism there exists a unique non-trivial extension~\eqref{severino}. Moreover 
the extension  $F$ is locally-free and $h^0(F)=1$. Now notice that 
$v(F)=(2,\ell,-1)=v$. We claim that $F$ is $H$-slope-stable. In fact suppose that $\iota\colon A\hra F$ is a destabilizing subsheaf i.e.~an invertible sheaf such that $2\deg(A\cdot H)\ge \deg(L\cdot H)$. Since $L$ is ample we get that $\deg(A\cdot H)>0$ and hence   $\iota$ cannot factor through $\alpha$ - notation as in~\eqref{severino}. Thus $\beta\circ\iota\not=0$ and hence $h^0(\cI_Z\otimes L\otimes A^{-1})>0$. Let $C\in | \cI_Z \otimes L\otimes A^{-1} |$. The inequality  $2\deg(A\cdot H)\ge \deg(L\cdot H)$ gives that~\eqref{walter} holds. It follows that $C\in | L_0^k |$ where $k\le m/2$ (notation as in~\eqref{sottomultiplo}) and hence  
$h^0(\cI_Z\otimes L_0^k)>0$: that is absurd because $h^0( L_0^k)<h^0(L)=n-1$ (recall that $Z$ is generic in $X^{[n]}$). 
 The above construction defines a rational map 
\begin{equation}\label{oldstuff}
X^{[n]}\dra \gM^{\rm st}_{v}(X,H)
\end{equation}
which is generically injective  (recall that with the above hypotheses $h^0(F)=1$ for the generic $F$ fitting into 
Extension~\eqref{severino}). The moduli space  $\gM^{\rm st}_{v}(X,H)$  is irreducible by~\cite{kls}; since 
\begin{equation}\label{numgiusto}
\dim X^{[n]}=2n=2+v^2=\dim\gM^{\rm st}_{v}(X,H)
\end{equation}
 it follows that~\eqref{oldstuff} is a birational map. This proves the proposition.
\end{proof}
\begin{crl}\label{crl:essemeno}
Keep hypotheses and notation as in~\Ref{prp}{essemeno}. Then
\begin{equation}\label{ciauscolo}
\{c_2(F) \mid [F]\in\ov{\gM}_v(X,H)^{\rm st} \}=\{\gz\in S_{d(v)}(X) \mid \deg\gz=c_2(v)\}.
\end{equation}
\end{crl}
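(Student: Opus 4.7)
By~\Ref{prp}{essemeno}, a generic $[F]\in \gM_v^{\rm st}(X,H)$ fits into a short exact sequence
\begin{equation*}
0\longrightarrow \cO_X\longrightarrow F\longrightarrow \cI_Z\otimes L\longrightarrow 0,\qquad [Z]\in X^{[n]},\quad n=\tfrac{1}{2}\ell\cdot\ell+3.
\end{equation*}
Since $c(\cO_X)=1$, Whitney multiplicativity reduces to $c(F)=c(\cI_Z\otimes L)=1+\ell+[Z]$, so $c_2(F)=[Z]$. Because $v^2=\ell\cdot\ell-2rs=\ell\cdot\ell+4$ we have $d(v)=\tfrac{1}{2}(v^2+2)=\tfrac{1}{2}\ell\cdot\ell+3=n=c_2(v)$; hence $c_2(F)=[Z]+0\cdot c_X\in S_{d(v)}(X)$ with $\deg c_2(F)=c_2(v)$ on a dense open subset of $\gM_v^{\rm st}(X,H)$.

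To upgrade this to every $[F_0]\in\gM_v^{\rm st}(X,H)$ I would use the irreducibility of the moduli space (proved in~\cite{kls}) to choose an irreducible smooth curve $\Gamma$ together with a morphism $\Gamma\to\gM_v^{\rm st}(X,H)$ whose image contains $[F_0]$ and meets the dense open locus on which the extension description above is valid. After an étale base change, a $\Gamma$-flat family $\cE$ on $\Gamma\times X$ of $H$-stable sheaves exists with $\cE_{c_0}\cong F_0$ for some $c_0\in\Gamma$. Setting $\cZ:=c_2(\cE)\in CH^2(\Gamma\times X)$ and applying~\Ref{clm}{specializzo} to the trivial family $\Gamma\times X\to\Gamma$, one concludes that $c_2(F_0)=\cZ|_{\{c_0\}\times X}$ belongs to $S_{d(v)}(X)$.

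We have thus verified the hypothesis of~\Ref{prp}{bastacont}, which immediately yields the equality~\eqref{ciauscolo}. The only delicate point is the specialization step: although~\Ref{clm}{specializzo} is formally stated for a nontrivial family of $K3$ surfaces, its proof uses only the fact that effective $0$-cycles on a fixed surface in a given rational equivalence class form a countable union of closed subsets of the symmetric product, together with the existence of a section representing $c_X$---both features are trivially present in the constant-family case.
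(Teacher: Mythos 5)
Your proposal is correct and takes essentially the same route as the paper: its (very terse) proof likewise deduces $n=d(v)$ from~\eqref{numgiusto} and invokes~\Ref{prp}{essemeno}, leaving implicit exactly the steps you spell out, namely the Whitney computation $c_2(F)=[Z]$ and the upgrade from generic to arbitrary stable sheaves via~\Ref{clm}{specializzo} applied to a constant family followed by~\Ref{prp}{bastacont} (the same reduction the paper states explicitly in the proof of~\Ref{prp}{casochiave}). Your closing caveat is unnecessary: \Ref{clm}{specializzo} is stated for any projective family of $K3$ surfaces, so it applies verbatim to $\Gamma\times X\to\Gamma$.
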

\begin{proof}
Equation~\eqref{numgiusto} gives that $n=d(v)$: thus the corollary follows from~\Ref{prp}{essemeno}.
\end{proof}
\begin{prp}\label{prp:casochiave}
Keeping notation as above suppose  that  $L$ is ample and that~\Ref{hyp}{comodo} holds.
 Let $v=(2,\ell,s)$ be a Mukai vector with $s\ge 0$.   Then
\begin{equation}\label{finocchiona}
\{c_2(F) \mid [F]\in\ov{\gM}_v(X,H)^{\rm st} \}=\{\gz\in S_{d(v)}(X) \mid \deg\gz=c_2(v)\}.
\end{equation}
\end{prp}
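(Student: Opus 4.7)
By~\Ref{prp}{bastacont} it suffices to show that $c_2(F)\in S_{d(v)}(X)$ for every $[F]\in\gM^{\rm st}_v(X,H)$. Fix such $F$. Ampleness of $L$ and $H$-stability of $F$ yield $\Hom(F,\cO_X)=0$ by Serre duality, hence $h^2(F)=0$ and $h^0(F)\ge\chi(F)=2+s\ge 2$. The strategy is to apply the degeneracy locus construction of~\Ref{sec}{magari}: form the rational map $\lambda_F\colon\Gr(2,H^0(F))\dra |L|$ of~\eqref{dallagrass}, show $\dim\ov{\im\lambda_F}\ge 2s=\dim|L|-d(v)$, intersect $\ov{\im\lambda_F}$ with $\Sigma_{d(v)}(X,L)\subset|L|$, and conclude via the exact sequence argument of~\Ref{prp}{tiroindi}.

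Write $L\cong L_0^m$ with $L_0$ primitive as in~\eqref{sottomultiplo}. An indeterminacy point $U\in\Gr(2,H^0(F))$ of $\lambda_F$ is a 2-plane whose image under $\varphi_F^U$ has rank~$1$; the saturation of that image is a rank-$1$ subsheaf of $F$ containing $U$ as global sections. $H$-stability of $F$ together with~\Ref{hyp}{comodo} forces this saturation to be of the form $L_0^a$ with $1\le a<m/2$ (the value $a=0$ is excluded because $h^0(\cO_X)=1<2$), so the indeterminacy locus is contained in $\bigcup_a \Gr(2,H^0(L_0^a))\subset\Gr(2,H^0(F))$ over such $a$ with $L_0^a\hookrightarrow F$. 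Before analyzing $\lambda_F$ I dispose of the pathological case $\Gr(2,H^0(F))_*=\es$ (the evaluation map $H^0(F)\otimes\cO_X\to F$ has rank-$1$ image with saturation some $L_0^a$): then $F$ fits into an extension $0\to L_0^a\to F\to\cI_Z\otimes L_0^{m-a}\to 0$ with $H^0(F)=H^0(L_0^a)$. Whitney's formula and~\eqref{interseco} give $c_2(F)=a(m-a)\ell_0^2\,c_X+[Z]$. The equality $h^0(L_0^a)=h^0(F)\ge 2+s$ forces $s\le a^2\ell_0^2/2$, and a direct computation yields $d(v)-\ell(Z)=a(m-a)\ell_0^2-s-1\ge a(m-3a/2)\ell_0^2-1>0$ (using $1\le a<m/2$ and $\ell_0^2\ge 2$); therefore $\ell(Z)<d(v)$ and $[Z]\in S_{d(v)}(X)$ by the definition of $S_{d(v)}(X)$, whence $c_2(F)\in S_{d(v)}(X)$.

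Now assume $\Gr(2,H^0(F))_*$ is open dense in the irreducible Grassmannian. I claim $\dim\ov{\im\lambda_F}=\dim\Gr(2,H^0(F))=2(h^0(F)-2)\ge 2s$. By~\Ref{lmm}{tiroindi} the pullback of the hyperplane class on $|L|$ equals the restriction of the Plücker class to $\Gr(2,H^0(F))_*$. If $\lambda_F$ contracted a positive-dimensional subvariety $\bar Y\subset\Gr(2,H^0(F))$ meeting $\Gr(2,H^0(F))_*$, then a complete integral curve $\gamma\subset\bar Y$ through a point of $\Gr(2,H^0(F))_*$ would have trivial pullback line bundle on the open dense subset $\gamma\cap\Gr(2,H^0(F))_*$, while the Plücker class would restrict to the same open as a line bundle of positive degree (Plücker is ample on $\gamma$)---impossible because line bundles on a smooth curve are determined by their restriction to a dense open. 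Combined with $\dim|L|=1+\ell\cdot\ell/2=2s+d(v)$ and $\dim\Sigma_{d(v)}(X,L)\ge d(v)$ from~\eqref{almenogizero}, the subvarieties $\ov{\im\lambda_F}$ and $\Sigma_{d(v)}(X,L)$ of the projective space $|L|$ have dimensions summing to at least $\dim|L|$ and therefore meet. For $C\in\ov{\im\lambda_F}\cap\Sigma_{d(v)}(X,L)$ and preimage $U\in\Gr(2,H^0(F))_*$ with $\lambda_F(U)=C$, the exact sequence $0\to U\otimes\cO_X\to F\to\iota_*\xi\to 0$ with $\xi$ a sheaf on $C$, combined with~\Ref{clm}{spingo} and~\Ref{prp}{anakin}, gives $c_2(F)=-c_2(\iota_*\xi)\in S_{d(v)}(X)$. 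The most delicate step is the generic-finiteness argument translating the Plücker-pullback identity of~\Ref{lmm}{tiroindi} into the dimension bound $\dim\ov{\im\lambda_F}\ge 2s$ uniformly in the shape of the indeterminacy locus $\bigcup_a \Gr(2,H^0(L_0^a))$.
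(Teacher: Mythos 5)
Your treatment of the degenerate case $\Gr(2,H^0(F))_*=\es$ is correct (all sections then factor through a saturation $L_0^a$, and your numerical estimate $d(v)-\ell(Z)=a(m-a)\ell_0^2-s-1>0$ checks out), but the main case contains a genuine gap, and it is exactly the point the paper's introduction flags as the difficulty with this approach. Your generic-finiteness argument rests on the assertion that ``line bundles on a smooth curve are determined by their restriction to a dense open.'' This is false: $\cO_{\PP^1}$ and $\cO_{\PP^1}(1)$ have isomorphic restrictions to $\mathbb{A}^1$; the degree of a line bundle cannot be read off from a dense open subset of a curve. The correct principle --- a morphism pulling back an ample class to an ample class is finite --- requires the source to be \emph{complete}, and here $\lambda_F$ is only defined on the quasi-projective open set $\Gr(2,H^0(F))_*$. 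By the proof of~\Ref{lmm}{tiroindi}, $\lambda_F$ is the restriction of a linear projection $\PP(\bigwedge^2 H^0(F))\dra|L|$, and linear projections do contract positive-dimensional subvarieties, namely those whose closures pass through the center of projection (the indeterminacy locus); nothing in your argument rules out that every positive-dimensional fiber closure meets the indeterminacy locus, in which case no contradiction with the Pl\"ucker-pullback identity arises. Hence the bound $\dim\ov{\im\lambda_F}\ge 2s$ is unproven. There is a second, related gap: even granting that bound, your intersection argument produces a point $C\in\ov{\im\lambda_F}\cap\Sigma_{d(v)}(X,L)$, i.e.\ a point of the \emph{closure} of the image; since $\im\lambda_F$ is only constructible, such a $C$ need not have a preimage $U\in\Gr(2,H^0(F))_*$, so the exact sequence $0\to U\otimes\cO_X\to F\to\iota_*\xi\to 0$ need not exist. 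Both problems are absent in~\Ref{prp}{tiroindi} precisely because there $\Gr(2,H^0(F))_*=\Gr(2,H^0(F))$ is complete, so $\lambda_F$ is a finite morphism with closed image.

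The paper circumvents this obstacle by a different route, which you would do well to compare with: it first reduces (via~\Ref{clm}{specializzo} and~\Ref{prp}{bastacont}) to the \emph{generic} $[F]\in\gM_v^{\rm st}(X,H)$, and then runs an induction on the divisibility $m$ of $\ell$, with base case $m=1$ given by~\Ref{prp}{picari}. If $\Gr(2,H^0(F))_*=\Gr(2,H^0(F))$ for generic $F$, one applies~\Ref{prp}{tiroindi}; otherwise the generic $F$ contains $L_0^a$ with $a>0$, and twisting by $L_0^{-a}$ identifies $\gM_v^{\rm st}(X,H)$ with $\gM_{v_a}^{\rm st}(X,H)$, whose determinant has strictly smaller divisibility $m-2a$. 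The key input replacing your finiteness claim is the Kodaira-dimension-zero statement (Item~(3) of~\Ref{prp}{kodzero}, used through~\Ref{prp}{contopar} and~\Ref{crl}{contopar}), which forces $s_a\ge -1$; the case $s_a=-1$ is then handled by the Serre-construction argument of~\Ref{prp}{essemeno} and~\Ref{crl}{essemeno}, the case $s_a\ge 0$ by the inductive hypothesis, and one returns to $v$ via~\Ref{prp}{tensinv}. If you wish to keep your direct approach, you would need an honest proof that $\lambda_F$ is generically finite and that its actual image (not just its closure) meets $\Sigma_{d(v)}(X,L)$ --- precisely the resolution of indeterminacies that the paper warns ``might be quite difficult.''
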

\begin{proof}
Let $m$ be as in~\eqref{sottomultiplo}: then  $m\ge 1$ because $L$ is ample. We proceed by induction on $m$. If $m=1$  the statement holds by~\Ref{prp}{picari}. Now suppose that $m>1$. By~\Ref{clm}{specializzo} and~\Ref{prp}{bastacont} it suffices to prove that $c_2(F)\in S_{d(v)}(X)$ for the generic $[F]\in \gM^{\rm st}_v(X,H)$. \Ref{rmk}{tuttising} shows that~\eqref{finocchiona} holds if $v=(2,2\ell_0,\ell_0\cdot\ell_0-1)$: thus by Item~(2) of~\Ref{prp}{kodzero} we may assume that the generic sheaf parametrized by $\gM^{\rm st}_v(X,H)$ is locally-free. First assume that for the generic $[F]\in \gM^{\rm st}_v(X,H)$ we have $\Gr(2,H^0(F))_{*}=\Gr(2,H^0(F))$: then  $c_2(F)\in S_{d(v)}(X)$ by~\Ref{prp}{tiroindi}. Next we  assume that for the generic $[F]\in \gM^{\rm st}_v(X,H)$ we have $\Gr(2,H^0(F))_{*}\not=\Gr(2,H^0(F))$.  Let  $U\in(\Gr(2,H^0(F))\setminus\Gr(2,H^0(F))_{*})$ i.e.~the image of the map $U\otimes_{\CC}\cO_X\to F$ is a sheaf $\xi$ of rank $1$.  The sheaf $\xi$ is  invertible because $F$ is locally-free ($[F]$ is generic), since it has  $2$ independent sections there exists a non-zero effective divisor $C$ such that $\xi\cong\cO_X(C)$. By stability of $F$ we get that~\eqref{walter} holds.  By~\Ref{hyp}{comodo} it follows that 
$\xi\cong L_0^a$ for a strictly positive integer. Thus $F$ fits into Exact Sequence~\eqref{aureliosaffi} and $a>0$. Let $G:=F(-aH)$ and let $v_a$ be as in~\eqref{gofmam}. Then $[G]\in\gM_{v_a}(X,H)^{\rm st}$ is generic and $h^0(G)>0$. By~\Ref{crl}{contopar} it follows that $s_a\ge -1$. If $s_a=-1$ then~\eqref{finocchiona} holds for $v=v_a$ by~\Ref{crl}{essemeno}, if $s_a\ge 0$ then~\eqref{finocchiona} holds for $v=v_a$ by the inductive hypothesis (notice that $(m-2a)<m$). By~\Ref{prp}{tensinv} we get that~\eqref{finocchiona} holds for $\gM_{v}(X,H)^{\rm st}$.
\end{proof}

\vskip 2mm
\n
{\it Proof of~\Ref{prp}{rangodue}.\/} 
Tensorizing sheaves parametrized by  $\gM^{\rm st}_v(X,H)$ with high enough powers of $\cO_X(H)$ and replacing $h$ by $(\ell+2 n h)$ (for $n\gg 0$ it lies in the same $v$-chamber as $h$ does)  we may assume that $v=(2,mh,s)$ here
\begin{equation}
m>0,\qquad s\ge 0.
\end{equation}
Let $g:=g(H)$ i.e.~$2g-2=\deg(H\cdot H)$. Then there exists $\ov{t}\in T_g$ such that $(X,H)\cong (X_{\ov{t}},H_{\ov{t}})$. Given $t\in T_g$ we let $H_t\in |M_t|$ and $v_t:=(2,m h_t,s)$. 
We must prove that~\eqref{iamme} holds with $X=X_t$, $H=H_t$ and $v=v_t$ for an arbitrary $t\in T_g$. First we notice that $\gM_{v_t}(X_t,H_t)$ is not empty by~\Ref{prp}{kodzero}. Let $T_g(v)^{\star}\subset T_g$ be the set of $t$ such that~\Ref{hyp}{comodo} holds for $X=X_t$ and $L:=M_t^{m}$. Then  $T_g(v)^{\star}$ is an open dense subset of $T_g$. Let $t\in T_g(v)^{\star}$: by~\Ref{prp}{casochiave} Equality~\eqref{iamme} holds with $X=X_t$, $H=H_t$ and $v=v_t$.  By~\Ref{clm}{specializzo} - see the proof of~\Ref{prp}{picari} - it follows  that Equation~\eqref{iamme} holds with $X=X_t$, $H=H_t$ and $v=v_t$ and   arbitrary $t\in T_g(v)$. 
\qed
\section{Odds and ends}\label{sec:fantasia}
\setcounter{equation}{0}
\n
{\bf Simple versus stable sheaves.} Let  $F$ a simple sheaf on a  $K3$ surface $X$. Then 
\begin{equation*}
2-\dim\Ext^1(F,F)=-v(F)^2
\end{equation*}
and hence we may define $d(F)\in\NN$ by the formula
\begin{equation*}
2+v(F)^2=2d(F).
\end{equation*}
Below is a  natural question to ask:
\begin{qst}\label{qst:bastasempl}
Keep hypotheses and notation as above. Is it true that $c_2(F)\in S_{d(F)}(X)$ ?
\end{qst}
\n
{\bf The filtration and correspondences.} Let $X$ and $Y$ be $K3$ surfaces. Suppose that $\rho(X)\ge 2$ and $\rho(Y)\ge 2$. Let $\Phi_{\cE}\colon D^b(X)\overset{\sim}{\lra} D^b(Y)$ be a Fourier-Mukai  equivalence and let $\Phi_{\cE}^{A}\colon CH^{\bullet}(X)_{\QQ}\overset{\sim}{\lra} CH^{\bullet}(Y)_{\QQ}$ be the induced  isomorphism of additive groups (warning: the grading need not be respected). Huybrechts~\cite{huybrechts} proved that $\Phi_{\cE}^{A}$ maps the Beauville-Voisin ring of $X$ to  the Beauville-Voisin ring of $Y$. It is natural to ask  (no condition on $X$ and $Y$) the following question: which correspondence $\Gamma\in CH^2(X\times Y)$ respect the filtrations $S_{\bullet}(X)$ and  $S_{\bullet}(Y)$ ? 
Suppose that $f\colon X\dra Y$ is a rational map and  $\Gamma\in CH^2(X\times Y)$ is the graph of $f$: then $\Gamma$ maps $S_g(X)$ into  $S_g(Y)$.
\begin{expl}\label{expl:isoquo}
 Let $Y$ be a $K3$ surface with a symplectic  automorphism $f$. Let  $W:=Y/\la f\ra$ and $\pi\colon Y\to W$ be the quotient map. The  minimal desingularization of  $W$ is  a $K3$ surface $X$. Let $Z\subset X\times Y$ be the inverse image of 
$\{(\pi(p),p) \mid p\in Y\}$ via the map $X\times Y\to W\times Y$. 
Let $\Gamma:=[Z]\in CH^2(X\times Y)$. 
\end{expl}
Let $X$ and $Y$ be as in~\Ref{expl}{isoquo}.  Suppose that $f$ has order $2$. Then the couple $(X,Y)$ belongs to one of an infinite series of families which have been classified. The methods of Huybrechts and Kemeny~\cite{huy-kem} give that for many such families  $\Gamma_{*}(S_g(X))\subset S_g(Y)$ for all $g$. 
\vskip 2mm
\n
{\bf A filtration defined by Voisin.} Let $V$ be a smooth complex projective variety. In~\cite{voisin2} Voisin introduces the product 
\begin{equation*}
\begin{matrix}
CH_0(V)\times CH_0(V) & \lra & CH_0(V^2) \\
(\sum_i m_i p_i,\sum_j n_j q_j) & \mapsto & \sum_{i,j} m_i n_j (p_i,q_j)
\end{matrix}
\end{equation*}
One denotes the product of $Z_1$ and $Z_2$ by $Z_1 * Z_2$. Iterating we get $Z_1 * Z_2 *\ldots * Z_n\in CH_0(V^n)$. 
Voisin proved~\cite{voisin2}  that if $V$ is a curve of genus $g$ and $\deg Z=0$ then $Z^{* (g+1)}=0$. Now let $X$ be a $K3$ surface and let $S_g(X)_0\subset S_g(X)$ be the subset of degree-$0$ cycles. Let $Z\in S_g(X)_0$: then $Z$ is represented by a degree-$0$ cycle supported on a curve of geometric genus $g$ and hence $Z^{* (g+1)}=0$. Thus
\begin{equation}
S_g(X)_0 \subset \{Z\in CH_0(X) \mid \deg Z=0,\quad Z^{* (g+1)}=0\}. 
\end{equation}

\vskip 2mm
\n
{\bf Generalized Franchetta conjecture.} 
Let $g\ge 3$. Let $\gF_g$ be the moduli space of $K3$ surfaces with a polarization of degree $(2g-2)$. Let $\gF^0_g\subset \gF_g$ be the open dense subset parametrizing polarized $K3$ surfaces  with trivial automorphism group (of the polarized $K3$). There is a tautological family of $K3$ surfaces $\rho\colon\cX_g\to \gF^0_g$. The following question is  quite natural:
\begin{qst}\label{qst:franchettabis}
Let $\cZ\in CH^2(\cX_g)$. 
 Let $t\in \gF_g^0$ and  $X_t:=\rho^{-1}(t)$,  $Z_t=\cZ|_{X_t}$.
Is it true that $Z_t\in\ZZ c_{X_t}$ ?
\end{qst}
The statement of the above question is similar to Franchetta's conjecture on rationally defined line-bundles on the tautological family of curves on $\gM_g$ - now a Theorem, 
see~\cite{arbcor,mestrano}.  Franchetta's conjecture may be proved for very low values of $g$ by a simple direct argument. The proof may be adapted in order to  give an affirmative answer to~\Ref{qst}{franchettabis} for those values of $g$ such that the generic $K3$ surface of genus $g$ is a complete intersection in projective space i.e.~$g=3,4,5$ (I thank Daniel Huybrechts for bringing that to my attention). The  link between~\Ref{qst}{franchettabis}  and~\Ref{cnj}{solodim} is the following: if the answer to~\Ref{qst}{franchettabis} is affirmative then  $c_2(F)\in S_0(X)$ for any spherical vector-bundle $F$ on a $K3$ surface.

\end{document}